\documentclass[11pt,a4paper]{article}

\usepackage{amsfonts,amsgen,amstext,amsbsy,amsopn,amssymb,amscd}
\usepackage[leqno]{amsmath}
\usepackage[amsmath,amsthm,thmmarks]{ntheorem}
\usepackage{mathrsfs}
\usepackage{graphicx}

\newtheorem{thm}{Theorem}[section]
\newtheorem{prop}[thm]{Proposition}

\newtheorem{lem}[thm]{Lemma}
\newtheorem{example}[thm]{Example}
\newtheorem{cor}[thm]{Corollary}
\newtheorem{fact}[thm]{Fact}
\newtheorem{conj}[thm]{Conjecture}

\theoremstyle{definition}
\newtheorem{defn}[thm]{Definition}
\newtheorem{claim}[thm]{Claim}

\makeatletter
\@addtoreset{equation}{section}
\makeatother

\def\hu{\mathcal{U}}

\def\hl{\mathcal{L}}
\def\hht{\mathcal{T}}

\def\hf{\mathcal{F}}

\def\hk{\mathcal{K}}
\def\ha{\mathcal{A}}
\def\hb{\mathcal{B}}
\def\hd{\mathcal{D}}
\def\hs{\mathcal{S}}
\def\hr{\mathcal{R}}
\def\hc{\mathcal{C}}

\title{\bf\Large Stability for
 Helly-type and triangle-free families}
\author{Peter Frankl$^1$, Jian Wang$^2$\\[10pt]
$^{1}$R\'{e}nyi Institute, Budapest, Hungary\\[6pt]
$^{2}$School of Mathematics, Sichuan University, Chengdu, 610065, China.\\[6pt]
E-mail:  $^1$frankl.peter@renyi.hu, $^2$wangjianmath01@scu.edu.cn
}
\date{}

\begin{document}

\maketitle

\begin{abstract}
We consider $k$-graphs, $\hf\subset \binom{[n]}{k}$, $k\geq 3$. A $k$-graph is called intersecting if any two of its edges have non-empty intersection. It is called a star if all its edges share a common vertex. The $k$-graph $\hf$ is called Helly if all its intersecting subfamilies  are stars. If the same is required only for subfamilies consisting of three edges, it is called triangle-free. It is well known that for $n\geq 3k/2$, the full star is the unique largest triangle-free family whence the largest Helly family as well. In 1984 Tuza proved the best possible bound $|\hf|\leq \binom{n-k-1}{k-1}+\binom{n-2}{k-2}+1$ for Helly families that are not stars, albeit only for some unspecified $n>n_0(k)$.

The aim of this paper is twofold. First we establish the same bound for $n>2k$. Second we show that for $n>12k^2$ the same upper bound holds for triangle-free families. It is shown as well that it is not true for $2k<n\leq 3k-4$.
\end{abstract}

\section{Introduction}

Let $[n]=\{1,2,\ldots,n\}$ be the standard $n$-set and let $\binom{[n]}k$ denote the collection of all $k$-subsets of $[n]$. A subset $\hf$ of $\binom{[n]}{k}$ is called a {\it $k$-uniform} family or simply a {\it $k$-graph}.

A $k$-graph $\hf$ is called {\it intersecting} if $F\cap F'\neq \emptyset$ for all $F,F'\in \hf$. Let us state the classical Erd\H{o}s-Ko-Rado Theorem, one of the cornerstones of extremal set theory.

\begin{thm}[\cite{EKR}]\label{thm-ekr}
Let $n\geq 2k$ and suppose that $\hf\subset \binom{[n]}{k}$ is intersecting. Then
\begin{align}\label{ineq-ekr}
|\hf| \leq \binom{n-1}{k-1}.
\end{align}
\end{thm}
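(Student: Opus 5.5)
We prove \eqref{ineq-ekr} with Katona's cyclic permutation argument, a double count over cyclic orderings of $[n]$. Fix a cyclic permutation $\sigma$ of $[n]$, i.e.\ an arrangement of $1,\ldots,n$ around a circle. Call a $k$-set an \emph{arc} of $\sigma$ if its elements are consecutive along the circle. There are exactly $n$ arcs for each $\sigma$. The plan has three steps: bound the number of members of $\hf$ that are arcs of a fixed $\sigma$, count how often a fixed $k$-set is an arc, and double count the incidences.

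\textbf{Step 1 (key lemma).} If $n\geq 2k$ and $\hf$ is intersecting, then at most $k$ arcs of $\sigma$ lie in $\hf$.

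Suppose some arc $A=\{a_1,\ldots,a_k\}\in\hf$, listed in circular order. Every other arc meeting $A$ has the form $\{\ldots,a_i\}$ or $\{a_{i+1},\ldots\}$ for some $1\le i\le k-1$. Precisely, it either ends at $a_i$ or starts at $a_{i+1}$, since it is a cut across the boundary between $a_i$ and $a_{i+1}$. This gives $2(k-1)$ candidate arcs, grouped into $k-1$ pairs: $L_i$ ending at $a_i$ and $R_i$ starting at $a_{i+1}$.

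Since $n\geq 2k$, the arcs $L_i$ and $R_i$ are disjoint. Indeed, together they occupy $2k$ consecutive positions without overlap, and $2k\le n$ means they cannot wrap around and meet. Hence $\hf$ contains at most one arc from each pair. Adding $A$ itself gives at most $1+(k-1)=k$ arcs. This is the only place where the hypothesis $n\ge 2k$ is used, and it is the only nontrivial step. The care needed is in checking that the pairing covers all arcs meeting $A$ and that each pair really consists of disjoint sets.

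\textbf{Step 2 (counting).} A fixed $k$-set $F$ is an arc of exactly $k!(n-k)!$ cyclic permutations. To see this, treat $F$ as a block: there are $(n-k)!$ cyclic arrangements of the block together with the other $n-k$ elements, and $k!$ internal orders of the block.

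\textbf{Step 3 (double counting).} Count the pairs $(\sigma,F)$ with $F\in\hf$ an arc of $\sigma$. By Step 2 this number is $|\hf|\,k!(n-k)!$. By Step 1 it is at most $k\cdot(n-1)!$, since there are $(n-1)!$ cyclic permutations. Therefore
\[
|\hf|\le \frac{k\,(n-1)!}{k!\,(n-k)!}=\binom{n-1}{k-1}.
\]
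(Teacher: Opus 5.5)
The paper gives no proof of this theorem: it is quoted from \cite{EKR} and used as a black box in Lemma~\ref{lem-5.1}, Lemma~\ref{lem-5.3} and Corollary~\ref{cor-6.1}.

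Your argument is Katona's cyclic-permutation proof, and it is correct and complete. In Step~1, for $n\ge 2k$, an arc $B\neq A$ that meets $A$ does so either in a proper initial segment $\{a_1,\dots,a_i\}$, so $B=L_i$, or in a proper final segment $\{a_{i+1},\dots,a_k\}$, so $B=R_i$. It cannot overlap both ends of $A$, since then $A\cup B$ would be the whole circle and $n\le 2k-2$. Moreover $L_i\cap R_i=\emptyset$, because together they fill $2k\le n$ consecutive positions. Steps~2 and~3 are the standard block count and double count, giving $|\hf|\,k!\,(n-k)!\le k\,(n-1)!$.

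The original proof in \cite{EKR} goes through shifting and induction on $n$. Your route is shorter and self-contained, and it isolates exactly where $n\ge 2k$ is needed: the disjointness of $L_i$ and $R_i$.

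What the bare double count does not give is a characterization of the extremal families. In the proof of Corollary~\ref{cor-6.1} the paper uses Theorem~\ref{thm-ekr} together with its equality case: only full stars attain the bound once the ground set is larger than twice the uniformity. That lies outside the statement you proved. Getting it from Katona's method would need an extra analysis of the cyclic orders in which exactly $k$ arcs belong to $\hf$.
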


No doubt motivated
by \eqref{ineq-ekr} in 1971 Erd\H{o}s \cite{E} stated the following analogous problem.

First a definition. Three sets $F_1,F_2,F_3$ are said to form a {\it triangle} if $F_i\cap F_j\neq \emptyset$ for $1\leq i<j\leq 3$ but $F_1\cap F_2\cap F_3=\emptyset$.

A $k$-graph is called {\it triangle-free} if it contains no three edges forming a triangle.

\begin{conj}[Erd\H{o}s \cite{E}]
Let $n\geq \frac{3}{2}k$, $k\geq 3$. If $\hf\subset \binom{[n]}{k}$ is triangle-free then
\begin{align}\label{ineq-triangle-free}
|\hf| \leq \binom{n-1}{k-1}.
\end{align}
\end{conj}

 Note that for $2k>n\geq \frac{3}{2}k$, $\binom{[n]}{k}$ is intersecting. Thus $\hf\subset \binom{[n]}{k}$ is triangle-free if and only if $F_1\cap F_2\cap F_3\neq \emptyset$ for all $F_1,F_2,F_3\in \hf$. Under this assumption \eqref{ineq-ekr} was proved in \cite{F76}.

 Later the first author proved Erd\H{o}s' conjecture for $n\geq n_0(k)$ as well. Eventually Mubayi and Verstra\"{e}te \cite{MV} extended it to the full range.

  \begin{thm}[\cite{F76}, \cite{F81}, \cite{MV}]\label{thm-F76}
 Suppose that $n\geq \frac{3}{2}k$, $k\geq 3$ and $\hf\subset \binom{[n]}{k}$ is triangle-free. Then \eqref{ineq-triangle-free} holds.
 \end{thm}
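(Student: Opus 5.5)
The statement gathers three results, and I would prove it in three regimes of $n$.

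\emph{Regime $\frac32 k\le n<2k$.} Any two $k$-subsets of $[n]$ meet, so a triangle-free $\hf$ is exactly a $3$-wise intersecting family: $F_1\cap F_2\cap F_3\neq\emptyset$ for all $F_1,F_2,F_3\in\hf$. Here I would use the cyclic-permutation (Katona) averaging, in the form from \cite{F76}. Fix a cyclic ordering of $[n]$ and look at the members of $\hf$ that are arcs of length $k$. The key claim is that a $3$-wise intersecting family of $k$-arcs on an $n$-cycle with $n\ge \frac32 k$ has at most $k$ members. To prove it, take an arc $A$ of the family; every other arc of the family meets $A$, so it begins or ends inside $A$. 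Then pair up arcs whose union with $A$ would cover the cycle, which forces an empty triple intersection; this is where $3k\le 2n$ enters. Averaging over all $(n-1)!$ cyclic orders then gives $|\hf|\le \frac kn\binom nk=\binom{n-1}{k-1}$.

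\emph{Regime $n\ge 2k$.} Averaging over arcs breaks down here, because pairwise disjoint arcs are allowed. I would follow the strategy of \cite{MV} and prove a structural lemma: for every $F\in\hf$ there is a "witness" $(k-1)$-set $S_F\subset F$ whose configuration inside $\hf$ is controlled by triangle-freeness. Concretely, I would study the link of each vertex $x$, namely $\hf(x)=\{F\setminus\{x\}: x\in F\in\hf\}$. If $F,G\in\hf$ with $F\cap G=\{x\}$ and $H\in\hf$ meets both $F$ and $G$, then $x\in H$. This restricts which pairs of sets can meet in only one point.

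Using this, I would build an injection, or a weighted LYM-type count, from $\hf$ into the $(k-1)$-subsets of $[n-1]$, in the spirit of the shifting-free proof of Erd\H{o}s--Ko--Rado. I would do this by induction on $n$, deleting a vertex $x$ of maximum degree. One then has to check that $\hf\setminus\hf(x)$ on $[n]\setminus\{x\}$ is still triangle-free; that is automatic. One also has to check that the surplus of $\hf(x)$ over $\binom{n-2}{k-2}$ is paid for by missing sets, and this last point is the real content.

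\emph{Regime $n\ge n_0(k)$.} Here the Frankl argument of \cite{F81} is cleaner and I would present it separately. Apply the delta-system method to obtain a kernel family $\hk$ of sets of size less than $k$, with $|\hf|\le\sum_{K\in\hk}\binom{n}{k-|K|}$ up to lower-order terms. Triangle-freeness forces the kernels of size $1$ to be a single vertex and the remaining kernels to be small in number, which gives $|\hf|\le\binom{n-1}{k-1}$ for large $n$.

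The main obstacle is the intermediate range $2k\le n<n_0(k)$, where the counting of the second regime has to work exactly, with no slack. I would first try to prove that if no vertex has degree at least $|\hf|\cdot\frac kn$-ish, then two disjoint members together with a third member meeting both (a triangle) must exist. That is, I would aim for a supersaturation-type counting of triangles through pairs of sets at distance $1$, and fall back on the \cite{MV} lemma if this fails.
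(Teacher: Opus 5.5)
\textbf{Genuine gap.} The paper does not prove this theorem. It quotes it from \cite{F76} ($\frac32k\le n<2k$), \cite{F81} ($n>n_0(k)$) and \cite{MV} (the full range), and uses it as a black box, e.g.\ in Fact~\ref{fact-6.2} on ground sets of size $O(k^2)$. Your three regimes mirror this citation, but as a proof the proposal breaks down at $n\ge 2k$, which is where the difficulty lies. You say yourself that \emph{the real content} is missing there, and the induction as set up cannot close. From $|\hf|=|\hf(x)|+|\hf(\bar x)|$ and the hypothesis $|\hf(\bar x)|\le\binom{n-2}{k-1}$ you need $|\hf(x)|\le\binom{n-2}{k-2}$. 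But a maximum-degree vertex has $|\hf(x)|\ge\frac kn|\hf|$, which for $|\hf|>\binom{n-1}{k-1}$ is larger than $\frac{k(n-1)}{n(k-1)}\binom{n-2}{k-2}>\binom{n-2}{k-2}$. A bare upper bound on $\hf(\bar x)$ cannot absorb that surplus. Averaging does not produce a vertex of degree at most $\binom{n-2}{k-2}$ either. You would need a structural lemma that produces such a vertex under $|\hf|>\binom{n-1}{k-1}$; this is what Lemma~\ref{lem-key1} achieves, with considerable effort, in the Helly setting. The Bollob\'as--Duchet witness is also unavailable: in the triangle-free family $\ha(n,k)$ no edge has a unique $(k-1)$-subset. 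That is why \cite{MV} work with the graphs $G_x$ and Lemma~\ref{lem-MV}. Your fallback is vacuous, since by averaging some vertex always has degree at least $\frac kn|\hf|$. Finally, the delta-system outline only gives $|\hf|\le\binom{n-1}{k-1}+O(n^{k-2})$. Exactness requires a separate argument that the members avoiding the center cost more than they contribute; compare the end of the proof of Theorem~\ref{thm-main1}.

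The first regime also fails as sketched. The claim itself is true: a $3$-wise intersecting family of $k$-arcs on the $n$-cycle with $2n\ge 3k$ has at most $k$ members. But it does not follow from fixing one arc $A$ and pairing arcs $B,C$ with $A\cap B\cap C=\emptyset$ (the relevant condition is that the complements of $B$ and $C$ cover $A$). For $n<2k$, every such pair consists of one arc whose complement starts in a fixed window of $2n-3k+1$ positions and one whose complement starts in a second, disjoint window of the same size. So triples through $A$ bound the number of arcs only by $3k-n-1$, which exceeds $k$ when $n\le 2k-2$. For instance, take $k=4$, $n=6$, $A=\{1,2,3,4\}$. The only bad pair is $\{3,4,5,6\},\{5,6,1,2\}$, which gives the bound $5$. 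The true bound $4$ needs the triple $\{2,3,4,5\},\{4,5,6,1\},\{6,1,2,3\}$, which avoids $A$. A correct argument must use triples not through a fixed arc. One way: let $U$ be the set of starting points of the complementary arcs (of length $\ell=n-k$, with $3\ell\ge n$), and assume $|U|\ge n-\ell+1$. Iterate the map sending $u\in U$ to the farthest point of $U$ within distance $\ell$ ahead. Along a periodic orbit the total loss is at most $n-|U|\le\ell-1$. This forces three complementary arcs that cover the cycle, i.e.\ three arcs with empty intersection.
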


 Bollob\'{a}s and Duchet \cite{BD} proved \eqref{ineq-triangle-free} under a considerably stronger restriction.

 \begin{defn}
 A $k$-graph $\hf$ is said to be of {\it Helly-type} if for all intersecting subfamilies $\hf_0\subset \hf$ the {\it total intersection} $\cap_{F\in \hf_0} F$ is non-empty.
 \end{defn}

 \begin{thm}[\cite{BD}]
 Let $n\geq 2k\geq 6$. Suppose that $\hf\subset \binom{[n]}{k}$ is of Helly-type. Then
 \begin{align}\label{ineq-1.3}
 |\hf| \leq \binom{n-1}{k-1}.
 \end{align}
 \end{thm}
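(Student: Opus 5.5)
The plan is to obtain the statement as an immediate consequence of Theorem~\ref{thm-F76}. The key observation is that a family of Helly-type is automatically triangle-free.

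Suppose $\hf$ is of Helly-type and $F_1,F_2,F_3\in\hf$ satisfy $F_i\cap F_j\neq\emptyset$ for $1\le i<j\le 3$. Then $\hf_0=\{F_1,F_2,F_3\}$ is an intersecting subfamily, so by definition $F_1\cap F_2\cap F_3\neq\emptyset$. The degenerate cases where two of the $F_i$ coincide need no extra care: the triple intersection is then a pairwise intersection, which is nonempty. Hence $\hf$ contains no triangle.

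Since $n\ge 2k\ge 6$ gives $k\ge 3$ and $n\ge 2k\ge \frac32 k$, the hypotheses of Theorem~\ref{thm-F76} hold. It yields $|\hf|\le\binom{n-1}{k-1}$, which is \eqref{ineq-1.3}.

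There is no real obstacle, since the heavy lifting (Mubayi--Verstra\"ete for the full range) is in Theorem~\ref{thm-F76}. If one wanted a self-contained argument in the spirit of Bollob\'as--Duchet instead, I would use a Katona-type cyclic permutation argument. One shows that on a cycle of length $n\ge 2k$, a Helly family of arcs of length $k$ can contain at most $k$ arcs. The reason is that pairwise intersecting arcs on a cycle with a common point form a star of arcs, and one checks that at most $k$ arcs can pass through that point. One would then average over all cyclic orders. The delicate step there would be handling arcs on the cycle that pairwise intersect without a common point, which is exactly what the Helly condition excludes.
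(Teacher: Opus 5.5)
Your argument is correct. A triangle $\{F_1,F_2,F_3\}\subset\mathcal{F}$ is itself an intersecting subfamily with empty total intersection, so Helly-type implies triangle-free. The condition $n\ge 2k\ge 6$ puts you inside the range of Theorem~\ref{thm-F76}, and within this paper, where that theorem is a cited result, the deduction is complete.

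It is, however, a different route from the one the paper attaches to this statement. The paper does not reprove it. It cites Bollob\'as--Duchet and says their proof rests on an elementary observation: for $k\ge 3$, every edge $F$ of a Helly-type $k$-graph has a unique $(k-1)$-subset. Otherwise $F$ together with edges $F_i\supset F\setminus\{y_i\}$, $F_i\neq F$, would form an intersecting subfamily with empty intersection. That argument uses the Helly condition on $(k+1)$-element subfamilies and stays elementary. Your route uses only the three-set case of the Helly condition, and in exchange relies on the much deeper triangle-free theorem of Frankl and Mubayi--Verstra\"ete, which came much later. The paper itself endorses your implication (``the full star is the unique largest triangle-free family whence the largest Helly family as well''). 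So your proof is a legitimate one-line consequence, but it is not a substitute for the self-contained Bollob\'as--Duchet argument.

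One caution about your optional Katona-type sketch: the per-cycle bound it needs is false. On the cycle $1,2,\ldots,2k$ put $A_i=\{i,\ldots,i+k-1\}$. The family $\{A_1,\ldots,A_{k+1}\}$ has $k+1$ arcs and is Helly. Its only disjoint pair is $A_1,A_{k+1}$, so every intersecting subfamily lies either in $\{A_1,\ldots,A_k\}$, all of which contain $k$, or in $\{A_2,\ldots,A_{k+1}\}$, all of which contain $k+1$. For larger $n$ you can place several point-disjoint stars of arcs on one cycle and get far more than $k$ arcs. The flaw is that a Helly family of arcs need not be intersecting, so it does not reduce to a single star of arcs through one point. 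This is exactly why neither the Helly nor the triangle-free problem yields to the cyclic averaging that proves Erd\H{o}s--Ko--Rado.
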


 The proof of Bollob\'{a}s and Duchet is based on a simple observation. For a $k$-graph $\hf$, an edge $F\in \hf$ and a $(k-1)$-subset $G\subset F$, we say that $G$ is {\it unique} if $G\not\subset F'$ holds for all $F\neq F'\in \hf$.

 \begin{fact}
 Suppose that $k\geq 3$, $\hf\subset \binom{[n]}{k}$ is of Helly-type. Then every edge of $\hf$ has at least one unique $(k-1)$-subset.
 \end{fact}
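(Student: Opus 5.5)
We argue by contradiction. Fix $F\in\hf$ and suppose none of its $k$ subsets of size $k-1$ is unique. Write $F=\{x_1,\ldots,x_k\}$. Then for each $i\in[k]$ the set $F\setminus\{x_i\}$ lies in some edge $F_i\in\hf$ with $F_i\neq F$. Since $|F_i|=k$ and $F_i\neq F$, we must have $F_i=(F\setminus\{x_i\})\cup\{y_i\}$ for some $y_i\notin F$. The plan is to show that the subfamily $\hf_0=\{F_1,\ldots,F_k\}$ is intersecting but has empty total intersection. This contradicts the Helly property.

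\emph{The $F_i$ are distinct.} For $i\neq j$ we have $x_j\in F_i$ but $x_j\notin F_j$, because $y_j\notin F$ and so $y_j\neq x_j$. Hence $F_i\neq F_j$. Note also that we may take $F$ itself to be in $\hf_0$ or not; it makes no difference, so we simply use $\{F_1,\ldots,F_k\}$.

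\emph{$\hf_0$ is intersecting.} For $i\neq j$ we have
\[
F_i\cap F_j\supseteq F\setminus\{x_i,x_j\},
\]
and this set has $k-2\geq 1$ elements because $k\geq 3$. This is exactly where the hypothesis $k\geq 3$ is used.

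\emph{The total intersection is empty.} Suppose $z\in\bigcap_{i=1}^k F_i$.
\begin{itemize}
\item If $z\in F$, then $z=x_j$ for some $j$. But $x_j\notin F_j$, a contradiction.
\item If $z\notin F$, then $z=y_i$ for every $i$, since $F_i\setminus F=\{y_i\}$. So all the $y_i$ coincide, say $y_i=y$ for all $i$. In that case every $F_i$ equals $(F\setminus\{x_i\})\cup\{y\}$, and each element of $F$ is still missed by one of the $F_i$. Since $y\notin F$, the total intersection of the $F_i$ is $\{y\}$, which is non-empty.
\end{itemize}
So the naive argument breaks in exactly this case, and it needs separate treatment.

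\emph{The case of a common $y$.} This is the main obstacle. Add $F$ back into the subfamily and consider $\hf_0=\{F,F_1,\ldots,F_k\}$. Since $F$ meets every $F_i$ in $k-1\geq 1$ points, $\hf_0$ is still intersecting. Its total intersection is contained in
\[
F\cap\bigcap_{i=1}^k F_i=\emptyset,
\]
because $y\notin F$ and each $x_j$ is missed by $F_j$.

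\emph{A uniform argument.} In fact it is cleaner to always use $\hf_0=\{F,F_1,\ldots,F_k\}$, whether or not the $y_i$ coincide. Any point of the total intersection lies in $F$, hence is some $x_j$, and $x_j\notin F_j$. So the total intersection is empty in every case, while pairwise intersections have size at least $k-2\geq 1$ (indeed $k-1$ for pairs involving $F$). This is the argument I would write, and it contradicts the Helly property.
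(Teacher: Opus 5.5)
Your final uniform argument is exactly the paper's proof: $\{F,F_1,\ldots,F_k\}$ is intersecting because $k\ge 3$, while its total intersection is empty since every point of it lies in $F$ and each $x_j$ is missed by $F_j$. The earlier attempt without $F$ and the check that the $F_i$ are distinct are unnecessary detours, but the argument you settle on is correct and complete.
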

 \begin{proof}
 Suppose the contrary and fix $F\in \hf$ such that for each of its $(k-1)$-sets, $G_1,G_2,\ldots,G_k$, there exits $F_i\in \hf$, $F_i\neq F$ with $G_i\subset F_i$. Note that $k\geq 3$ implies that $\{F,F_1,\ldots, F_k\}$ is intersecting. However $F\cap F_1\cap \ldots\cap F_k=\emptyset$, a contradiction.
 \end{proof}

 For the Erd\H{o}s-Ko-Rado Theorem, nearly 60 years ago Hilton and Milner established a strong stability result. To state it we need a definition. A family $\hf\subset \binom{[n]}{k}$ is called {\it non-trivial} if $\cap \hf=\emptyset$.

 \begin{thm}[\cite{HM67}]\label{thm-HM}
 Let $n> 2k$.  Suppose that $\hf\subset \binom{[n]}{k}$ is intersecting and non-trivial. Then
\begin{align}\label{ineq-nontrival}
|\hf| \leq \binom{n-1}{k-1}- \binom{n-k-1}{k-1} +1.
\end{align}
\end{thm}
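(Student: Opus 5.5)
The plan is to prove the Hilton--Milner bound by the standard shifting argument, reducing to a shifted, non-trivial intersecting family and then splitting according to whether the element $1$ is contained in the sets.

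\emph{Shifting.} First I would apply the shifting operations $S_{ij}$ ($i<j$), which preserve both $|\hf|$ and the intersecting property. Non-triviality may be destroyed by a shift. The way around this is the usual one: if $S_{ij}(\hf)$ is trivial, then every set of $S_{ij}(\hf)$ contains $i$. In that case every $F\in\hf$ meets $\{i,j\}$. It then suffices to treat families $\hf$ with a two-element transversal $\{i,j\}$ separately, and this can be done by direct counting. Assume $\hf$ is non-trivial and every set meets $\{1,2\}$, and let
\[
\ha=\{F\in\hf:1\in F,\ 2\notin F\},\qquad \hb=\{F\in\hf:2\in F,\ 1\notin F\}.
\]
Both $\ha$ and $\hb$ are nonempty. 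The families $\{F\setminus\{1\}:F\in\ha\}$ and $\{F\setminus\{2\}:F\in\hb\}$ are cross-intersecting $(k-1)$-uniform families on $n-2$ points. The standard bound for nonempty cross-intersecting families (with $n-2\ge 2(k-1)$) gives $|\ha|+|\hb|\le \binom{n-2}{k-1}-\binom{n-k-1}{k-1}+1$. Adding the at most $\binom{n-2}{k-2}$ sets that contain both $1$ and $2$ yields exactly the bound \eqref{ineq-nontrival}.

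\emph{The shifted case.} Otherwise we may assume $\hf$ is shifted and non-trivial. Split $\hf$ into
\[
\hf(1)=\{F\setminus\{1\}:1\in F\in\hf\},\qquad \hf(\bar 1)=\{F\in\hf:1\notin F\}\neq\emptyset.
\]
Every member of $\hf(1)$ must meet every member of $\hf(\bar1)$. Since $\hf(\bar1)$ is nonempty and shifted, it contains $[2,k+1]$. Every $G\in\hf(1)$ therefore meets $[2,k+1]$, which already gives $|\hf(1)|\le\binom{n-1}{k-1}-\binom{n-k-1}{k-1}$. The remaining task is to show $|\hf(\bar1)|\le 1$ after accounting for the loss, or more precisely to prove
\[
|\hf(1)|+|\hf(\bar1)|\le \binom{n-1}{k-1}-\binom{n-k-1}{k-1}+1.
\]
I would do this by a cross-intersecting inequality: $\hf(1)\subset\binom{[2,n]}{k-1}$ and $\hf(\bar1)\subset\binom{[2,n]}{k}$ are cross-intersecting and $\hf(\bar1)\ne\emptyset$. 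The claim is that for $n-1\ge 2k-1$ we have $|\hf(1)|+|\hf(\bar1)|\le\binom{n-1}{k-1}-\binom{n-k-1}{k-1}+1$. I would prove this claim by induction on $n$ using the shifted structure, splitting on the element $n$, or alternatively by the Kruskal--Katona-type argument. Each additional set of $\hf(\bar1)$ forbids at least as many sets from $\hf(1)$ as it adds, and this uses $n>2k$.

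\emph{Main obstacle.} The main obstacle is this last cross-intersecting inequality, in particular showing that enlarging $\hf(\bar1)$ beyond one set costs at least one set of $\hf(1)$ per added set. My first attempt would be a Kruskal--Katona/shadow comparison: the $(k-1)$-sets in $[2,n]$ disjoint from some member of $\hf(\bar1)$ form the complement-shadow of the family $\{[2,n]\setminus H: H\in\hf(\bar1)\}$, which is $(n-1-k)$-uniform. Since $n-1-k\ge k$ when $n>2k$, the lower shadow into level $k-1$ grows at least as fast as the family itself. If that fails, I would fall back on Frankl's short inductive proof, deleting $n$ and using that both traces remain cross-intersecting.
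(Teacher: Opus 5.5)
The paper gives no proof of this statement: it is quoted from Hilton and Milner \cite{HM67} and used as a black box (in the proof of Lemma~\ref{lem-5.3}), so your proposal has to stand on its own. Several parts of it are fine. If a shift $S_{ij}$ destroys non-triviality, then $\{i,j\}$ is a transversal of the current family. In that case the bound for non-empty cross-intersecting $(k-1)$-uniform families on $n-2$ points, plus $\binom{n-2}{k-2}$, gives exactly \eqref{ineq-nontrival}. In the shifted case you also correctly get $[2,k+1]\in\hf(\bar 1)$, and hence $|\hf(1)|\le\binom{n-1}{k-1}-\binom{n-k-1}{k-1}$.

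The gap is the remaining step, which carries the whole content of the theorem. The cross-intersecting inequality you state is false. Your hypotheses are: $\hf(1)\subset\binom{[2,n]}{k-1}$ and $\hf(\bar1)\subset\binom{[2,n]}{k}$ are cross-intersecting and $\hf(\bar1)\neq\emptyset$. Two counterexamples:
\begin{itemize}
\item Take $\hf(1)=\emptyset$ and $\hf(\bar1)=\binom{[2,n]}{k}$.
\item Keeping $\hf(\bar 1)$ intersecting, let both families consist of all sets containing $2$. This gives $\binom{n-2}{k-2}+\binom{n-2}{k-1}=\binom{n-1}{k-1}$.
\end{itemize}
Your Kruskal--Katona heuristic fails for the same reason. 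The full level $\binom{[2,n]}{n-1-k}$ has $\binom{n-1}{k}$ members, but its $(k-1)$-shadow has only $\binom{n-1}{k-1}<\binom{n-1}{k}$ members when $n>2k$. For the complements of the star $\{H\ni 2\}$ the shadow has exactly as many members as the family, so it falls short of what you need by $\binom{n-k-1}{k-1}-1$.

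So ``each added set of $\hf(\bar1)$ forbids at least one new set of $\hf(1)$'' does not follow from $n>2k$ and cross-intersection alone. It needs structure you never invoke. First, $\hf(\bar1)$ is intersecting. Second, by shiftedness, $F\setminus\{j\}\in\hf(1)$ for every $F\in\hf(\bar1)$ and $j\in F$. Hence any two members of $\hf(\bar1)$ share at least two elements, which rules out the star example. Even with these hypotheses you still have to construct an injection from $\hf(\bar1)\setminus\{[2,k+1]\}$ into the $(k-1)$-sets that meet $[2,k+1]$ and are absent from $\hf(1)$. For $F\subset[2,2k]$ the map $F\mapsto[2,2k]\setminus F$ works. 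Sets not contained in $[2,2k]$ need a genuine argument in which shiftedness is actually used. As written, the shifted case rests on an unproved lemma that is false in the generality you state it.
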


The aim of the present paper is to establish stability results for both Helly-type and triangle-free $k$-graphs.

\begin{example}[\cite{Tuza94}]
Let $T_0=\binom{[n]}{k}$ satisy $T_0\cap \{1,2\}=\{2\}$. Define $\hht(n,k):= \{T_0\}\cup \hht_1\cup \hht_2$, where
\[
\hht_1=\left\{T\in \binom{[n]}{k}\colon 1\in T,T\cap T_0=\emptyset\right\},\ \hht_2=\left\{T\in \binom{[n]}{k}\colon \{1,2\}\subset T\right\}.
\]
\end{example}

It is easy to verify that $\hht(n,k)$ has the Helly property. Namely 1 is common to all edges in $\hht(n,k)\setminus \{T_0\}$ and 2 is common to all edges intersecting $T_0$.

\begin{thm}\label{thm-main}
Suppose that $\hf\subset \binom{[n]}{k}$ is non-trivial and has Helly property, $n\geq 2k\geq 10$. Then
\begin{align}\label{ineq-1.4}
|\hf| \leq |\hht(n,k)| =1+\binom{n-k-1}{k-1}+\binom{n-2}{k-2}.
\end{align}
\end{thm}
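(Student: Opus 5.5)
Let $\hf$ be Helly and non-trivial. For a vertex $x$ write $\hf(x)=\{F\in\hf: x\in F\}$ and $\hf(\bar x)=\hf\setminus\hf(x)$. The plan is to work with a vertex $1$ of maximum degree, so that $\hf(\bar 1)\neq\emptyset$, and to exploit the tension between $\hf(1)$ being large and the Helly property forcing stars on every intersecting subfamily. The first observation is that for any $F\in\hf(\bar 1)$, the subfamily of $\hf(1)$ meeting $F$, together with $F$, is intersecting only if its members meet pairwise. They do, since they share $1$. Hence every intersecting subfamily of $\{F\}\cup\{G\in\hf(1):G\cap F\neq\emptyset\}$ is a star. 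In particular, any two members $G,G'\in \hf(1)$ meeting $F$ must satisfy $G\cap G'\cap F\neq\emptyset$ (triangle-free), and more strongly all of them together with $F$ have a common point $y_F\in F$ (Helly applied to the whole intersecting subfamily). So every $G\in\hf(1)$ either avoids $F$ or contains both $1$ and $y_F$. This is exactly the structure of $\hht(n,k)$.

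Next we fix $F_0\in\hf(\bar 1)$ with $y=y_{F_0}$ and split the count as
\[
|\hf|=|\hf(\bar 1)|+|\{G\in\hf(1):G\cap F_0=\emptyset\}|+|\{G\in\hf(1):\{1,y\}\subset G\}|.
\]
The middle term is at most $\binom{n-k-1}{k-1}$ and the last term is at most $\binom{n-2}{k-2}$. If $|\hf(\bar 1)|=1$ we are done. Otherwise, consider several sets $F_1,\dots,F_m\in\hf(\bar 1)$. Each $G\in\hf(1)$ must, for every $i$, either avoid $F_i$ or contain $y_{F_i}$, and these constraints cut down $\hf(1)$. Roughly, if the $F_i$ yield two distinct points $y_{F_i}\neq y_{F_j}$, or if $F_j\not\subset F_0\cup\{\ldots\}$, then sets through $\{1,y\}$ avoiding $F_j\setminus\{y\}$ lose a term of order $\binom{n-3}{k-3}$ or more. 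The aim is the inequality
\[
|\hf(1)|\le \binom{n-k-1}{k-1}+\binom{n-2}{k-2}+1-|\hf(\bar 1)|,
\]
to be proved by charging each extra member of $\hf(\bar 1)$ to a distinct lost set of $\hf(1)$. To produce the injection, I would use the Bollob\'as--Duchet Fact: each $F\in\hf(\bar 1)$ has a unique $(k-1)$-subset $H_F$. I would then build from $H_F$ a $k$-set $H_F\cup\{1\}$, or a modification of it through $y$, which lies in the "allowed" region for $F_0$ but is forbidden by $F$. Uniqueness would guarantee distinctness. This mirrors the known shifting-free proof of Hilton--Milner.

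The main obstacle is the injection step when $\hf(\bar 1)$ is large and structured. A typical example is all the $F_i$ containing $y$, so that $\hf(\bar 1)$ is itself a star centered at $y$. In that case the roles of $1$ and $y$ become symmetric, and one must instead compare with the degree of $y$. There I would switch to a second decomposition, by $\hf(1)\cap\hf(y)$, $\hf(1)\setminus\hf(y)$ and $\hf(y)\setminus\hf(1)$, and observe that the families $\hf(1)\setminus\hf(y)$ and $\hf(y)\setminus\hf(1)$ are cross-intersecting only in a constrained way: a meeting pair $G,F$ must share a third common point with everything else they meet. I would then apply a Hilton--Milner-type bound for cross-intersecting families, namely $|\ha|+|\hb|\le\binom{n-2}{k-1}-\binom{n-k-1}{k-1}+1$ for non-empty cross-intersecting $\ha,\hb\subset\binom{[n]\setminus\{1,y\}}{k-1}$, adapted to these near-cross-intersecting families. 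The condition $k\ge5$ would be used to make the lower-order losses dominate in these estimates.
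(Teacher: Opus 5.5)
Your opening observation is correct and is exactly the paper's \emph{local center}. For $F\in\hf(\bar 1)$, all members through $1$ that meet $F$ contain a fixed $y_F\in F$. So the members through $1$ lie in $\mathcal{W}=\{G\ni 1\colon G\cap F_0=\emptyset\}\cup\{G\colon \{1,y\}\subset G\}$, which settles the case $|\hf(\bar 1)|=1$. Beyond that, though, the inequality you aim for is just the theorem restated, and the mechanism you offer for it does not work.

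\begin{itemize}
\item The sets $H_F\cup\{1\}$ are distinct and absent from $\hf$. But they count as a loss against $|\hht(n,k)|$ only if they lie in $\mathcal{W}$, i.e.\ if $H_F\cap F_0=\emptyset$ or $y\in H_F$. As soon as some $F\in\hf(\bar 1)$ has $y\notin F$ and $|F\cap F_0|\ge 2$, every $(k-1)$-subset of $F$ meets $F_0$ and misses $y$. The local-center conditions do not forbid this.
\item A ``modification through $y$'' such as $\{1,y\}\cup S$, with $S\subset F\setminus\{y,y_F\}$ and $|S|=k-2$, is indeed absent from $\hf$ when $y_F\neq y$. But it is determined by the $(k-2)$-set $S$, and uniqueness of $(k-1)$-subsets does not stop two members of $\hf(\bar 1)$ from producing the same $S$. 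So injectivity is lost.
\item More fundamentally, nothing in your setup makes $\hf(\bar 1)$ small. For $n$ close to $2k$, a vertex of maximum degree is a priori only guaranteed degree about $\frac{k}{n}|\hf|$, so $\hf(\bar 1)$ may be a constant fraction of $\hf$. Meanwhile each loss you describe is of order $\binom{n-3}{k-3}$, and losses from different $F_j$ overlap.
\item Your fallback for the star case points the wrong way. In $\hht(n,k)$ itself, the members through $1$ but not $2$ form $\hht_1$, and those through $2$ but not $1$ form $\{T_0\}$. After deleting $1$ and $2$ these are cross-\emph{disjoint}. For large $n$ their total size $\binom{n-k-1}{k-1}+1$ exceeds $\binom{n-2}{k-1}-\binom{n-k-1}{k-1}+1$, so no adaptation of the cross-intersecting Hilton--Milner bound can hold there.
\end{itemize}

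What is missing are the three ingredients that make the paper's counting of missing sets close.

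\begin{itemize}
\item \textbf{A vertex of small degree.} By \eqref{ineq-2.1} some $\hl_x$ is large, and each component of $G_x$ is a star by Lemma~\ref{lem-MV} and the Helly property. Lemma~\ref{lem-4.2} then yields $x,c$ with $|\hf(x,\bar c)|\le\binom{t-1}{k-1}$. Lemma~\ref{lem-key1} uses this to show $x$ has degree at most $\binom{n-k-2}{k-2}+\binom{n-3}{k-3}=|\hht(n,k)|-|\hht(n-1,k)|$. The local-center losses of size $\binom{n-3}{k-2}-\binom{n-k-2}{k-2}$ suffice there only because what they must beat, such as $|\hf(x,\bar c)|$ and $|Z|<n$, is already known to be tiny.
\item \textbf{Induction on $n$ by deleting $x$.} If $\hf(\bar x)$ is non-trivial, the induction hypothesis applies. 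If it is a star at $c$, then $\{x,c\}$ is a transversal --- precisely your ``star at $y$'' case. Lemma~\ref{lem-key2} settles it by counting $\hht_{c,B}\setminus\hf(c)$ via disjoint $B\in\hf[x,\bar c]$, $C\in\hf[c,\bar x]$ and Claim~\ref{claim-center-payment}. Crucially, it uses the a priori degree bound on $x$ (in \eqref{eq-explicit-degree} and Claim~\ref{claim-1}).
\item \textbf{The base case $n=2k$.} Here $|\hht(2k,k)|=\binom{2k-2}{k-2}+2$, and the paper relies on Cambie--Salia (Proposition~\ref{prop-main}).
\end{itemize}

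Your proposal has no substitute for any of these, so as written it only proves the case $|\hf(\bar 1)|=1$.
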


Let us mention that the above bound $n\geq 2k$ is best possible. Namely, for $n<2k$, $\hf$ is automatically intersecting. Hence the Helly property implies that it is a star.

\begin{thm}\label{thm-main1}
Suppose that $\hf\subset \binom{[n]}{k}$ is non-trivial and triangle-free, $n\geq 12k^2$ and $k\geq 7$. Then
\begin{align}\label{ineq-1.5}
|\hf| \leq |\hht(n,k)| =1+\binom{n-k-1}{k-1}+\binom{n-2}{k-2}.
\end{align}
\end{thm}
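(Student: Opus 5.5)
Our plan is to fix a vertex of maximum degree, say $1$, and split $\hf=\hf(1)\cup\hf(\bar 1)$, where $\hf(1)=\{F\in\hf\colon 1\in F\}$ and $\hf(\bar 1)=\hf\setminus\hf(1)\neq\emptyset$ by non-triviality. Since $|\hht(n,k)|\geq\binom{n-2}{k-2}$ and $\hf(1)$ has size at most $\binom{n-1}{k-1}$, we may assume $|\hf|>|\hht(n,k)|$. We first show that $\hf(\bar 1)$ is small. If $\hf$ is triangle-free, then for every $A\in\hf(\bar 1)$ the links $\hf(1)\setminus\{1\}$ restricted to sets meeting $A$ must avoid forming a triangle with $A$. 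In particular, no two sets $F,F'\in\hf(1)$ can both meet $A$ while $F\cap F'\cap A=\emptyset$, which forces $F\cap F'\neq\emptyset$ outside $A$ or a common point in $A$. The key claim is that for each $A\in\hf(\bar 1)$ there is a vertex $x_A\in A$ such that all but few members of $\hf(1)$ meeting $A$ contain $x_A$. Here "few" is at most $\binom{n-3}{k-3}$-order, obtained via a Hilton–Milner/Frankl-type shadow argument on the traces $\{F\cap A\}$, using that traces pairwise intersecting within $A$ are needed whenever the parts outside $A\cup\{1\}$ are disjoint, and disjoint parts are typical when $n\geq 12k^2$.

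Given this, we count. Members of $\hf(1)$ meeting $A$ but missing $x_A$ number $O(k^2\binom{n-3}{k-3})$. Members of $\hf(1)$ disjoint from $A$ number at most $\binom{n-k-1}{k-1}$. Members containing $\{1,x_A\}$ number at most $\binom{n-2}{k-2}$. If $|\hf(\bar1)|=1$, we obtain $|\hf|\leq 1+\binom{n-k-1}{k-1}+\binom{n-2}{k-2}$, provided the exceptional sets are absent. We argue they are absent: an $F\ni 1$ meeting $A$ but not containing $x_A$ forms a triangle with $A$ and a set $\{1,x_A\}\cup\cdots$ disjoint from $F\setminus\{1\}$ except at $1$. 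Hence each such $F$ kills at least $\binom{n-2k}{k-2}$ sets through $\{1,x_A\}$, while contributing only $1$. Since $n\geq 12k^2$, $\binom{n-2k}{k-2}$ dominates any possible gain, so a weighting/injection argument shows that the total stays below the bound.

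If $|\hf(\bar 1)|\geq 2$, we take $A,B\in\hf(\bar1)$. Then $\hf(1)$ loses either the sets disjoint from $A$ but meeting $B$ in the wrong place, or sets through $\{1,x_A\}$ that avoid $x_B$ (when $x_A\ne x_B$). Either way the loss is of order $k\binom{n-3}{k-2}$ or $\binom{n-k-1}{k-1}-\binom{n-2k-1}{k-1}\approx k\binom{n}{k-2}$. We also bound $|\hf(\bar1)|$ itself: $\hf(\bar 1)$ is triangle-free on $[2,n]$, and each of its sets must meet at least one of the sets of $\hf(1)$ in a controlled way. Using maximality of the degree of $1$ together with Theorem~\ref{thm-F76}, we show that each extra member of $\hf(\bar1)$ costs at least $\binom{n-2k}{k-2}\gg 1$ members of $\hf(1)$. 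The main obstacle is this accounting when $\hf(\bar1)$ is large, i.e.\ comparable to $\binom{n-2}{k-2}$. In that regime we would instead use that vertex $1$ has maximum degree and that a triangle-free family with two "centers" behaves like a union of stars. We then apply Theorem~\ref{thm-F76} to the family $\hf(\bar 1)$ and delta-system/kernel arguments (Frankl's $n_0(k)$ method) to reduce to the star-like case already handled. Throughout, $n\geq12k^2$ and $k\geq7$ are used to absorb the $k^2\binom{n-3}{k-3}$ error terms into $\binom{n-2k}{k-2}$.
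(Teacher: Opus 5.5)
Your proposal has a genuine gap exactly where you locate the ``main obstacle'': nothing in it bounds $|\hf(\bar 1)|$, i.e.\ shows that some vertex has degree close to $\binom{n-1}{k-1}$.

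\begin{itemize}
\item Maximality of the degree only gives $|\hf(1)|\geq k|\hf|/n$. This is of order $\binom{n-2}{k-2}$ and leaves room for $\hf(\bar 1)$ to be most of $\hf$.
\item Theorem~\ref{thm-F76} applied to $\hf(\bar 1)$ only gives $|\hf(\bar 1)|\leq\binom{n-2}{k-1}$.
\item ``Delta-system/kernel arguments'' names a method without supplying the step.
\end{itemize}

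Every later part of your plan depends on such a bound.

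\begin{itemize}
\item \textbf{The key claim.} Its correct form is the exact fact that the sets through $1$ meeting $A$ pairwise intersect inside $A$; otherwise two of them form a triangle with $A$. So their links form an intersecting family. By Hilton--Milner this family is either a star centered in $A$ or has at most $\binom{n-2}{k-2}-\binom{n-k-2}{k-2}$ members. The second alternative can be discarded only if $|\hf(\bar 1)|$ is already known to be at most about $\binom{n-k-2}{k-2}$.
\item \textbf{Loss estimates for $|\hf(\bar 1)|\geq 2$.} Your estimates of order $k\binom{n}{k-2}$ are too optimistic. Take $B=(A\setminus\{x_A\})\cup\{x_B\}$. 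There are non-trivial triangle-free families with $\hf(\bar 1)=\{A,B\}$ and $|\hf|>|\hht(n,k)|-k\binom{n-4}{k-3}$. So a second member may cost only order $k\binom{n-4}{k-3}$.
\item \textbf{The additive claim.} The claim that each extra member costs at least $\binom{n-2k}{k-2}$ is false. Take the sunflower $\{K\cup\{a_i\}\}$ with $|K|=k-1$ and all centers at one $z\in K$. The $m$-th petal costs only $\binom{n-k-m}{k-2}$, which is smaller than $\binom{n-2k}{k-2}$ once $m>k$.
\item \textbf{The case $|\hf(\bar 1)|=1$.} This case, by contrast, needs no weighting. The Erd\H{o}s--Ko--Rado theorem applied to the intersecting family of links gives at most $\binom{n-2}{k-2}$ sets through $1$ meeting $A$. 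At most $\binom{n-k-1}{k-1}$ sets through $1$ avoid $A$.
\end{itemize}

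The missing ingredient is a stability step: if $|\hf|>|\hht(n,k)|$, then some $y$ satisfies $|\hf(y)|\geq\binom{n-k-2}{k-1}$, and then $|\hf(\bar y)|<\binom{n-4}{k-3}$. In the paper this is most of the work.

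\begin{enumerate}
\item Double counting $k|\hf|=\sum_x|\hl_x|+|\hs|$ over non-unique and unique $(k-1)$-subsets gives a vertex $x$ with $|\hl_x|>\binom{n-k-4}{k-2}+\binom{k+1}{k-2}$.
\item By Lemma~\ref{lem-MV}, each component of the intersection graph on $\hl_x$ is intersecting. Then Lemma~\ref{lem-5.1}, the Erd\H{o}s--Ko--Rado theorem and Hilton--Milner produce a star component of size at least $\binom{n-k-5}{k-2}$ with some center $y$ (Lemma~\ref{lem-5.3}).
\item Frankl's bound $|\hf|\leq\nu(\hf)\binom{n-1}{k-1}$ yields a matching of size about $n/(k-2)-k$ in $\hf(x,y)$.
\item Fact~\ref{fact-6.1} says a set avoiding the kernel meets at most one petal. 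Together with Theorem~\ref{thm-F76} applied to each petal plus the uncovered vertices, this bounds $|\hf(\bar x,\bar y)|$.
\item Fact~\ref{fact-6.3} and Lemma~\ref{lem-5.1} then pass from the pair $\{x,y\}$ to the single vertex $y$ (Lemma~\ref{lem-5.4}).
\item The same matching/sunflower argument applied to $\hf(y)$ gives $|\hf(\bar y)|<\binom{n-4}{k-3}$.
\end{enumerate}

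Only with this a priori bound do the steps you sketch close. The non-star alternatives above are discarded by Hilton--Milner. The case $|\hf(\bar y)|=1$ is the Erd\H{o}s--Ko--Rado count. Two distinct $P,Q\in\hf(\bar y)$ force a loss of at least $\binom{n-4}{k-3}>|\hf(\bar y)|$.
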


Since triangle-free is a weaker restriction, \eqref{ineq-1.4} follows from \eqref{ineq-1.5} whenever \eqref{ineq-1.5} is valid. However  \eqref{ineq-1.5} does not hold for $n\leq 3k-4$.

Define
\[
\ha(n,k) :=
 \left\{
 F\in\binom{[n]}k\colon
 |F\cap [4]|\geq 3
 \right\}.
\]
It is easy to check that $\ha(n,k)$ is non-trivial and triangle-free.

\begin{prop}\label{prop-counterexample-near-three}
For $k\geq 7$ and $2k\leq n\leq 3k-4$,
\[
 |\ha(n,k)|>|\hht(n,k)|.
\]
\end{prop}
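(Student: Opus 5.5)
The plan is to compute both sizes exactly, subtract, and reduce the claim to a single binomial inequality.

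First, classify $F\in\ha(n,k)$ by $|F\cap[4]|\in\{3,4\}$:
\[
|\ha(n,k)|=4\binom{n-4}{k-3}+\binom{n-4}{k-4}.
\]
Second, expand $\binom{n-2}{k-2}$ according to the intersection with $\{3,4\}$:
\[
\binom{n-2}{k-2}=\binom{n-4}{k-2}+2\binom{n-4}{k-3}+\binom{n-4}{k-4}.
\]
Third, subtract. The terms $\binom{n-4}{k-4}$ cancel, and the inequality $|\ha(n,k)|>|\hht(n,k)|$ becomes
\[
2\binom{n-4}{k-3}-\binom{n-4}{k-2}>\binom{n-k-1}{k-1}+1 .
\]

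Next, bound the left-hand side using
\[
\binom{n-4}{k-2}=\binom{n-4}{k-3}\cdot\frac{n-k-1}{k-2}.
\]
This gives
\[
2\binom{n-4}{k-3}-\binom{n-4}{k-2}=\binom{n-4}{k-3}\cdot\frac{3k-3-n}{k-2}.
\]
The factor $\frac{3k-3-n}{k-2}$ is positive exactly because $n\le 3k-4$; this is where the upper bound on $n$ enters. It is at least $1/(k-2)$, with equality only at $n=3k-4$.

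For the right-hand side, put $m=n-2k\in[0,k-4]$ and write $\binom{n-k-1}{k-1}=\binom{k-1+m}{m}$. So it remains to prove
\[
\frac{3k-3-n}{k-2}\binom{n-4}{k-3}>\binom{k-1+m}{m}+1,\qquad 0\le m\le k-4 .
\]

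The main obstacle is making this last inequality uniform in $m$. Crude bounds lose too much at the top end. For example, $\binom{k-1+m}{m}\le\binom{n-4}{k-4}$ only gives a comparison of order $\frac{k-3}{k}$ against $\frac1{k-2}$, which is insufficient. I would instead compare the two sides directly as products. We have
\[
\binom{n-4}{k-3}=\binom{2k-4+m}{k-3}\ge\binom{2k-4+m}{m+1},
\]
which holds since $m+1\le k-3$ and $k-3\le (2k-4+m)/2$. Then
\[
\frac{\binom{2k-4+m}{m+1}}{\binom{k-1+m}{m}}=\frac{2k-4+m}{m+1}\prod_{i=1}^{m}\frac{k-4+i}{k-1+i}\cdot\prod(\text{ratios}).
\]
More cleanly, one factor-by-factor comparison gives
\[
\binom{2k-4+m}{m+1}\ge \frac{2k-4+m}{m+1}\Bigl(\frac{2k-4}{k}\Bigr)^{m}\binom{k-1+m}{m}.
\]
Since $(2k-4)/k\ge 10/7$ for $k\ge7$, this gains a factor growing exponentially in $m$ whenever $m$ is large. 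That beats the loss $k-2$ when $n$ is near $3k-4$ ($m$ near $k-4$). When $m$ is small, the slack factor $\frac{3k-3-n}{k-2}=\frac{k-3-m}{k-2}$ is close to $1$, and the trivial estimates already suffice; for example, at $m=0$ the claim reads $\frac{k-3}{k-2}\binom{2k-4}{k-3}>2$. I would split into $m\le (k-4)/2$ and $m>(k-4)/2$, handle each range by the bound above, and check the extreme case $k=7$, $n=17$ by hand: there $143>85$.
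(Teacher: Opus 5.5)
Your reduction is the same as the paper's. You are right to keep the $+1$: the paper only proves $\frac{3k-n-3}{k-2}\binom{n-4}{k-3}>\binom{n-k-1}{k-1}$, which for integers gives just $|\mathcal{A}(n,k)|\ge|\mathcal{T}(n,k)|$. The gap is in the estimate meant to finish the proof, and as written the final inequality is not proved.

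Since $\binom{2k-5+m}{m}/\binom{k-1+m}{m}=\prod_{i=1}^{m}\frac{2k-5+i}{k-1+i}$ and $2k-5>k-1$, these factors \emph{decrease} in $i$. So $\frac{2k-4}{k}$ is the largest factor, not the smallest. Your displayed bound $\binom{2k-4+m}{m+1}\ge\frac{2k-4+m}{m+1}\bigl(\frac{2k-4}{k}\bigr)^{m}\binom{k-1+m}{m}$ is therefore false, already in the extreme case you single out. For $k=7$, $m=3$ the left side is $\binom{13}{4}=715$, while the right side is $\frac{13}{4}\bigl(\frac{10}{7}\bigr)^{3}\binom{9}{3}\approx 796$. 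The base-$\frac{10}{7}$ exponential gain is not available. Moreover, the split into $m\le\frac{k-4}{2}$ and $m>\frac{k-4}{2}$ is only announced, never carried out.

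The plan can be repaired as follows.
\begin{itemize}
\item Each factor is at least $\frac{2k-5+m}{k-1+m}\ge\frac{3k-9}{2k-5}\ge\frac43$ for $m\le k-4$, $k\ge 7$. So it suffices that $h(m):=\frac{(k-3-m)(2k-4+m)}{(k-2)(m+1)}\bigl(\frac43\bigr)^{m}>1+1/\binom{k-1+m}{m}$.
\item For $m\le\frac{k-4}{2}$, the rational factor alone is at least $\frac12\cdot\frac{5k-12}{k-2}>2$, which suffices.
\item For $m\ge\frac{k-3}{2}$, you get $h(m)\ge\frac{3k-8}{(k-2)(k-3)}\bigl(\frac43\bigr)^{(k-3)/2}$, while $\binom{k-1+m}{m}\ge 28$. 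This works for every $k\ge 7$, but the margin is thin: about $1.07$ against $1.036$ at $k=9$. It has to be verified, not asserted.
\end{itemize}

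For comparison, the paper does not argue pointwise in $m$. It claims the ratio $f(n)$ of the two sides increases on $2k\le n\le 3k-5$, and handles $n=3k-4$ separately by monotonicity in $k$. That monotonicity step is itself miscomputed: the last factor of $f(n+1)/f(n)$ should be $\frac{n-2k+1}{n-k}$, and in fact $f(2k+1)<f(2k)$. So a direct comparison for each $m$, corrected as above, is actually the sounder route.
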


The proof of \eqref{ineq-1.4} is based on the two lemmas and induction. The base case of the induction is $n=2k$. For this we rely on a recent result of Cambie and Salia \cite{CambieSalia}.

\begin{prop}[\cite{CambieSalia}]\label{prop-main}
Suppose that $\hf\subset \binom{[2k]}{k}$ is non-trivial and of Helly-type, $k\geq 3$. Then
\[
|\hf| \leq  |\hht(2k,k)|=\binom{2k-2}{k-2}+2.
\]
\end{prop}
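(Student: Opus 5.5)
The plan is to exploit the fact that in $\binom{[2k]}{k}$ two $k$-sets are disjoint exactly when they are complements of each other. Accordingly, I would split $\hf$ into complementary pairs and single edges. Let $\{A_1,B_1\},\ldots,\{A_m,B_m\}$ with $B_i=[2k]\setminus A_i$ be the pairs of complementary sets that both lie in $\hf$, and let $\hs$ be the remaining edges. Then $|\hf|=2m+|\hs|$, and we make three observations.

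\begin{itemize}
\item[(i)] $m\geq 1$. Otherwise $\hf$ is intersecting, so the Helly property makes it a star, contradicting non-triviality.
\item[(ii)] The only disjoint pairs in $\hf$ are the pairs $\{A_i,B_i\}$. Hence $\hs$ is intersecting, and every edge of $\hs$ meets every $A_i$ and every $B_i$.
\item[(iii)] Choose one side $C_i\in\{A_i,B_i\}$ for each $i$. The family $\hs\cup\{C_1,\ldots,C_m\}$ is intersecting, so by the Helly property it has a common vertex $x(C_1,\ldots,C_m)$.
\end{itemize}

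Let $T=\bigcap_{S\in\hs}S$ (with $T=[2k]$ if $\hs=\emptyset$) and $t=|T|$. By (iii), for every choice of sides, $C_1\cap\cdots\cap C_m\cap T\neq\emptyset$. Consequently $|\hs|\leq\binom{2k-t}{k-t}$.

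The case $m=1$ settles the extremal case directly. The choices $A_1$ and $B_1$ give common vertices $a\in A_1\cap T$ and $b\in B_1\cap T$, which are distinct. So $\hs$ consists of sets containing $\{a,b\}$, and $|\hf|\leq 2+\binom{2k-2}{k-2}$, as required. This is exactly the configuration of $\hht(2k,k)$.

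For $m\geq 2$, the choices $(A_1,A_2)$, $(A_1,B_2)$ and $(B_1,\cdot)$ give vertices in $A_1\cap A_2$, $A_1\cap B_2$ and $B_1$ inside $T$. These are pairwise distinct, so $t\geq 3$ and $|\hs|\leq\binom{2k-3}{k-3}$. The gain over the target, $\binom{2k-2}{k-2}-\binom{2k-3}{k-3}=\binom{2k-3}{k-2}$, must now pay for the extra $2m-2$ pair-edges.

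So the heart of the proof is bounding $m$ in terms of $t$, and this is the step I expect to be the main obstacle. The condition is that the traces $\{A_i\cap T, B_i\cap T\}$ form $m$ bipartitions of $T$ such that every transversal choice of sides has a common point. If for some $i$ one side misses $T$ entirely, that side can be chosen and the condition fails, so every $A_i$ and every $B_i$ meets $T$. I would first try to show that the condition forces strong structure on the traces. Natural candidates are that all bipartitions separate a fixed point of $T$ from the rest, or a nested/laminar structure; I would test these by taking sides that avoid a given point whenever possible.

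Once the possible traces are restricted, the number of pairs with a prescribed trace is at most $\frac12\binom{2k-t}{k-s}$, where $s=|A_i\cap T|$. I would then sum over admissible traces and compare the total $2m+\binom{2k-t}{k-t}$ with $\binom{2k-2}{k-2}+2$. For $t\geq 3$ this should be a routine binomial estimate. The case $\hs=\emptyset$ (where $T=[2k]$) needs separate handling: there $\hf$ is a union of complementary pairs in which every transversal is a star, and I would expect only very few pairs to be possible.

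If this direct analysis becomes messy, the fallback is Bollob\'as--Duchet style counting, intended for large $m$. By the Fact, each edge $F\in\hf$ has a unique $(k-1)$-subset $G_F$. The fallback would combine the injectivity of $F\mapsto G_F$ with the rigidity of the complementary pairs to obtain the required bound on $m$.
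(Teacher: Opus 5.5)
Your setup is sound. The decomposition into the $m$ complementary pairs and the singles $\mathcal{S}$, observations (i)--(iii), the bound $|\mathcal{S}|\le\binom{2k-t}{k-t}$, and the case $m=1$ are all correct. But the proposal stops exactly where the remaining work is. For $m\ge 2$, and for $\mathcal{S}=\emptyset$, you give no bound on $m$, only structures to be tested and an unspecified fallback, so as written the inequality is not proved.

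Moreover, the structures you suggest testing cannot occur once $m\ge 2$. If two side-traces are nested, say $A_1\cap T\subseteq A_2\cap T$, then the choice $(A_1,B_2,\dots)$ has no common point in $T$. If they are disjoint, the choice $(A_1,A_2,\dots)$ fails. ``All bipartitions cut off the same point'' is a special case of nesting. So the condition in (iii) forces the opposite of a laminar configuration. (Also, a given unordered trace $\{P,T\setminus P\}$ with $|P|=s$ is realized by $\binom{2k-t}{k-s}$ pairs, not half of that.)

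The missing idea is a single line, and it is just the full version of the argument you used to get $t\ge 3$. A point $x\in T$ lies in exactly one of $A_i,B_i$ for each $i$, so it witnesses exactly one choice of sides. Hence the $2^m$ choices in (iii) need $2^m$ distinct witnesses in $T$, i.e.\ $2^m\le t$. Equivalently, the $m$ bipartitions induced on $T$ form an independent family. The bound then follows in two cases.
\begin{itemize}
\item If $\mathcal{S}\neq\emptyset$, then $t\le k$, and $m\ge 2$ forces $t\ge 4$. Hence $|\mathcal{F}|=2m+|\mathcal{S}|\le 2\log_2 k+\binom{2k-4}{k-4}$. This is at most $\binom{2k-2}{k-2}+2$ because $\binom{2k-2}{k-2}-\binom{2k-4}{k-4}=\binom{2k-3}{k-2}+\binom{2k-4}{k-3}\ge 2\log_2 k-2$.
\item If $\mathcal{S}=\emptyset$, then $T=[2k]$ and $|\mathcal{F}|=2m\le 2\log_2(2k)<\binom{2k-2}{k-2}+2$.
\end{itemize}
With this step inserted, your approach gives a short self-contained proof of the base case. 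The paper does not prove this base case at all; it imports it from Cambie and Salia. The trace-by-trace counting and the Bollob\'as--Duchet fallback are then unnecessary.
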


To state the lemmas let us recall some notation. For a family $\hf\subset 2^{[n]}$ and a set $P$, define
\[
\hf(P):=\left\{F\setminus P\colon P\subset F\in \hf\right\},\ \hf(\bar{P}):=\left\{F\in \hf\colon F\cap P=\emptyset\right\}.
\]
For $P=\{x\}$, we use the shorthand $\hf(x)$ and $\hf(\bar{x})$. Let us also define
\[
\hf(x,\bar{P})=\left\{F\setminus \{x\}\colon F\in \hf, F\cap (P\cup \{x\})=\{x\}\right\}.
\]
For $P=\{y\}$, we use the shorthand $\hf(x,\bar{y})$.

\begin{lem}\label{lem-key1}
Let $k\ge5$, $n\ge 2k+1$, and let
$\hf\subseteq\binom{[n]}k$ be a non-trivial Helly family.  If
$|\hf|>|\hht(n,k)|$, then there exists some  $x\in [n]$ such that
\begin{equation}
 |\hf(x)|\le \binom{n-k-2}{k-2}+\binom{n-3}{k-3}=|\hht(n,k)|-|\hht(n-1,k)|.
\end{equation}
\end{lem}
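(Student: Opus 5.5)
The plan is to exploit the Helly property around a vertex of maximum degree. This reproduces the structure of $\hht(n,k)$ locally. Then we average over the remaining vertices and use the assumption $|\hf|>|\hht(n,k)|$ to find a vertex $x$ of small degree.

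\emph{Step 1 (local structure).} Let $y$ be a vertex of maximum degree. Since $\hf$ is non-trivial, $\hf(\bar y)\neq\emptyset$; fix $T_0\in\hf(\bar y)$. Let $\hf_0$ consist of the edges through $y$ that meet $T_0$. Together with $T_0$, the family $\hf_0$ is intersecting: its members pairwise share $y$, and each meets $T_0$. If $\hf_0\neq\emptyset$, the Helly property therefore gives a vertex $z\in T_0$ common to $T_0$ and all of $\hf_0$. Consequently every edge through $y$ either avoids $T_0$ or contains $\{y,z\}$. For any $x\notin T_0\cup\{y,z\}$ this yields
\[
|\{F\in\hf: \{x,y\}\subset F\}|\le \binom{n-k-2}{k-2}+\binom{n-3}{k-3}.
\]
So it suffices to find such an $x$ that lies in no edge of $\hf(\bar y)$. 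More realistically, we need one for which the excess contributed by $\hf(\bar y)$ is compensated by missing edges through $y$.

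\emph{Step 2 (trade-off).} Write
\[
|\hf|=|\hf(y)|+|\hf(\bar y)|\le \binom{n-k-1}{k-1}+\binom{n-2}{k-2}-m+|\hf(\bar y)|,
\]
where $m\ge 0$ is the deficit of $\hf(y)$ with respect to the Step 1 bound. The hypothesis $|\hf|>|\hht(n,k)|$ then forces $|\hf(\bar y)|>m+1$. The key claim is that every edge $T\in\hf(\bar y)\setminus\{T_0\}$ destroys many potential edges through $y$.

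Apply the Step 1 argument to $T$ in place of $T_0$. The edges through $y$ meeting $T$ must share a common vertex $z_T\in T$. If $z_T\neq z$, then the edges through $\{y,z\}$ that meet $T$ but avoid $z_T$ are excluded; there are roughly $(k-1)\binom{n-3}{k-3}$ of them. If $z_T=z$, then the edges through $y$ avoiding $T_0$ but meeting $T\setminus\{z\}$, and not containing $z$, are excluded. Either way, each such $T$ removes $\Omega(k\binom{n-k-3}{k-3})$ potential edges through $y$. The same sets can be lost for several $T$ only through a bounded overlap, and we plan to control that overlap with the unique $(k-1)$-subset Fact. This converts $|\hf(\bar y)|>m+1$ into an upper bound on $|\hf(\bar y)|$ that is small compared to $\binom{n-3}{k-3}$.

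\emph{Step 3 (averaging).} Average, over the $n-k-2\ge k-1$ vertices $x\notin T_0\cup\{y,z\}$, the quantity
\[
|\{F\ni x: y\notin F\}|-\bigl(\text{deficit of }\hf(\{x,y\})\text{ from } \tbinom{n-k-2}{k-2}+\tbinom{n-3}{k-3}\bigr).
\]
We want to show this average is at most $0$, which gives the required $x$. The first term sums to at most $k|\hf(\bar y)|$, while the deficits sum to about $(k-1)m$ (each missing $y$-edge is counted for its $k-1$ other vertices). The hypothesis $n\ge 2k+1$ and $k\ge5$ should make the Step 2 bound strong enough for this comparison.

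The main obstacle is Step 2. One must show quantitatively that the edges of $\hf(\bar y)$ cost enough $y$-edges to pay for themselves in the averaging. This must hold near $n=2k+1$, where $\binom{n-k-2}{k-2}$ and $\binom{n-3}{k-3}$ are of comparable size and crude estimates fail. I would first try the case $|\hf(\bar y)|=2$ exactly, to calibrate the inequality. I would then handle general $|\hf(\bar y)|$ by noting that $\hf(\bar y)$ is itself Helly, so it splits into stars and disjoint pieces; an induction on its number of components would follow.
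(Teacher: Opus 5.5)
Your Step~1 is correct, and it is the same local-center bound the paper uses (the families $\hu_B$ and $\hht_{c,B}$). Steps~2--3, however, contain a genuine gap.

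First, with $m=|\hht(n,k)|-1-|\hf(y)|$, the inequality $|\hf(\bar y)|>m+1$ is just the hypothesis $|\hf|>|\hht(n,k)|$ rewritten. It gives nothing unless you know that $m$ is small, i.e.\ that the maximum-degree vertex $y$ is already nearly a full center. Nothing in your argument shows this. Near $n=2k+1$, where $|\hht(n,k)|<\frac12\binom{n-1}{k-1}$, the hypothesis is far too weak to give it for free.

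Second, your loss count only shows that $m$ is at least the loss caused by a single $T$. That turns $|\hf(\bar y)|>m+1$ into a \emph{lower} bound on $|\hf(\bar y)|$, not an upper bound. Anything stronger needs the $(k-1)$-sets killed by different $T\in\hf(\bar y)$ to be essentially disjoint, and they are not. All $T$ with $z_T=z$ and $T\setminus T_0$ inside a fixed small set $W$ kill sets from the single pool of $(k-1)$-sets avoiding $T_0\cup\{y\}$ and meeting $W$. Near $n=2k+1$ there are many more such $T$ than sets in that pool. The unique $(k-1)$-subset Fact does not help: it gives each edge a private subset of itself, not a private \emph{missing} set in the link of $y$.

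Third, Step~3 cannot work with your own estimates. Let $a_x$ count edges through $x$ avoiding $y$, and let $d_x$ be the deficit of the link of $\{x,y\}$. You need $\sum_x a_x\le\sum_x d_x$. But $\sum_x d_x\le(k-1)m<(k-1)(|\hf(\bar y)|-1)$, while $\sum_x a_x=\sum_{F\in\hf(\bar y)}|F\setminus T_0|$ equals $k(|\hf(\bar y)|-1)$ when the members of $\hf(\bar y)\setminus\{T_0\}$ avoid $T_0$.

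Finally, a Helly family does not split into stars on disjoint supports; $\hht(n,k)$ itself is a connected non-star Helly family. So the proposed induction on components of $\hf(\bar y)$ has no basis.

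The missing idea is a concentration step that produces a pair $x,c$ whose joint link is almost full. The paper gets it from the multiplicity classes $\hl_x$, not from degrees (Lemma~\ref{lem-4.2}):
\begin{enumerate}
\item Double counting $k|\hf|=\sum_x|\hl_x|+|\hs|$ together with \eqref{eq:pi-concentration} gives an $x$ with $|\hl_x|>\binom{n-t-2}{k-2}+\binom{t-1}{k-2}$.
\item By the Mubayi--Verstra\"ete lemma, each component of $G_x$ together with its attached members of $\hs_x$ is intersecting, hence a star by the Helly property. The supports are disjoint.
\item Then \eqref{ineq-2.3} forces a component whose support misses at most $t-1$ vertices. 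Its center $c$ satisfies $|\hf(x,\bar c)|\le\binom{t-1}{k-1}$.
\end{enumerate}
Only then does the local-center accounting become rigid. Assuming $|\hf(x)|$ exceeds the bound gives $|\hu_B\setminus\hf(x,c)|<\binom{t-1}{k-1}\le\binom{n-k-4}{k-3}$. This forces every member of $\hf(\bar x,\bar c)$ other than $B$ to be a clone $(B\setminus\{z_B\})\cup\{z\}$. The members of $\hf[x,\bar c]$ are then paid for either injectively through Claim~\ref{claim-center-payment}, or by the $\binom{n-3}{k-2}-\binom{n-k-2}{k-2}$ sets killed by a second local center, which yields $|\hf|\le|\hht(n,k)|$. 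In the paper your $y$ corresponds to $c$, and the low-degree vertex is the maximizer of $|\hl_x|$, not a vertex found by averaging.
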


For $\hf\subset \binom{[n]}{k}$, a set $T\subset [n]$ is called {\it a transversal} of $\hf$ if $T\cap F\neq \emptyset$ for any $F\in \hf$.

\begin{lem}\label{lem-key2}
Let $k\ge4$, $n\ge2k+1$, and let $\hf\subset \binom{[n]}{k}$ be a non-trivial Helly family.  If there exist $x,c\in [n]$ such that $\{x,c\}$ is a transversal of $\hf$  and $|\hf(x)|\le  \binom{n-k-2}{k-2}+\binom{n-3}{k-3}$, then
\[
|\hf|\le |\hht(n,k)|.
\]
\end{lem}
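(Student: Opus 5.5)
The plan is to split $\hf=\hf_x\cup\hf_c\cup\hf_{xc}$, where $\hf_x$ consists of the edges containing $x$ but not $c$, $\hf_c$ of those containing $c$ but not $x$, and $\hf_{xc}$ of those containing both. Since $\{x,c\}$ is a transversal, every edge lies in exactly one of these parts, and $|\hf(x)|=|\hf_x|+|\hf_{xc}|$. The hypothesis already bounds $|\hf(x)|\le \binom{n-k-2}{k-2}+\binom{n-3}{k-3}$, so everything reduces to bounding $|\hf_c|$, i.e.\ $|\hf(c,\bar{x})|$, by
\[
\binom{n-1}{k-1}-\binom{n-k-1}{k-1}-\binom{n-3}{k-3}-\binom{n-k-2}{k-2}+\ldots
\]
More precisely, the target is
\[
|\hf_c|\le |\hht(n,k)|-\binom{n-k-2}{k-2}-\binom{n-3}{k-3}=|\hht(n-1,k)|.
\]
I would therefore aim to show that $|\hf(c,\bar{x})|\le |\hht(n-1,k)|$, or rather a cleverly weighted combination of $|\hf_c|$ and $|\hf_x|$.

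The key structural input is the Helly property applied across the two stars. Take $F\in\hf_x$ and consider all $G\in\hf_c$ meeting $F$. Any two sets of $\hf_c$ intersect (they share $c$), so if $F$ meets every member of a subfamily $\hg\subset\hf_c$, then $\{F\}\cup\hg$ is intersecting. The Helly property then forces a common element, which must lie in $F\setminus\{x\}$. Hence for each $F\in\hf_x$, the members of $\hf_c$ meeting $F$ have a common vertex $y_F\in F\setminus\{x,c\}$. Symmetrically, for $G\in \hf_c$, the members of $\hf_x$ meeting $G$ share a common vertex. Non-triviality guarantees that $\hf_x$ and $\hf_c$ are both nonempty, since otherwise all edges contain $c$ (resp.\ $x$).

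Next comes the counting. Fix $F_0\in\hf_x$ and its vertex $y=y_{F_0}$. The edges of $\hf_c$ either avoid $F_0$, giving at most $\binom{n-k-1}{k-1}$ of them since they lie in $[n]\setminus(F_0\cup\{c\})$ with $c$ fixed, or contain $\{c,y\}$, giving at most $\binom{n-3}{k-2}$ of them since they avoid $x$. This yields
\[
|\hf_c|\le \binom{n-k-1}{k-1}+\binom{n-3}{k-2}.
\]
Combined with the hypothesis, we get
\[
|\hf|\le \binom{n-k-1}{k-1}+\binom{n-3}{k-2}+\binom{n-k-2}{k-2}+\binom{n-3}{k-3}=\binom{n-k-1}{k-1}+\binom{n-2}{k-2}+\binom{n-k-2}{k-2},
\]
which exceeds the target by $\binom{n-k-2}{k-2}-1$. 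So the crude count loses exactly one copy of the $\hf_x$-type term, and the real work is to trade it off.

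The refinement is to play $\hf_x$ against the part of $\hf_c$ avoiding $F_0$. If $|\hf_x|=1$, then the bound on $|\hf_{xc}|\le\binom{n-2}{k-2}$ and $|\hf_c|$ counted as above, but with $\hf_x=\{F_0\}$, should give exactly $|\hht(n,k)|$ with roles swapped ($c$ playing the role of $1$ and $x$ that of $2$). If $|\hf_x|\ge 2$, pick distinct $F_0,F_1\in\hf_x$. Any $G\in\hf_c$ missing $F_0$ but meeting $F_1$ must contain $y_{F_1}$, and any $G$ missing both lies in a smaller ground set. I would then bound
\[
|\hf_c|\le \binom{n-k-1}{k-1}-(\text{loss from } \hf_x)+\binom{n-3}{k-2},
\]
with the loss at least $|\hf_x|-1$ times something like $\binom{n-2k-1}{k-2}$. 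Alternatively, I would use a Hilton--Milner/Kruskal--Katona-type cross bound: the families $\hf(x,\bar c)$ and the non-$y$ part of $\hf(c,\bar x)$ are cross-intersection-constrained, roughly cross-disjoint-free. The bound would then be of the form $|\ha|+|\hb|\le \binom{n-2}{k-1}-\binom{n-k-1}{k-1}+1$ after suitable normalization. I expect this balancing step, i.e.\ showing that the extra $\hf_x$ edges beyond one cost at least as many $\hf_c$ edges, to be the main obstacle. I would try it via the standard cross-intersecting sum inequality (for $\ha,\hb$ cross-intersecting with $\ha\ne\emptyset$, $|\ha|+|\hb|\le\binom{m}{k-1}-\binom{m-k+1}{k-1}+1$), using $n\ge 2k+1$ and $k\ge4$ to justify the needed binomial comparisons.
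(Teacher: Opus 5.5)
There is a genuine gap, and it sits exactly at the step you flag as the main obstacle. The case $|\hf[x,\bar c]|\ge 2$ is not proved, and the framework you set up for it cannot be completed.

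You use the hypothesis on $|\hf(x)|$ as a black box and try to recover the excess $\binom{n-k-2}{k-2}-1$ from $\hf[c,\bar x]$ alone, i.e.\ you aim at $|\hf[c,\bar x]|\le|\hht(n-1,k)|$. This is false. Already $\hht(n,k)$ with $c=1$ and $x\in T_0\setminus\{2\}$ satisfies the hypothesis and attains your crude bound $|\hf[c,\bar x]|=\binom{n-k-1}{k-1}+\binom{n-3}{k-2}$. The same happens with two edges through $x$. Take $B\ni x$ with $c\notin B$, distinct $z_B,w\in B\setminus\{x\}$, $z\notin B\cup\{c\}$, and $F_z=(B\setminus\{z_B\})\cup\{z\}$. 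Let $\hf=\{B,F_z\}\cup\mathcal{G}$, where $\mathcal{G}$ consists of all $k$-sets $G\ni c$ with $x\notin G$ such that either $G\cap B=\emptyset$, or $G\cap B=\{z_B\}$ and $z\notin G$, or $\{z_B,z,w\}\subset G$. Then:
\begin{itemize}
\item $\hf$ is a non-trivial Helly family: an intersecting subfamily has common point $c$, $z_B$, $z$ or $w$ according as it contains neither of $B,F_z$, only $B$, only $F_z$, or both.
\item $\{x,c\}$ is a transversal and $|\hf(x)|=2$.
\item Yet $|\mathcal{G}|=\binom{n-k-1}{k-1}+\binom{n-k-2}{k-2}+\binom{n-5}{k-4}>|\hht(n-1,k)|$ for large $n$ (e.g.\ $547>501$ for $k=4$, $n=20$).
\end{itemize}
So the slack has to come from the actual size of $\hf[x,\bar c]$, not from $\hf[c,\bar x]$ measured against the worst case of the hypothesis.

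The proposed cross-intersecting inequality is not available either. Non-triviality plus the Helly property force $\hf[x,\bar c]$ and $\hf[c,\bar x]$ to contain a disjoint pair, and only the weaker local-star condition holds. Finally, your vertices $y_F$ are defined through $\hf[c,\bar x]$ only, so the edges containing both $x$ and $c$ are never controlled. Even for $|\hf[x,\bar c]|=1$, adding $|\hf(x,c)|\le\binom{n-2}{k-2}$ to your bound for $\hf[c,\bar x]$ overshoots. You must instead take the common vertex of all edges of $\hf[c]$ meeting $F_0$.

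The missing idea is to split $|\hf|=|\hf[c]|+|\hf[x,\bar c]|$ and take the local center $z_B$ of a single $B\in\hf[x,\bar c]$ with respect to the whole star $\hf[c]$, including the edges through $x$. Then $\hf(c)\subset\hht_{c,B}$ with $|\hht_{c,B}|=|\hht(n,k)|-1$. It therefore suffices to show $|\hht_{c,B}\setminus\hf(c)|\ge|\hf[x,\bar c]|-1$, i.e.\ every further member of $\hf[x,\bar c]$ is paid for by a missing template set. The paper does this as follows.
\begin{itemize}
\item It chooses $B$ disjoint from some $C\in\hf[c,\bar x]$.
\item Members whose local center lies in $B$ are paid for inside $\hht^{(2)}_{c,B}$ by Claim~\ref{claim-center-payment}.
\item A member whose local center lies outside $B$ either already forces a deficiency of at least $\binom{n-k-2}{k-2}+\binom{n-3}{k-3}\ge|\hf[x,\bar c]|$, or is a clone $(B\setminus\{z_B\})\cup\{z\}$. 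The first alternative, via Lemma~\ref{lem-4.1}, is where the hypothesis on $|\hf(x)|$ is used.
\item There are fewer than $n$ clones, while a single clone already removes $\binom{n-3}{k-2}-\binom{n-k-2}{k-2}>n$ sets from $\hht^{(1)}_{c,B}$.
\end{itemize}
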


\section{Proofs of  Proposition \ref{prop-counterexample-near-three} and Theorem \ref{thm-main} and preliminaries}

\begin{proof}[Proof of Proposition \ref{prop-counterexample-near-three}]
Note that
\[
 |\mathcal A(n,k)|= 4\binom{n-4}{k-3}+\binom{n-4}{k-4}.
\]
By applying Vandermonde's identity, we obtain
\begin{align*}
 \binom{n-2}{k-2}
 &=
 \binom{n-4}{k-2}
 +2\binom{n-4}{k-3}
 +\binom{n-4}{k-4}.\end{align*}
It follows that
\begin{align*}
 |\mathcal A(n,k)|-\binom{n-2}{k-2}
 &=
 2\binom{n-4}{k-3}
 -\binom{n-4}{k-2}= \frac{3k-n-3}{k-2}\binom{n-4}{k-3}.
 \label{eq-A-minus-second-layer}
\end{align*}
We are left to show that
\begin{align}\label{ineq-1.6}
 \frac{3k-n-3}{k-2}\binom{n-4}{k-3}>\binom{n-k-1}{k-1}.
\end{align}
Let $f(n)=\frac{3k-n-3}{k-2}\binom{n-4}{k-3}/\binom{n-k-1}{k-1} $. Then for $2k\leq n\leq 3k-6$ and $k\geq 7$,
\[
\frac{f(n+1)}{f(n)} = \frac{3k-n-4}{3k-n-3} \cdot \frac{n-3}{n-k}\cdot \frac{n-2k+1}{n-2k}>1.
\]
It follows that
\[
f(3k-5)\geq f(3k-6)\geq \ldots\geq f(2k) =\frac{k-3}{k-2}\binom{2k-4}{k-3}>1.
\]
We also need to show $f(3k-4)>1$.
Let $g(k)=f(3k-4)=\frac{1}{k-2}\binom{3k-8}{k-3}/\binom{2k-5}{k-1}$. Then for $k\geq 5$,
\[
\frac{g(k+1)}{g(k)} = \frac{3k(k-3)(3k-5)(3k-7)}
{4(k-1)(k-2)(2k-3)^2}>1.
\]
Since $g(4),g(5)>1$, \eqref{ineq-1.6} follows.
\end{proof}

\begin{proof}[Proof of Theorem \ref{thm-main}]
We prove the statement by induction on $n$.
Note that the base case $n=2k$  follows from Proposition  \ref{prop-main}.  Now let $n\ge 2k+1$ and assume the result for $n-1$.

Suppose that \eqref{ineq-1.4} is false.
Lemma~\ref{lem-key1} provides a vertex $x$ with
$|\hf(x)|\leq \binom{n-k-2}{k-2}+\binom{n-3}{k-3}$.  The subfamily $\hf(\bar{x})$
is nonempty, since otherwise $\hf$ would be a star of center $x$.  It is
also of Helly-type, being a subfamily of $\hf$.

If $\hf(\bar{x})$ is non-trivial, then the induction hypothesis gives
\[
 |\hf|
 =|\hf(\bar{x})|+|\hf(x)|
 \le |\hht(n-1,k)|+\binom{n-k-2}{k-2}+\binom{n-3}{k-3}=|\hht(n,k)|,
\]
a contradiction.  If $\hf(\bar{x})$ is trivial, assume that $\hf(\bar{x})$ is a star of center $c$, then $\{x,c\}$ is a transversal of $\hf$. Then
$|\hf|\le |\hht(n,k)|$ follows from Lemma~\ref{lem-key2}.
\end{proof}

For $x\in [n]$ and $\hf\subset \binom{[n]}{k}$ define a partition of $\hf(x)$:
\[
\hs_x =\{G\in \hf(x)\colon |\hf(G)|= 1\},\ \hl_x =\{G\in \hf(x)\colon |\hf(G)|\geq 2\}.
\]
Set $\hs =\cup_{x\in [n]} \hs_x$ and note  $\hs\cap \hl_x=\emptyset$.

Counting pairs $(G,F)$, $G\in \hf(x)$ in two ways yields
\[
k|\hf| =\sum_{x\in [n]} |\hl_x| +|\hs|.
\]
Define $M=\max\{|\hl_x|\colon x\in [n]\}$. Using $|\hl_x|+|\hs| \leq \binom{n}{k-1}$ we infer
\begin{align}\label{ineq-2.1}
M\geq \frac{k|\hf|-\binom{n}{k-1}}{n-1}.
\end{align}

The following observation is from \cite{F81}.

\begin{fact}[\cite{F81}]\label{fact-2.1}
Suppose that $G_1,G_2,G_3\in \hf(x)$ for some $x\in [n]$. If $G_1\cap G_3=\emptyset$ and $G_1\cap G_2\neq \emptyset\neq G_2\cap G_3$ then $G_2\in \hs_x$.
\end{fact}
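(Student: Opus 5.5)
The plan is a direct contradiction argument: assume $G_2\notin \hs_x$ and exhibit three edges of $\hf$ forming a triangle. Throughout, $\hf$ is assumed triangle-free, as in the setting of \cite{F81}; this covers the Helly case as well, since a Helly family is triangle-free. The key point is that the disjointness of $G_1$ and $G_3$ forces the two edges through $x$ built from them to meet only in $x$.

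\emph{Setting up the edges.} Since $G_1,G_2,G_3\in\hf(x)$, the sets $F_i=G_i\cup\{x\}$, $i=1,2,3$, are edges of $\hf$, and $x\notin G_i$. Because $G_2\in\hf(x)$, we have $F_2\in\hf(G_2)$, so $|\hf(G_2)|\ge 1$ and $G_2\in\hs_x\cup\hl_x$. Suppose $G_2\in\hl_x$, that is, $|\hf(G_2)|\ge 2$. Then there is an edge $F'\in\hf$ with $F'\supset G_2$ and $F'\neq F_2$. As $|F'|=k=|G_2|+1$, we get $F'=G_2\cup\{y\}$ for some $y\neq x$, and hence $x\notin F'$.

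\emph{Finding the triangle.} Consider the three edges $F_1,F',F_3$; they are pairwise distinct because $F_1\cap F_3=\{x\}$ and $x\notin F'$.
\begin{itemize}
\item $F_1\cap F'\supseteq G_1\cap G_2\neq\emptyset$.
\item $F_3\cap F'\supseteq G_3\cap G_2\neq\emptyset$.
\item $F_1\cap F_3=(G_1\cap G_3)\cup\{x\}=\{x\}$, since $G_1\cap G_3=\emptyset$.
\end{itemize}
Combining the last item with $x\notin F'$ gives $F_1\cap F'\cap F_3=\emptyset$. So $F_1,F',F_3$ form a triangle, contradicting triangle-freeness. Therefore $|\hf(G_2)|=1$, i.e.\ $G_2\in\hs_x$.

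There is no real obstacle here. The only point requiring care is that the second edge through $G_2$ must avoid $x$, and this is automatic from uniformity.
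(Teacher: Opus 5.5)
Your proof is correct: a second edge $G_2\cup\{y\}$ with $y\neq x$, together with $G_1\cup\{x\}$ and $G_3\cup\{x\}$, gives three pairwise intersecting edges whose total intersection is empty, because the two outer edges meet only in $x$. The paper states this fact without proof, citing \cite{F81}, and your argument is the standard one. You also rightly make explicit that $\mathcal{F}$ must be triangle-free (which covers Helly families), a hypothesis the paper's statement leaves implicit.
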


Define a graph $G_x$ with vertex set $\hl_x$ and edge set
\[
\{(E_1,E_2)\colon E_1,E_2\in \hl_x,\ E_1\cap E_2\neq \emptyset\}.
\]
Let $\hk\subset \hl_x$ form a connected component of $G_x$ and let $V(\hk) =\cup_{K\in \hk}K$. Define
\[
\hk^+=\hk \cup \{E\in \hs_x\colon E\cap V(\hk)\neq \emptyset\}.
\]

The next lemma is from \cite{MV}. For self-containedness we include the simple proof.

\begin{lem}[\cite{MV}]\label{lem-MV}
Let $\hf\subset \binom{[n]}{k}$ be a triangle-free family and let $x\in [n]$. If $\hk\subset \hl_x$ forms a connected component of $G_x$, then $\hk^+$ is an intersecting family.
\end{lem}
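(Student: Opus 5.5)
We need to show that any two members of $\hk^+$ intersect. The tool is a small variant of Fact~\ref{fact-2.1}: if $G_1,G_2,G_3\in \hf(x)$, $G_1\cap G_3=\emptyset$, and $G_1\cap G_2\neq\emptyset\neq G_2\cap G_3$, then $G_2\in\hs_x$. To see why, let $G_2'$ be the $k$-set with $G_2\subset G_2'\in\hf$ and $x\notin G_2'$; it exists because $G_2\in\hl_x$ would mean $\hf(G_2)$ contains a second element besides $x$. Then $G_1\cup\{x\}$, $G_3\cup\{x\}$ and $G_2'$ pairwise intersect (the first two in $x$), but their common intersection is empty, since $x\notin G_2'$ and $G_1\cap G_3=\emptyset$. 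This is a triangle, a contradiction. So I will use Fact~\ref{fact-2.1} directly.

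\textbf{Step 1: $\hk$ itself is intersecting.} Suppose $E,E'\in\hk$ are disjoint. Since $\hk$ is connected in $G_x$, choose a shortest path $E=E_0,E_1,\dots,E_m=E'$ in $G_x$, so $m\ge 2$. By minimality, $E_0\cap E_2=\emptyset$, while $E_0\cap E_1\ne\emptyset\ne E_1\cap E_2$. Fact~\ref{fact-2.1} then gives $E_1\in\hs_x$, which contradicts $E_1\in\hl_x$.

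\textbf{Step 2: a member of $\hk$ and a member of $\hs_x$.} Let $S\in \hs_x$ with $S\cap V(\hk)\ne\emptyset$, and let $K\in\hk$. Pick $K_1\in\hk$ with $S\cap K_1\neq\emptyset$. If $K=K_1$ we are done. Otherwise take a path $K_1=E_0,E_1,\dots,E_m=K$ in $G_x$, and let $j$ be the largest index with $S\cap E_j\ne\emptyset$. Suppose $j<m$. Then $E_j$ meets both $S$ and $E_{j+1}$, and $S\cap E_{j+1}=\emptyset$, so Fact~\ref{fact-2.1} applied to $(S,E_j,E_{j+1})$ gives $E_j\in\hs_x$, a contradiction. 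Hence $j=m$, i.e.\ $S\cap K\ne\emptyset$.

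\textbf{Step 3: two members of $\hs_x$.} Let $S,S'\in\hs_x$ both meet $V(\hk)$, and suppose $S\cap S'=\emptyset$. Take any $K\in\hk$ (note $\hk\neq\emptyset$). By Step 2, $K\cap S\ne\emptyset\ne K\cap S'$. Fact~\ref{fact-2.1} applied to $(S,K,S')$ then gives $K\in\hs_x$, a contradiction.

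Combining the three steps, every pair in $\hk^+$ intersects, so $\hk^+$ is intersecting. The only point that needs care is Step 2: we must propagate along the path and use the last index that still meets $S$. The rest is routine.
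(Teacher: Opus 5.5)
Your proof is correct and rests on the same mechanism as the paper's: apply Fact~\ref{fact-2.1} to three consecutive sets on a path whose middle set lies in $\hk\subset\hl_x$, which forces that set into $\hs_x$ and gives a contradiction. The paper merges your three steps into one argument: for disjoint $A,B\in\hk^+$ it takes a shortest path $A=A_0,A_1,\dots,A_t=B$ with all internal vertices in $\hk$, and then the triple $(A_0,A_1,A_2)$ already violates Fact~\ref{fact-2.1}.
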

\begin{proof}
Suppose for contradiction that $A,B\in \hk^+$ but $A\cap B=\emptyset$. Then there is a shortest path $A_0=A,A_1,A_2,\ldots,A_t=B$ with $A_1,A_2,\ldots,A_{t-1}\in \hk$ connecting $A,B$. Note that $t\geq 2$ and $A_0\cap A_2=\emptyset$. Now $A_0\cap A_1\neq \emptyset \neq A_1\cap A_2$ and Fact  \ref{fact-2.1} imply $A_1\in \hs_x$, contradicting $A_1\in \hk\subset \hl_x$.
\end{proof}

\begin{lem}
Let $v_1,v_2,\ldots,v_s$ be positive integers with $\sum_{1\leq i\leq s}v_i \leq n-1$.
Set
\[
t=\min\left\{k+2,\left\lfloor\frac{n-1}{2}\right\rfloor\right\}.
\]
If $v_i\leq n-1-t$, $i=1,2,\ldots,s$, then
\begin{align}\label{ineq-2.3}
\sum_{1\leq i\leq s} \binom{v_i-1}{k-2} \leq\binom{n-t-2}{k-2}+\binom{t-1}{k-2}.
\end{align}
\end{lem}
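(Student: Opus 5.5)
We view the left-hand side as a sum of a convex function $h(v)=\binom{v-1}{k-2}$ (convex and nondecreasing in the integer variable $v\ge 1$, and zero for $v\le k-2$) over integers in $[1,n-1-t]$ with total at most $n-1$. The plan is a standard smoothing/convexity argument. We may assume $\sum v_i=n-1$ by increasing some $v_i$ while the cap allows, since $h$ is nondecreasing. If no $v_i$ can be increased, all are equal to $n-1-t$, and the total is at most $n-1$; this leaves few configurations, which we treat the same way below. Parts $v_i\le k-2$ contribute nothing.

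\textbf{Step 1 (smoothing).} Take two parts $v_i\ge v_j$ with both strictly between the extremes, that is $v_i<n-1-t$ and $v_j>1$. By convexity, replacing $(v_i,v_j)$ by $(v_i+1,v_j-1)$ does not decrease the sum. Iterating this, we reach a configuration in which at most one part lies strictly between $1$ and the cap $c:=n-1-t$.

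\textbf{Step 2 (case split on the number of capped parts).} Let $m$ be the number of parts equal to $c$. Then $mc\le n-1$, that is $m(n-1-t)\le n-1$.

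If $m\ge 2$, then $2(n-1-t)\le n-1$, so $t\ge (n-1)/2$. Since $t\le\lfloor (n-1)/2\rfloor$, we get $t=(n-1)/2$ exactly. In this case the extremal configuration is two parts of size $t$ plus ones, giving $2\binom{t-1}{k-2}$. This equals the right-hand side, because $n-t-2=t-1$.

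If $m=1$, the remaining parts sum to at most $n-1-c=t$. After smoothing there is one middle part of size at most $t$ plus ones, so the sum is at most $\binom{c-1}{k-2}+\binom{t-1}{k-2}$. This is exactly the right-hand side.

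If $m=0$, all parts are at most $c$ and there is at most one middle part. We handle this by continuing to smooth: push the largest part up to $c$, which requires enough total mass since $c\le n-1$. This reduces to the case $m=1$.

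\textbf{Main obstacle.} The delicate point is justifying the smoothing while respecting the cap. The cap must prevent the trivial extremal configuration (a single part $n-1$). We must also check that moving mass into the middle part never overshoots $t$; this holds because the sum constraint caps the remainder at $t$. The hypothesis $t\le k+2$ appears irrelevant here beyond guaranteeing $c\ge t$ (since $t\le(n-1)/2$). If convexity at small $v$ needs care (where $h$ is zero), we note that $h$ is still convex on all positive integers, so the exchange inequality $h(a+1)-h(a)\ge h(b)-h(b-1)$ for $a\ge b$ holds throughout.
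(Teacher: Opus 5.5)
Your proof is correct and is essentially the paper's argument. Both use the discrete convexity of $v\mapsto\binom{v-1}{k-2}$ (the paper's exchange inequality $\binom{a}{k-2}+\binom{b}{k-2}\le\binom{a+1}{k-2}+\binom{b-1}{k-2}$ for $k-2\le b\le a$) to push mass into a largest part until it reaches the cap $n-1-t$. The only real difference is the endgame: the paper merges the leftover parts, whose total is at most $t$, into a single part by the same exchange, which avoids your case split on the number $m$ of capped parts and never uses $t\le\lfloor (n-1)/2\rfloor$, whereas your $m\ge 2$ case does rely on it.
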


\begin{proof}
If $k-2\leq b\leq a$, then
\begin{align}\label{ineq-2.2}
\binom{a}{k-2}+\binom{b}{k-2}\leq \binom{a+1}{k-2}+\binom{b-1}{k-2},
\end{align}
which follows from $\binom{b-1}{k-3}\leq \binom{a}{k-3}$. Without loss of generality, assume that $v_1\geq v_2\geq \ldots\geq v_s$. Since $v_1-1\leq n-t-2$, apply \eqref{ineq-2.2} with $(a,b)=(v_1-1,v_i-1)$ for some $i\in \{2,3,\ldots,s\}$ repeatedly we obtain that
\[
\sum_{1\leq i\leq s} \binom{v_i-1}{k-2} \leq\binom{n-t-2}{k-2}+ \sum_{2\leq i\leq s} \binom{v_i'-1}{k-2},\ \sum_{2\leq i\leq s}v_i' \leq t.
\]
By applying \eqref{ineq-2.2} repeatedly again,
\[
\sum_{2\leq i\leq s} \binom{v_i'-1}{k-2} \leq  \binom{\sum_{2\leq i\leq s} v_i' -s+1}{k-2} \leq \binom{t-1}{k-2}.
\]
Thus \eqref{ineq-2.3} follows.
\end{proof}

For convenience let us introduce the notation:
\[
\hf[x]=\{F\in \hf\colon x\in F\}, \ \hf[x,\bar{B}] =\{F\in \hf\colon x\in F,\ F\cap B=\emptyset\}
\]
and $\hf[x,\bar{y}] =\hf[x,\overline{\{y\}}]$.

Note that by definition for every $B\in \hf(\bar{y})$ the family $\{B\}\cup (\hf[y]\setminus \hf[y,\bar{B}])$ is intersecting. By the Helly-property, we can fix an element $z_B\in B$ that is contained in each $F\in \hf[y]\setminus \hf[y,\bar{B}]$. We call $z_B$ the {\it local center of $B$} with respect to $y$.

For any $x$, $B\in \binom{[n]\setminus \{x\}}{k}$ and for a fixed $z\in B$, define
\[
\hht_{x,B}^{(1)}:=\left\{E\in \binom{[n]\setminus \{x\}}{k-1}\colon z\in E\right\},\ \hht_{x,B}^{(2)}:=\left\{E\in \binom{[n]\setminus \{x\}}{k-1}\colon E\cap B= \emptyset\right\}.
\]
Let $\hht_{x,B}=\hht_{x,B}^{(1)}\cup \hht_{x,B}^{(2)}$. It is easy to see that $\hht_{x,B}$ is isomorphic to $\hht(n,k)(1)$. Moreover, $\hf(x)\subset \hht_{x,B}$ holds for any $x$ and $B\in \binom{[n]\setminus \{x\}}{k}$.

Let $\hd=\left\{D\in \hf[\bar{y}, z_B]\colon z_D\in B,\ D\neq B\right\}$.

\begin{claim}\label{claim-center-payment}
If $\hf(x,\bar{B})\neq \emptyset$, then $|\hht_{x,B}^{(2)}\setminus \hf(x)| \geq |\hd|$.
\end{claim}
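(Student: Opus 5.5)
The plan is to read the claim with $x=y$ and $z=z_B$ in the definition of $\hht_{x,B}$. This is the reading under which $\hf(x)\subset\hht_{x,B}$ holds: every $F\in\hf[y]$ either misses $B$ or contains the local center $z_B$. Set $Y=[n]\setminus(B\cup\{x\})$, so that $\hht^{(2)}_{x,B}=\binom{Y}{k-1}$. The aim is an injection $\varphi\colon\hd\to\binom{Y}{k-1}\setminus\hf(x)$.

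\emph{Step 1: a (k-1)-set meeting some $D\in\hd$ cannot lie in $\hf(x)$.} Let $E\in\binom{Y}{k-1}$ with $E\cap D\neq\emptyset$ for some $D\in\hd$. Suppose $E\in\hf(x)$. Then $F=E\cup\{x\}\in\hf[x]$ and $F$ meets $D$. Since $x\notin D$, we have $D\in\hf(\bar x)$, so $D$ has a local center $z_D$ with respect to $x$. Hence $z_D\in F$. But $z_D\in B$ by the definition of $\hd$, while $F\cap B=\emptyset$, a contradiction. Two consequences follow.
\begin{itemize}
\item Every set meeting $U:=\bigcup_{D\in\hd}(D\setminus B)$ lies outside $\hf(x)$.
\item Every member of $\hf(x,\bar B)$ is disjoint from $U$.
\end{itemize}
The second consequence uses that a member of $\hf(x,\bar B)$ already avoids $B$.

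\emph{Step 2: fix an auxiliary set.} By hypothesis we may fix $E_0\in\hf(x,\bar B)$. Then $E_0\in\binom{Y}{k-1}$, and $E_0\cap U=\emptyset$ by Step 1.

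\emph{Step 3: group $\hd$ by $S=D\setminus B$.} For $D\in\hd$ put $S=D\setminus B$ and $s=|S|$. Since $D\neq B$ and $|D|=|B|=k$, we have $S\neq\emptyset$. Since $z_B\in D\cap B$, we have $s\le k-1$. Moreover $D\cap B$ is a $(k-s)$-subset of $B$ containing $z_B$. So the members of $\hd$ with a given $S$ are determined by a choice of $(k-s-1)$ elements of $B\setminus\{z_B\}$. There are at most $\binom{k-1}{k-s-1}=\binom{k-1}{s}$ of them.

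On the other side, the sets $S\cup W$ with $W\in\binom{E_0}{k-1-s}$ form exactly $\binom{k-1}{k-1-s}=\binom{k-1}{s}$ members of $\binom{Y}{k-1}$. Each of them meets $S\subset U$, so by Step 1 none lies in $\hf(x)$. Fix any bijection $\psi_S$ from $\binom{B\setminus\{z_B\}}{k-s-1}$ onto $\binom{E_0}{k-1-s}$. Define
\[
\varphi(D)=S\cup\psi_S\bigl(D\cap B\setminus\{z_B\}\bigr).
\]

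\emph{Step 4: injectivity.} Since $E_0\cap U=\emptyset$, the set $S$ is recovered from $\varphi(D)$ as $\varphi(D)\setminus E_0$. Given $S$, the set $D$ is recovered from $\psi_S^{-1}$. Hence $\varphi$ is injective and
\[
|\hht^{(2)}_{x,B}\setminus\hf(x)|\ge|\hd|.
\]

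The main obstacle is Step 1, specifically pinning down the intended roles of $x$, $y$ and $z$ so that the local-center property forces $E\cup\{x\}\notin\hf$. Once that is in place, the fact that $E_0$ has exactly $k-1$ elements makes the counts in Step 3 match exactly, and the rest is bookkeeping.
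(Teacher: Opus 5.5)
Your proof is correct and follows the paper's argument. The paper likewise shows that every $(k-1)$-set disjoint from $B\cup\{x\}$ that meets $U=\bigcup_{D\in\hd}(D\setminus B)$ is missing from $\hf(x)$, and fixes $E\in\hf(x,\bar B)$, which is then disjoint from $U$. It compares $|\hd|\le\binom{u+k-1}{k-1}-1$ with $\bigl|\binom{U\cup E}{k-1}\setminus\{E\}\bigr|=\binom{u+k-1}{k-1}-1$. Your grouping by $S=D\setminus B$ with the bijections $\psi_S$ simply makes explicit the exchange of $B\setminus\{z_B\}$ for $E_0$ that underlies this count, and your reading $y=x$, $z=z_B$ is the intended one (the paper applies the claim with $x=y=c$).
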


\begin{proof}
 Let $U=\cup_{D\in \hd} (D\setminus B)$ and set $u=|U|$.   We claim that every $E\in \hf(x,\bar{B})$ satisfies $E\cap U=\emptyset$. Indeed, if $E\cap U\neq \emptyset$ then $E\cap D\neq \emptyset$ for some $D\in\hd$. However, $E\cap B=\emptyset$ and $z_D\in B$ imply $z_D\notin E$, a contradiction.

Fix some $E\in \hf(x,\bar{B})$.  Since every $D\in\hd$ contains
$z_B$ and $D\neq B$, we infer that
\[
 |\hd|\leq \binom{u+k-1}{k-1}-1.
\]
On the other hand, every member of $\binom{U\cup E}{k-1}\setminus\{E\}$ intersects $U$ and is disjoint from $B$. Thus none of them is in $\hf(x)$.  Moreover, the number of these $(k-1)$-sets is exactly $\binom{u+k-1}{k-1}-1$. Thus,
\[
|\hht_{x,B}^{(2)}\setminus \hf(x)| \geq  \binom{u+k-1}{k-1}-1 \geq  |\hd|.
\]
\end{proof}

The {\it matching number $\nu(\hf)$} of $\hf$ is defined as the maximum number of pairwise disjoint members in $\hf$.
The {\it covering number $\tau(\hf)$} of $\hf$ is defined as the minimum size of $T$ such that $T$ is a transversal, that is, $T\cap F \neq \emptyset$ for all $F\in \hf$.

We need the following inequality generalizing the Erd\H{o}s-Ko-Rado Theorem.

\begin{prop}[\cite{F87}]
Suppose that $\hf\subset \binom{[n]}{k}$ and $n\geq (\nu(\hf)+1)k$. Then
\begin{align}\label{ineq-EMCup}
|\hf|\leq \nu(\hf)\binom{n-1}{k-1}.
\end{align}
\end{prop}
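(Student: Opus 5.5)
We argue by double induction on $k$ and $n$, after first reducing to shifted families; write $s=\nu(\hf)$ throughout. The base case $k=1$ is trivial, since a $1$-uniform family with matching number $s$ has at most $s=s\binom{n-1}{0}$ members. We may also assume $\hf$ is shifted, i.e.\ invariant under all $(i,j)$-shifts with $i<j$. This uses the standard facts that shifting preserves $|\hf|$ and does not increase $\nu(\hf)$. Since the claimed bound is monotone in $\nu$, it suffices to prove it for shifted $\hf$ with $\nu(\hf)\le s$.

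\emph{Boundary case $n=(s+1)k$.} We use an averaging argument over perfect matchings. Choose a uniformly random partition of $[n]$ into $s+1$ disjoint $k$-sets. At most $s$ of these parts lie in $\hf$, because $\nu(\hf)\le s$. Each part is a uniformly random $k$-set, so the expected number of parts in $\hf$ is $(s+1)|\hf|/\binom nk$. Hence $|\hf|\le \frac{s}{s+1}\binom nk$. Since $\binom{n}{k}=\frac nk\binom{n-1}{k-1}=(s+1)\binom{n-1}{k-1}$, this gives exactly $|\hf|\le s\binom{n-1}{k-1}$. This case does not use shiftedness, and it serves as the base of the induction on $n$ for each fixed $k$.

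\emph{Inductive step $n\ge (s+1)k+1$.} We split $|\hf|=|\hf(\bar n)|+|\hf(n)|$.

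The family $\hf(\bar n)\subset\binom{[n-1]}{k}$ satisfies $\nu\le s$ and $n-1\ge (s+1)k$. Induction on $n$ gives $|\hf(\bar n)|\le s\binom{n-2}{k-1}$.

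For $\hf(n)\subset\binom{[n-1]}{k-1}$ we claim $\nu(\hf(n))\le s$, and this is where shiftedness is used. Suppose $G_1,\dots,G_{s+1}\in\hf(n)$ are pairwise disjoint. Their union has size $(s+1)(k-1)$. Since $n-1\ge (s+1)k$, at least $s+1$ elements of $[n-1]$ lie outside the union; call them $y_1,\dots,y_{s+1}$. Each $G_i\cup\{n\}\in\hf$, and $y_i<n$ with $y_i\notin G_i$, so shiftedness gives $G_i\cup\{y_i\}\in\hf$. These $s+1$ sets are pairwise disjoint, contradicting $\nu(\hf)\le s$. Moreover $n-1\ge (s+1)(k-1)$, so induction on $k$ gives $|\hf(n)|\le s\binom{n-2}{k-2}$.

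Adding the two bounds gives $|\hf|\le s\big(\binom{n-2}{k-1}+\binom{n-2}{k-2}\big)=s\binom{n-1}{k-1}$, completing the induction.

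The only delicate points are the following.
\begin{itemize}
\item The threshold: the proof of $\nu(\hf(n))\le s$ needs $s+1$ spare elements, hence $n\ge (s+1)k+1$. This is exactly why the boundary case $n=(s+1)k$ is handled separately by the matching-averaging argument.
\item The standard lemma that shifting does not increase the matching number. I expect this to be the main technical ingredient to verify. The usual argument takes a matching of size $s+1$ in $S_{ij}(\hf)$ and modifies at most the two members that involve $i$ or $j$ to obtain a matching of the same size in $\hf$.
\end{itemize}
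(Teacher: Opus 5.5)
The paper does not prove this proposition; it quotes it from \cite{F87}. Your argument is correct and is essentially the standard proof of this bound: averaging over random partitions of $[n]$ into $s+1$ blocks when $n=(s+1)k$, and for $n>(s+1)k$ shifting plus induction on $n$ and $k$, where shiftedness and the $s+1$ spare elements give $\nu(\hf(n))\le s$. Your reformulation with $\nu(\hf)\le s$ correctly absorbs any drop of $\nu$ under shifting, and your sketch that $S_{ij}$ does not increase $\nu$ (repairing at most the two matching members that meet $\{i,j\}$) is the right argument.
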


\begin{prop}[\cite{FW22}]
Let $n,k,i$ be positive integers. Then
\begin{align}\label{ineq-key0}
\binom{n-i}{k} \geq \frac{n-ik}{n}\binom{n}{k}, \mbox{\rm \ for } n>ik.
\end{align}
\end{prop}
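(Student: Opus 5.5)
We need to prove \eqref{ineq-key0}: $\binom{n-i}{k}\ge \frac{n-ik}{n}\binom{n}{k}$ for $n>ik$.

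The plan is to write the ratio as a product and bound it with an elementary inequality:
\[
\frac{\binom{n-i}{k}}{\binom{n}{k}}=\prod_{j=0}^{i-1}\frac{\binom{n-j-1}{k}}{\binom{n-j}{k}}=\prod_{j=0}^{i-1}\Bigl(1-\frac{k}{n-j}\Bigr).
\]
If some factor is nonpositive (i.e.\ $n-j<k$ for some $j\le i-1$), then $n<k+i-1\le ik$ for $k\geq 1$. This contradicts $n>ik$, so every factor lies in $(0,1]$.

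The key tool is the Weierstrass product inequality. For $a_j\in[0,1]$ it gives $\prod(1-a_j)\ge 1-\sum a_j$, and it follows by a one-line induction: $(1-s)(1-a)\ge 1-s-a$ whenever $s,a\ge 0$ and $1-a\ge 0$, with $s$ the current sum. Applying it with $a_j=k/(n-j)$ yields
\[
\frac{\binom{n-i}{k}}{\binom{n}{k}}\ge 1-\sum_{j=0}^{i-1}\frac{k}{n-j}.
\]

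Unfortunately $\frac{k}{n-j}\ge \frac kn$, so this lower bound is slightly weaker than the target $1-\frac{ik}{n}$. I would therefore avoid the crude telescoping and induct on $i$ directly. Assume $\binom{n-i}{k}\ge \frac{n-ik}{n}\binom nk$. Then
\[
\binom{n-i-1}{k}=\frac{n-i-k}{n-i}\binom{n-i}{k}\ge\frac{n-i-k}{n-i}\cdot\frac{n-ik}{n}\binom nk.
\]
It then suffices to check that $(n-i-k)(n-ik)\ge (n-i)(n-ik-k)$, assuming $n>(i+1)k$. Expanding, the difference is
\[
(n-i-k)(n-ik)-(n-i)(n-ik-k)=-k(n-ik)+k(n-i)=k(ik-i)\ge 0,
\]
so the induction step works. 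The base case $i=0$ is trivial, and $i=1$ is the identity $\binom{n-1}{k}=\frac{n-k}{n}\binom nk$.

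I must also handle the hypothesis in the inductive step. We need $n>ik$ to apply the hypothesis, which follows from $n>(i+1)k$. All the factors are positive because $n-ik>0$ and $n-i-k>0$.

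The main (mild) obstacle is just realising that the naive product bound loses; checking the polynomial inequality above is the heart of the argument, and it reduces to $k(k-1)i\ge0$.
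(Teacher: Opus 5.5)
The paper gives no proof of this inequality; it is quoted from \cite{FW22}, so there is no argument in the text to compare against. Your induction on $i$ is correct and complete. For fixed $n>(i+1)k$ you also have $n>ik$, so the hypothesis applies. You also have $n-i-k>i(k-1)\ge 0$ and $n-i>0$, so multiplying by $\frac{n-i-k}{n-i}$ preserves the inequality. Finally, the identity $(n-i-k)(n-ik)-(n-i)(n-ik-k)=ik(k-1)\ge 0$ is right.

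The Weierstrass paragraph can simply be deleted. (There, a factor is nonpositive when $n-j\le k$, not $n-j<k$, which is harmless.)

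For comparison, a shorter route fixes the union-bound idea you abandoned. The quantity $\binom{n}{k}-\binom{n-i}{k}$ counts the $k$-subsets meeting a fixed $i$-set $I$, and each of them contains some $x\in I$. Hence
\[
\binom{n}{k}-\binom{n-i}{k}\le i\binom{n-1}{k-1}=\frac{ik}{n}\binom{n}{k}.
\]
This uses the unconditional ratio $k/n$ for each element of $I$, rather than the conditional ratios $k/(n-j)$ that made your product bound too weak. It needs no induction. It also holds for all $n\ge i$, being trivial when $n\le ik$.

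Your induction is a self-contained algebraic check that shows the slack at each step is exactly $ik(k-1)$. The counting argument is a one-liner and makes clear why the constant $ik/n$ appears.
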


Let us prove four more simple statements.

\begin{fact}\label{fact-6.1}
Suppose that $\hf$ is triangle-free and it contains
the sunflower $\{C\cup D_1,\ldots,C\cup D_s\}$,  $C\neq \emptyset$, $D_1,D_2,\ldots,D_s$ are pairwise disjoint. Then for every $E\in \hf(\bar{C})$, $E\cap D_i\neq \emptyset$ can hold for at most one value of $1\leq i\leq s$.
\end{fact}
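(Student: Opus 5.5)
The plan is to argue by contradiction. Suppose $E\in \hf(\bar{C})$ meets both $D_i$ and $D_j$ for some $1\le i<j\le s$. We then exhibit three members of $\hf$ forming a triangle, which contradicts the assumption that $\hf$ is triangle-free.

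The natural candidates are $F_1=C\cup D_i$, $F_2=C\cup D_j$ and $F_3=E$. First check pairwise intersections. Since $C\neq\emptyset$, we have $F_1\cap F_2\supset C\neq\emptyset$. By assumption $E\cap D_i\neq\emptyset$, so $F_3\cap F_1\neq\emptyset$, and likewise $F_3\cap F_2\neq\emptyset$ because $E\cap D_j\neq\emptyset$.

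Next compute the triple intersection. Because the $D$'s are pairwise disjoint and (as petals of a sunflower with kernel $C$) disjoint from $C$, we get $F_1\cap F_2=C$. Since $E\in\hf(\bar C)$, $E\cap C=\emptyset$, and hence $F_1\cap F_2\cap F_3=C\cap E=\emptyset$.

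So $\{F_1,F_2,F_3\}$ is a triangle in $\hf$, the desired contradiction. There is no real obstacle here. The only point needing care is that $F_1\cap F_2$ is exactly $C$, which uses both $D_i\cap D_j=\emptyset$ and $D_i\cap C=D_j\cap C=\emptyset$. The latter is implicit in the sunflower notation $C\cup D_i$ with kernel $C$; if it were not assumed, one would replace $D_i$ by $D_i\setminus C$ throughout.
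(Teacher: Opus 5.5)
Your proof is correct and is exactly the paper's argument: if $E$ meets both $D_i$ and $D_j$, then $E$, $C\cup D_i$, $C\cup D_j$ form a triangle. Your caveat about needing $D_i\cap C=\emptyset$ is unnecessary, because $(C\cup D_i)\cap(C\cup D_j)=C\cup(D_i\cap D_j)=C$ holds by distributivity whenever $D_i\cap D_j=\emptyset$.
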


\begin{proof}
Indeed, if $E\cap D_i\neq \emptyset \neq E\cap D_j$ then $E,C\cup D_i,C\cup D_j$ form a triangle, a contradiction.
\end{proof}

\begin{fact}\label{fact-6.2}
 Let $\hf\subset \binom{[n]}{k}$ be triangle-free. Assume that  $\nu(\hf(C))\geq s$ and set $q=n-|C|-(s-1)(k-|C|)$, then
\[
|\hf(\bar{C})|\leq s\binom{q}{k-1}.
\]
\end{fact}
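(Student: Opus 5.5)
Fix $s$ pairwise disjoint members $D_1,\ldots,D_s$ of $\hf(C)$; this is possible since $\nu(\hf(C))\geq s$. Each $D_i$ is a $(k-|C|)$-subset of $[n]\setminus C$, so the sets $C\cup D_1,\ldots,C\cup D_s$ form a sunflower in $\hf$ with kernel $C\neq\emptyset$. (If $C=\emptyset$ the statement is vacuous or must be treated separately; I assume $C\neq\emptyset$ as in Fact~\ref{fact-6.1}.) Set $D=D_1\cup\ldots\cup D_s$.

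\textbf{Partition and count.} By Fact~\ref{fact-6.1}, every $E\in\hf(\bar C)$ meets at most one of the petals $D_i$. So $\hf(\bar C)$ splits into the classes
\[
\hf_0=\{E\in\hf(\bar C)\colon E\cap D=\emptyset\},\qquad \hf_i=\{E\in\hf(\bar C)\colon E\cap D_i\neq\emptyset\},\ 1\le i\le s.
\]
\begin{itemize}
\item A member of $\hf_i$ lies in $[n]\setminus\bigl(C\cup\bigcup_{j\neq i}D_j\bigr)$. This ground set has exactly $n-|C|-(s-1)(k-|C|)=q$ elements.
\item A member of $\hf_0$ lies in a set of $q-(k-|C|)$ elements.
\end{itemize}
Inside its $q$-element ground set, every member of $\hf_i$ is a $k$-set required to meet the fixed set $D_i$.

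\textbf{Target inequality.} The goal is $|\hf_0|+\sum_{i\ge1}|\hf_i|\le s\binom{q}{k-1}$. A crude union bound over the elements of $D_i$ gives $|\hf_i|\le |D_i|\binom{q-1}{k-1}$, which is too large by the factor $k-|C|$. So a sharper argument is needed, and it should again exploit triangle-freeness.

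\textbf{First approach: associate $E$ with a single $(k-1)$-set.} For $E\in\hf_i$, consider $E\cap D_i$.
\begin{itemize}
\item If $E\cap D_i$ is a single point $d$, then $E\setminus\{d\}$ is a $(k-1)$-subset of the $q$-set. I would charge $E$ to this $(k-1)$-set together with the index $i$.
\item Collisions would be handled with the triangle-free structure. Two sets $E,E'\in\hf_i$ with the same trace outside $D_i$ contain a common $(k-1)$-set, so they intersect. Together with $C\cup D_i$ they then form a triangle unless $E\cap E'\cap D_i\neq\emptyset$, which is impossible if both meet $D_i$ in single distinct points. Hence the map $E\mapsto E\setminus D_i$ is injective on those $E$ meeting $D_i$ in one point.
\end{itemize}

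\textbf{Second approach: a shadow-type argument.} Alternatively, I would bound $|\hf_i|$ by $\binom{q}{k-1}$ directly, by showing that the sets in $\hf_i$ restricted away from one chosen point have distinct $(k-1)$-shadows. Applying a Bollob\'as--Duchet style unique $(k-1)$-subset argument inside the star of $C\cup D_i$ would give an injection $\hf_i\to\binom{[q]}{k-1}$. Then $\hf_0$ must be absorbed. Each $E\in\hf_0$ is disjoint from $C\cup D$, and one can assign it to a $(k-1)$-set avoiding $D$. The number of such $(k-1)$-sets is $\binom{q-(k-|C|)}{k-1}$, and these are not used by the $\hf_i$, since every set charged by $\hf_i$ meets $D_i$.

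\textbf{Main obstacle.} The hard step is the injection for $\hf_i$ when $E$ meets $D_i$ in several points. There I would try charging $E$ to $E\setminus\{\min(E\cap D_i)\}$ and use triangles with $C\cup D_i$ and with other members to rule out collisions. If that fails, I would fall back on EKR-type counting, using \eqref{ineq-EMCup} on $\hf_i$ within its $q$-set.
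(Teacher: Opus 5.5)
\textbf{There is a genuine gap.} Your setup matches the paper's. You fix a matching $D_1,\ldots,D_s$ in $\hf(C)$, and you use Fact~\ref{fact-6.1} to see that every $E\in\hf(\bar C)$ lies in one of the $q$-element sets $U\cup D_i$, where $U=[n]\setminus(C\cup D_1\cup\ldots\cup D_s)$. But the proposal stops exactly where the work is. You never prove that the members of $\hf$ inside such a $q$-set number at most $\binom{q}{k-1}$.

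\textbf{Why each of your routes fails.}

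Your first charging argument is correct as far as it goes. However, it only covers those $E$ with $|E\cap D_i|=1$. It says nothing about $|E\cap D_i|\ge 2$ or about $\hf_0$. It is also inconsistent with your later bookkeeping: the images $E\setminus\{d\}$ lie in $U$, so they are not ``free'' $(k-1)$-sets meeting $D_i$, and they collide with the sets you want to reserve for $\hf_0$.

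The second approach does not work. The Bollob\'as--Duchet unique-$(k-1)$-subset fact needs the Helly property, and it fails for triangle-free families. For example, in the triangle-free family $\ha(n,k)$, take an edge $F$ with $F\cap[4]=\{1,2,3\}$. Every $(k-1)$-subset of $F$ extends, by adding $4$, to a different member of $\ha(n,k)$, so $F$ has no unique $(k-1)$-subset.

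The fallback via \eqref{ineq-EMCup} loses precisely the factor you were trying to avoid. All members of $\hf_i$ meet the $(k-|C|)$-set $D_i$, so the only bound available is $\nu(\hf_i)\le k-|C|$. That yields $|\hf_i|\le (k-|C|)\binom{q-1}{k-1}$, no better than your crude union bound.

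\textbf{The missing idea.} No hand-made injection is needed. For each $i$, the family $\hf\cap\binom{U\cup D_i}{k}$ is itself triangle-free, being a subfamily of $\hf$. It lives on the $q$-element ground set $U\cup D_i$ and contains $\hf_i\cup\hf_0$. Theorem~\ref{thm-F76} (Erd\H{o}s' triangle-free conjecture, proved by Frankl and by Mubayi--Verstra\"ete) gives:
\begin{itemize}
\item $\bigl|\hf\cap\binom{U\cup D_i}{k}\bigr|\le\binom{q-1}{k-1}\le\binom{q}{k-1}$ when $q\ge 3k/2$;
\item the trivial bound $\binom{q}{k}\le\binom{q}{k-1}$ when $q<3k/2$, since then $q\le 2k-1$.
\end{itemize}
Summing over $i$ gives $|\hf(\bar C)|\le s\binom{q}{k-1}$; the union bound harmlessly counts $\hf_0$ several times.

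Bounding a triangle-free family on a $q$-set by roughly $\binom{q}{k-1}$ from scratch is essentially the content of that nontrivial theorem. That is why your charging attempts stall at the ``main obstacle.''
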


\begin{proof}
Let $D_1,D_2,\ldots,D_s$ be a matching in $\hf(C)$ and let $U=[n]\setminus (C\cup D_1\cup D_2\cup \ldots\cup D_s)$. For $E\in \hf(\bar{C})$, by Fact \ref{fact-6.1} $E\subset U\cup D_i$ for some $1\leq i\leq s$. Since $\hf\cap \binom{U\cup D_i}{k}$ is triangle-free, by Theorem \ref{thm-F76} we infer $|\hf\cap \binom{U\cup D_i}{k}|\leq \binom{q}{k-1}$ for $|U\cup D_i|>\frac{3k}{2}$ and $|\hf\cap \binom{U\cup D_i}{k}|\leq \binom{q}{k}<\binom{q}{k-1}$ for $|U\cup D_i|\leq \frac{3k}{2}$.  Thus,
\[
|\hf(\bar{C})|\leq \sum_{1\leq i\leq s} \left|\hf\cap \binom{U\cup D_i}{k}\right|\leq  s\binom{q}{k-1}.
\]
\end{proof}

\begin{fact}\label{fact-6.3}
Fix $x_1,x_2\in [n]$ and let $\hf\subset \binom{[n]}{k}$ be triangle-free. Then either $\hf(x_1,\overline{x_2})$ and $\hf(\overline{x_1},x_2)$ are vertex-disjoint or
\[
|\hf(x_1,x_2)| \leq \binom{n-2}{k-2}-\binom{n-k-1}{k-2} \leq (k-1) \binom{n-3}{k-3}.
\]
\end{fact}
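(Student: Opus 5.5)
The plan is to prove the dichotomy directly: assume the first alternative fails and derive the cardinality bound from triangle-freeness applied to a suitable triple. Here ``vertex-disjoint'' is read as: every member of $\hf(x_1,\overline{x_2})$ is disjoint from every member of $\hf(\overline{x_1},x_2)$.

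\emph{Step 1 (the forbidden triple).} Suppose there are $A\in \hf(x_1,\overline{x_2})$ and $B\in \hf(\overline{x_1},x_2)$ with $A\cap B\neq\emptyset$. Then $F_1=A\cup\{x_1\}$ and $F_2=B\cup\{x_2\}$ are in $\hf$, and $A,B\subset [n]\setminus\{x_1,x_2\}$. Take any $G\in\hf(x_1,x_2)$ and set $F_3=G\cup\{x_1,x_2\}\in\hf$. The three pairwise intersections are nonempty:
\begin{itemize}
\item $F_1\cap F_2\supset A\cap B\neq\emptyset$;
\item $F_1\cap F_3\ni x_1$;
\item $F_2\cap F_3\ni x_2$.
\end{itemize}
Since $x_2\notin F_1$ and $x_1\notin F_2$, the total intersection is
\[
F_1\cap F_2\cap F_3=A\cap B\cap G .
\]
Triangle-freeness therefore forces $G\cap (A\cap B)\neq\emptyset$ for every $G\in\hf(x_1,x_2)$.

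\emph{Step 2 (counting).} Every member of $\hf(x_1,x_2)$ is a $(k-2)$-subset of $[n]\setminus\{x_1,x_2\}$ that meets $A\cap B$. Put $w=|A\cap B|$, so $1\le w\le k-1$. Then
\[
|\hf(x_1,x_2)|\le \binom{n-2}{k-2}-\binom{n-2-w}{k-2}\le \binom{n-2}{k-2}-\binom{n-k-1}{k-2},
\]
because $\binom{n-2-w}{k-2}$ is non-increasing in $w$.

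\emph{Step 3 (second inequality).} Telescoping Pascal's rule gives
\[
\binom{n-2}{k-2}-\binom{n-k-1}{k-2}=\sum_{i=1}^{k-1}\binom{n-2-i}{k-3}.
\]
Each of the $k-1$ terms is at most $\binom{n-3}{k-3}$, which yields the bound $(k-1)\binom{n-3}{k-3}$.

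There is no real obstacle here. The only point needing care is verifying that the triple intersection reduces exactly to $A\cap B\cap G$, which uses $x_2\notin A$ and $x_1\notin B$.
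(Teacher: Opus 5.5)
Your proof is correct and follows the paper's argument: if some $F_1\in\hf$ with $F_1\cap\{x_1,x_2\}=\{x_1\}$ meets some $F_2\in\hf$ with $F_2\cap\{x_1,x_2\}=\{x_2\}$, then triangle-freeness makes $F_1\cap F_2$ (of size between $1$ and $k-1$) a transversal of $\hf(x_1,x_2)$, which gives the first bound. Your telescoping argument for the inequality $\binom{n-2}{k-2}-\binom{n-k-1}{k-2}\le (k-1)\binom{n-3}{k-3}$ fills in a step the paper leaves implicit. So does your reading of ``vertex-disjoint'' as pairwise disjointness of members, which is equivalent to disjointness of the unions, the form used later.
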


\begin{proof}
Arguing indirectly we may fix $F_1,F_2\in \hf$ with $F_i\cap \{x_1,x_2\}=\{x_i\}$, $i=1,2$,   $1\leq |F_1\cap F_2|\leq k-1$. Note that $F_1\cap F_2\cap\{x_1,x_2\}=\emptyset$ and the triangle-free property imply that  $F_1\cap F_2$ is a transversal of $\hf(x_1,x_2)$. Hence,
\[
|\hf(x_1,x_2)| \leq \binom{n-2}{k-2} -\binom{n-2-|F_1\cap F_2|}{k-2} \leq \binom{n-2}{k-2}
 -\binom{n-k-1}{k-2}.
 \]
\end{proof}

\begin{lem}\label{lem-5.1}
Let $[n]=X_1\cup X_2\cup \ldots\cup X_s$ be a partition of $[n]$ into non-empty subsets. If $\hf\subset \binom{[n]}{k}$ is a family satisfying $\hf=\cup_{1\leq i\leq s} \hf\cap \binom{X_i}{k}$ and $\hf\cap \binom{X_i}{k}$ is an intersecting family for each $i=1,2,\ldots,s$, then for $k\geq 3$
\begin{align}\label{ineq-5.2}
|\hf|\leq  \binom{n-s+1}{k-1}.
\end{align}
\end{lem}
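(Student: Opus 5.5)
The plan is to bound each part separately and then combine the bounds by induction on $s$. Write $x_i=|X_i|$, so that $\sum_i x_i=n$ and $n-s+1=1+\sum_{i}(x_i-1)$. Put $\hf_i=\hf\cap\binom{X_i}{k}$. Then $|\hf|=\sum_i|\hf_i|$, because the $X_i$ are disjoint and so a $k$-set lies inside at most one of them.

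\emph{Step 1 (one part).} I will show that $|\hf_i|\le\binom{x_i}{k-1}$ for every $i$, by cases on $x_i$.
\begin{itemize}
\item If $x_i<k$, then $\hf_i=\emptyset$.
\item If $k\le x_i\le 2k-1$, the trivial bound gives $|\hf_i|\le\binom{x_i}{k}$. Since $\binom{x_i}{k}/\binom{x_i}{k-1}=(x_i-k+1)/k\le 1$ in this range, we get $|\hf_i|\le\binom{x_i}{k-1}$.
\item If $x_i\ge 2k$, Theorem~\ref{thm-ekr} applied to the intersecting family $\hf_i$ on the ground set $X_i$ gives $|\hf_i|\le\binom{x_i-1}{k-1}\le\binom{x_i}{k-1}$.
\end{itemize}

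\emph{Step 2 (induction on $s$).} I will prove
\[
\sum_{i\le s}|\hf_i|\le\binom{1+\sum_{i\le s}(x_i-1)}{k-1}.
\]
For $s=1$ this is exactly Step 1.

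For the inductive step, set $m=\sum_{i<s}(x_i-1)\ge 0$ and $x=x_s$. It suffices to show
\[
\binom{m+1}{k-1}+\binom{x}{k-1}\le\binom{m+x}{k-1}.
\]
Telescoping with Pascal's rule gives
\[
\binom{m+x}{k-1}-\binom{m+1}{k-1}=\sum_{j=m+1}^{m+x-1}\binom{j}{k-2}.
\]
This sum has $x-1$ terms. Since $\binom{j}{k-2}$ is nondecreasing in $j$, it is at least
\[
\sum_{j=1}^{x-1}\binom{j}{k-2}=\binom{x}{k-1}-\binom{1}{k-1}.
\]
Here we use $k\ge3$, so that $k-1\ge2$ and hence $\binom{1}{k-1}=0$. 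The combination step is then complete.

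\emph{Main obstacle.} The only delicate point is that EKR is unavailable when $k\le x_i<2k$: there $\hf_i$ may be all of $\binom{X_i}{k}$, which is larger than $\binom{x_i-1}{k-1}$. I handle this by using the weaker per-part bound $\binom{x_i}{k-1}$ instead. The superadditivity in Step 2 still absorbs this loss, because each merge subtracts one from the top index, and this is paid for by the vanishing of $\binom{1}{k-1}$ when $k\ge3$.
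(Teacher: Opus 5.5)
Your proof is correct and follows the paper's argument. You use the same per-part bound $|\hf_i|\le\binom{|X_i|}{k-1}$ (EKR when $|X_i|\ge 2k$, the trivial bound otherwise) and then iterate the merging inequality $\binom{a}{k-1}+\binom{b}{k-1}\le\binom{a+b-1}{k-1}$. The only difference is cosmetic: you prove the merging inequality by Pascal telescoping and monotonicity, using $\binom{1}{k-1}=0$ for $k\ge3$, whereas the paper counts $(k-1)$-subsets of two sets meeting in $k-2$ points. Your version also covers the case $a<k-1$ or $b<k-1$ uniformly, without the paper's separate remark.
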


\begin{proof}
Let $\hf_i= \hf\cap \binom{X_i}{k}$.
Note that if $|X_i|\geq 2k$ then by the Erd\H{o}s-Ko-Rado theorem $|\hf_i|\leq \binom{|X_i|-1}{k-1}\leq \binom{|X_i|}{k-1}$. If $|X_i|< 2k$ then $|\hf_i|\leq \binom{|X_i|}{k}\leq \binom{|X_i|}{k-1}$. Thus,
\[
|\hf|\leq \sum_{1\leq i\leq s} \binom{|X_i|}{k-1}.
\]
Let us consider an $a$-set $A$ and a $b$-set $B$, $a,b\geq k-1$ and $|A\cap B|=k-2$. Then $\binom{A}{k-1}\cap \binom{B}{k-1}=\emptyset$ implies
\begin{align}\label{ineq-easy}
\binom{a}{k-1}+\binom{b}{k-1}=\left|\binom{A}{k-1}\right|+\left|\binom{B}{k-1}\right|&\leq \left|\binom{A\cup B}{k-1}\right|\nonumber\\[3pt]
&=\binom{a+b-(k-2)}{k-1}\leq \binom{a+b-1}{k-1}.
\end{align}
If $1\leq a<k-1$ or  $1\leq b<k-1$,   then $\binom{a}{k-1}+\binom{b}{k-1}\leq \binom{a+b-1}{k-1}$ holds as well. Using \eqref{ineq-easy}, \eqref{ineq-5.2} follows.
\end{proof}

\section{Proof of Lemma \ref{lem-key1}}

We need the following inequalities, whose proofs are deferred to Section 6.

\begin{lem}\label{lem-3.1}
For $k\ge5$ and $n\ge2k+1$, set
\[
t=t(n,k):=\min\left\{k+2,\left\lfloor\frac{n-1}{2}\right\rfloor\right\}.
\]
Then the following equalities hold:
\begin{align}
 k|\hht(n,k)|-\binom n{k-1}
 &>(n-1)\left(
 \binom{n-t-2}{k-2}+\binom{t-1}{k-2}
 \right),                                                   \label{eq:pi-concentration}\\
 \binom{n-k-4}{k-3}&\ge \binom{t-1}{k-1},                                 \label{eq:pi-rigidity}\\
 \binom{n-3}{k-2}-\binom{n-k-2}{k-2}
 &>n+\binom{t-1}{k-1}.                                                     \label{eq:pi-global-gap}
\end{align}
\end{lem}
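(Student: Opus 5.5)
The three inequalities are purely numerical, so I will prove them by direct estimation. The natural split is according to which term attains $t$. When $2k+1\le n\le 2k+4$ we have $t=\lfloor (n-1)/2\rfloor\in\{k,k+1\}$. When $n\ge 2k+5$ we have $t=k+2$. In the second regime $\binom{t-1}{k-2}=\binom{k+1}{3}$ and $\binom{t-1}{k-1}=\binom{k+1}{2}$ are fixed polynomials in $k$. In the first regime $\binom{t-1}{k-1}\le k$ and $\binom{t-1}{k-2}\le\binom{k}{2}$. Throughout I will use Pascal's rule and the elementary ratio bounds $\binom{m-1}{j}/\binom{m}{j}=(m-j)/m$, together with \eqref{ineq-key0}.

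For \eqref{eq:pi-rigidity}, take first $n\ge 2k+5$. Then $\binom{n-k-4}{k-3}\ge\binom{k+1}{k-3}=\binom{k+1}{4}$, and this is at least $\binom{k+1}{2}$ as soon as $(k-2)(k-3)\ge 12$, which holds for $k\ge 6$. The case $k=5$ needs a separate direct check using $n-k-4\ge k+1$. In the small regime the left side is $\binom{n-k-4}{k-3}$ with $n-k-4\ge k-3$, so it is at least $1$. This is not enough when $t-1=k$, where the right side equals $k$. So I will tabulate the four values $n=2k+1,\dots,2k+4$ separately. For $n=2k+1$ and $n=2k+2$ we get $t=k$, so $\binom{t-1}{k-1}=1$. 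For $n=2k+3$ and $n=2k+4$ we get $t=k+1$, so the right side is $k$ while the left side is $\binom{k-1}{k-3}$ or $\binom{k}{k-3}$, and these exceed $k$ for $k\ge5$.

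For \eqref{eq:pi-global-gap}, I write
\[
\binom{n-3}{k-2}-\binom{n-k-2}{k-2}=\sum_{j=1}^{k-1}\binom{n-3-j}{k-3}\ge (k-1)\binom{n-k-2}{k-3}.
\]
Since $n-k-2\ge k-1$, this is already a polynomial in $n$ of degree $k-3\ge 2$. I will then check that it beats $n+\binom{k+1}{2}$, first at $n=2k+1$ and then for all larger $n$ by monotonicity.

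The main obstacle is \eqref{eq:pi-concentration}. Here I use $|\hht(n,k)|=1+\binom{n-k-1}{k-1}+\binom{n-2}{k-2}$ together with $k\binom{n-2}{k-2}=(n-1)\binom{n-2}{k-2}\cdot\frac{k}{n-1}$, and I compare the main terms. The right side is roughly $(n-1)\binom{n-k-4}{k-2}$, while the left side contains $k\binom{n-k-1}{k-1}=(n-k)\binom{n-k-1}{k-2}$. Subtracting $\binom{n}{k-1}$ is the dangerous term. The plan is to write
\[
\binom{n}{k-1}=\frac{n}{k-1}\binom{n-1}{k-2}
\]
and use $k\binom{n-2}{k-2}+k\binom{n-k-1}{k-1}$ against it, expanding $\binom{n-1}{k-2}=\binom{n-2}{k-2}+\binom{n-2}{k-3}$. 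I expect to reduce the claim to an inequality of the form $\binom{n-k-1}{k-1}\cdot k\ge(n-1)\binom{n-k-4}{k-2}+\text{(lower order)}$, which follows from $\binom{n-k-1}{k-1}=\frac{n-k-1}{k-1}\binom{n-k-2}{k-2}$ and $\binom{n-k-2}{k-2}\ge\binom{n-k-4}{k-2}$ with room to spare.

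The genuinely delicate part is near $n=2k+1$, where $\binom{n}{k-1}$ is comparable to $k\binom{n-2}{k-2}$. There I will verify the few small cases $n\in\{2k+1,\dots,2k+4\}$ by explicit ratio computation in $k$, and then treat $n\ge 2k+5$ by showing that the ratio of the two sides is monotone in $n$.
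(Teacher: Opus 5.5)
Your arguments for \eqref{eq:pi-rigidity} and \eqref{eq:pi-global-gap} are essentially sound. The plan for \eqref{eq:pi-concentration}, which is the real content of the lemma, has a genuine gap: it misjudges the asymptotics, and the estimate you propose turns the inequality into a false one.

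Expand everything in $\binom{m}{j}$ with $m=n-k-4$, as the paper does. The coefficient of $\binom{m}{k-1}$ is $k-1$ on the left, coming from $k\binom{m+3}{k-1}-\binom{m+k+4}{k-1}$. It is also $k-1$ on the right, since $(m+k+3)\binom{m}{k-2}=(k-1)\binom{m}{k-1}+(2k+1)\binom{m}{k-2}$. So it cancels exactly. The coefficient of $\binom{m}{k-2}$ in the difference is $k+3k-(k+4)-(2k+1)=k-5$.

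Consequently the surplus of $k\binom{n-k-1}{k-1}$ over $(n-1)\binom{n-k-4}{k-2}$ is used up, to leading order, by $\binom{n}{k-1}$. Your ``(lower order)'' term has the same order $n^{k-1}$ as that surplus, the ratio of the two sides tends to $1$, and the true margin is only $(k-5)\binom{m}{k-2}+O(n^{k-3})$. For $k=5$ the difference is exactly $7n^2-127n+470$.

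There is therefore no room for $\binom{n-k-2}{k-2}\ge\binom{n-k-4}{k-2}$. That step discards $\frac{k(n-k-1)}{k-1}\bigl(\binom{n-k-3}{k-3}+\binom{n-k-4}{k-3}\bigr)\approx\frac{2k(k-2)}{k-1}\cdot\frac{n^{k-2}}{(k-2)!}$. This exceeds the margin $\approx(k-5)\frac{n^{k-2}}{(k-2)!}$ for every $k$ once $n$ is large. For instance, at $k=6$, $n=100$ the two sides differ by $5{,}246{,}589$, while that single estimate loses $26{,}668{,}494$. Separately, $k\binom{n-k-1}{k-1}\neq(n-k)\binom{n-k-1}{k-2}$; the identity you use afterwards is the correct one.

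Your fallback of showing the ratio of the two sides is monotone in $n$ does not rescue this. The ratio tends to $1$ from above, so it cannot be nondecreasing, and a ``decreasing to $1$'' argument needs exactly the second-order information above. For $k=5$ the ratio is not monotone at all: $700/560=1.25$, $1055/825\approx1.279$, $1550/1216\approx1.275$ at $n=15,16,17$. So the tight regime is large $n$, not just $n$ near $2k+1$.

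What is missing is exact bookkeeping below leading order. One way is the paper's expansion $\sum_j a_j\binom{m}{j}$, with:
\begin{itemize}
\item $a_{k-1}=0$ and $a_{k-2}=k-5\ge0$;
\item positive middle coefficients;
\item the negative constant term absorbed by $a_1m$ for $k=6,7,8$;
\item $k=5$ reduced to the quadratic above.
\end{itemize}

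Two minor slips elsewhere. First, $\binom{k+1}{4}/\binom{k+1}{2}=(k-1)(k-2)/12$, so $k=5$ needs no separate check in \eqref{eq:pi-rigidity}. Second, in \eqref{eq:pi-global-gap} your bound $(k-1)\binom{n-k-2}{k-3}$ gives $24<26=n+\binom{k+1}{2}$ at $k=5$, $n=11$. You must use the regime value $\binom{t-1}{k-1}=1$ there. With that fix your telescoping argument for \eqref{eq:pi-global-gap} works. It is in fact more careful than the paper's, whose ``equivalent form'' replaces $\binom{n-3}{k-2}$ by $\binom{n-2}{k-2}$.
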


\begin{lem}\label{lem-4.2}
Let $k\ge5$, $n\ge 2k+1$, and let
$\hf\subseteq\binom{[n]}k$ be a non-trivial Helly family.  If
$|\hf|>|\hht(n,k)|$, then there exist some  $x,c\in [n]$ such that
\begin{align}
|\hf(x,\bar c)| \leq  \binom{t-1}{k-1}.
 \label{eq-h-bound}
\end{align}
where $t=\min\{k+2,\lfloor (n-1)/2\rfloor\}$.
\end{lem}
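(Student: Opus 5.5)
The plan is to use the machinery of Section 2: the sets $\hs_x,\hl_x$, the quantity $M$, the component structure of $G_x$ from Lemma \ref{lem-MV}, and the counting lemma \eqref{ineq-2.3}. Since $|\hf|>|\hht(n,k)|$, inequality \eqref{ineq-2.1} together with \eqref{eq:pi-concentration} gives a vertex $x$ with
\[
|\hl_x| = M > \binom{n-t-2}{k-2}+\binom{t-1}{k-2}.
\]
A Helly family is triangle-free, so Lemma \ref{lem-MV} applies at $x$. Each component $\hk$ of $G_x$ is intersecting, and its vertex sets $V(\hk)$ are pairwise disjoint subsets of $[n]\setminus\{x\}$. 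Moreover, an intersecting family of $(k-1)$-sets $\hk\subset \hl_x$ with $\hf$ Helly forces a common element, with one caveat. The subfamily $\{F\in\hf: F\setminus\{x\}\in\hk\}$ is intersecting (through $x$), so this alone gives nothing. Instead I would use that members of $\hl_x$ lie in at least two edges: for $G\in\hl_x$ the $(k-1)$-set $G$ is contained in two edges $\{x\}\cup G$ and $G\cup\{y\}$. Taking the family $\{G\cup\{y_G\}\}$ together with the $x$-edges, the whole collection is intersecting, and the Helly property forces a common vertex inside $V(\hk)$. Thus each component $\hk$ is a star inside $V(\hk)$, so $|\hk|\le\binom{v_i-1}{k-2}$ with $v_i=|V(\hk)|$ and $\sum v_i\le n-1$.

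Comparing with \eqref{ineq-2.3}, the bound $|\hl_x|>\binom{n-t-2}{k-2}+\binom{t-1}{k-2}$ is impossible unless some component has $v_1=|V(\hk_1)|\ge n-t$. Fix this component, with center $c$. Then every $G\in\hf(x)$ outside $V(\hk_1)$ lies in a set of at most $t-1$ vertices. More precisely, consider any $G\in\hf(x,\bar c)$. If $G\in\hl_x$ it belongs to a different component, hence avoids $V(\hk_1)$ and lies in the complement, of size at most $t-1$. If $G\in\hs_x$ meets $V(\hk_1)$, it lies in $\hk_1^+$, which is intersecting by Lemma \ref{lem-MV}, and the Helly argument above extends to show $c\in G$, a contradiction. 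So $G\in\hs_x$ also avoids $V(\hk_1)$. Hence $\hf(x,\bar c)\subset\binom{[n]\setminus(\{x\}\cup V(\hk_1))}{k-1}$, a ground set of size at most $t-1$, which yields \eqref{eq-h-bound}.

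The main obstacle is the Helly step: showing that $\hk_1$ (and then $\hk_1^+$) has a common element, and that this element can be taken inside $V(\hk_1)$ rather than being $x$. I would try first to pick, for each $G\in\hk_1$, a second edge $G\cup\{y_G\}$ with $y_G\ne x$. I would then show that these edges, plus $\{x\}\cup G'$ for all $G'\in\hk_1^+$, form an intersecting subfamily of $\hf$. The pairs $\{x\}\cup G,\ G'\cup\{y\}$ with $G\cap G'=\emptyset$ are excluded, since $\hk_1^+$ is intersecting. Its total intersection then avoids $x$ and lies in every $G\in\hk_1^+$, giving $c$. If some edge of the form $G\cup\{y_G\}$ fails to meet another because of degenerate small cases, I would fall back on Fact \ref{fact-2.1} to derive the contradiction.
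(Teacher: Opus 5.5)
Your proof is correct and follows the paper's argument. You choose $x$ maximizing $|\hl_x|$, use \eqref{ineq-2.1}, \eqref{eq:pi-concentration} and \eqref{ineq-2.3} to find a component whose support misses at most $t-1$ vertices of $[n]\setminus\{x\}$, and then confine $\hf(x,\bar c)$ to that complement. Your device of adjoining the second edges $G\cup\{y_G\}$, $y_G\neq x$, supplies exactly the justification the paper leaves implicit when it asserts that $\hk^+$ is a star with center in $V(\hk)$.

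There are no degenerate cases to fall back on, since every required intersection follows directly from $\hk_1^+$ being intersecting. Take $c$ to be the common element obtained from all of $\hk_1^+$ at once, as in your last paragraph, rather than a center of $\hk_1$ alone, because the latter need not be unique.
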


\begin{proof}
Let $M:=\max_{x\in[n]}|\hl_x|$
and choose $x$ with $|\hl_x|=M$.
Since $|\hf|>|\hht(n,k)|$, by \eqref{ineq-2.1} and \eqref{eq:pi-concentration},
\begin{equation}
 |\hl_x|>\frac{k|\hht(n,k)|-\binom n{k-1}}{n-1}>\binom{n-t-2}{k-2}+\binom{t-1}{k-2}.
 \label{eq-M-lower}
\end{equation}

Let $v_1,\ldots,v_s$ be the sizes of the supports of the components of of $G_x$.  By Lemma~\ref{lem-MV} and  the Helly property, we infer that
\[
 |\hl_x|\leq\sum_{i=1}^s\binom{v_i-1}{k-2},
 \qquad
 \sum_{i=1}^s v_i\leq n-1.
\]
If $v_i\leq n-1-t$ for all $i$, then
\[
 |\hl_x|\leq\binom{n-t-2}{k-2}+\binom{t-1}{k-2},
\]
which contradicts \eqref{eq-M-lower}.  Thus some
component $\hk$ of $G_x$ has support $V=V(\hk)$ satisfying
\begin{equation}
 |([n]\setminus\{x\})\setminus V|\leq t-1.
 \label{eq-large-support}
\end{equation}

By Lemma~\ref{lem-MV} and the Helly property, $\hk^+$ is a star. Let $c$ be the center of this star.  Then every $E\in \hf(x)$ with $c\notin E$ satisfies $E\cap V=\emptyset$. Then \eqref{eq-h-bound} follows from
 \eqref{eq-large-support}.
\end{proof}

\begin{proof}[Proof of Lemma \ref{lem-key1}]
By Lemma \ref{lem-4.2}, there exist $x,c\in \hf$ such that
\[
|\hf(x,\bar c)| \leq  \binom{t-1}{k-1}.
\]
Assume for contradiction that
\begin{align}\label{ineq-assumption}
 |\hf(x)|>
 \binom{n-k-2}{k-2}+\binom{n-3}{k-3}.
\end{align}

We distinguish two cases.

{\bf Case 1. }$\hf(\bar x,\bar c)\neq \emptyset$.

Let $B\in \hf(\bar x,\bar c)$ and let $z_B$ be the local center of $B$ with respect to $c$.
Hence,
\[
 \hf(x,c)\subseteq\hu_B:=
 \binom{[n]\setminus (B\cup \{x,c\})}{k-2}
 \bigcup
 \left\{Q\in\tbinom{[n]\setminus \{x,c\}}{k-2}\colon z_B\in Q\right\}.
\]
The two parts of $\hu_B$ are disjoint and
\[
 |\hu_B|=\binom{n-k-2}{k-2}+\binom{n-3}{k-3}.
\]
Since by our assumption \eqref{ineq-assumption} we have
$|\hf(x)|=| \hf(x,c)|+| \hf(x,\bar{c})|>|\hu_B|$, it follows that
\begin{equation}
 |\hu_B\setminus\hf(x,c)|= |\hu_B|-|\hf(x,c)|<|\hf(x,\bar{c})|\leq\binom{t-1}{k-1}.
 \label{eq-rh}
\end{equation}

For any $F\in\hf(\bar x,\bar c)\setminus \{B\}$, let $z_F$ be the local center of $F$ with respect to $c$.
Consequently,
\[
 \hr(F):=\{Q\in\hu_B\colon Q\cap F\neq\emptyset,\ z_F\notin Q\}
 \subseteq\hu_B\setminus\hf(x,c).
\]

\begin{claim}
$\hf(\bar x,\bar c)$ forms a sunflower with center $B\setminus \{z_B\}$. That is, there is some $Z\subset [n]\setminus (B\cup \{x,c\})$ such that
\begin{equation}
 \hf(\bar x,\bar c)
 =\{B\}\cup
 \{F_z=(B\setminus\{z_B\})\cup\{z\}:z\in Z\}
 \label{eq-clones}
\end{equation}
Moreover, the local center of $F_z$ with respect to $c$ is $z$.
\end{claim}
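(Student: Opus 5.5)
We use the smallness of $\hu_B\setminus\hf(x,c)$ from \eqref{eq-rh} together with the sets $\hr(F)$. For each $F\in\hf(\bar x,\bar c)\setminus\{B\}$ we have $\hr(F)\subseteq\hu_B\setminus\hf(x,c)$, so $|\hr(F)|<\binom{t-1}{k-1}$. The plan is to show that $|\hr(F)|$ is at least $\binom{n-k-4}{k-3}$ unless $F$ has the special form $(B\setminus\{z_B\})\cup\{z_F\}$ with $z_F\notin B$. Then \eqref{eq:pi-rigidity}, namely $\binom{n-k-4}{k-3}\ge\binom{t-1}{k-1}$, gives a contradiction for every $F$ of any other form.

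Fix $F\neq B$ in $\hf(\bar x,\bar c)$ and split according to where $z_F$ and the rest of $F$ lie. First suppose $F\setminus B$ contains an element $w\neq z_F$. Then $w\notin B\cup\{x,c\}$, and every $(k-2)$-set $Q$ in the first part $\binom{[n]\setminus(B\cup\{x,c\})}{k-2}$ with $w\in Q$ and $z_F\notin Q$ lies in $\hr(F)$. There are at least $\binom{n-k-4}{k-3}$ of these: we choose the remaining $k-3$ elements from $[n]\setminus(B\cup\{x,c,w,z_F\})$, a set of size at least $n-k-4$. This is a contradiction, so $F\setminus B\subseteq\{z_F\}$, i.e. $|F\setminus B|\le 1$. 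Since $F\neq B$, this forces $|F\setminus B|=1$ and $F\setminus B=\{z_F\}$, so $z_F\notin B$.

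Now write $F=(B\setminus\{b\})\cup\{z_F\}$ and show $b=z_B$. If $b\neq z_B$, then $z_B\in F$ and $z_B\neq z_F$. Every $Q$ in the second part of $\hu_B$ (those with $z_B\in Q$) that avoids $z_F$ meets $F$ at $z_B$, hence lies in $\hr(F)$. There are at least $\binom{n-4}{k-3}\ge\binom{n-k-4}{k-3}$ such $Q$, again a contradiction. Therefore every $F\neq B$ equals $(B\setminus\{z_B\})\cup\{z\}$ with local center $z=z_F\notin B\cup\{x,c\}$. Setting $Z$ to be the set of these $z$ gives \eqref{eq-clones}; distinct $F$ have distinct $z$, so the indexing by $z\in Z$ is well defined.

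The main obstacle is to make sure the lower bounds on $|\hr(F)|$ are honest. One must check that the counted $Q$ really avoid $x$ and $c$ and lie in the correct part of $\hu_B$. One must also handle the boundary case $z_F\in B$ in the first step: there $w$ ranges over $F\setminus B$, which is nonempty because $F\neq B$, and the same count applies. Finally, \eqref{eq-rh} is strict, which gives $|\hr(F)|<\binom{t-1}{k-1}$ and hence the contradiction against \eqref{eq:pi-rigidity}.
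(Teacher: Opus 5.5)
Your proof is correct and follows the paper's argument: you bound the same sets $\hr(F)\subseteq\hu_B\setminus\hf(x,c)$ from below by $\binom{n-k-4}{k-3}$ and play this against \eqref{eq-rh} and \eqref{eq:pi-rigidity}. Your two-step split merges the paper's four cases, which are organized by where $z_F$ lies and whether $z_B\in F$. In the first step you use an element $w\in F\setminus(B\cup\{z_F\})$ to force $F\setminus B=\{z_F\}$. In the second step you use the sets through $z_B$ that avoid $z_F$ to force the deleted element to be $z_B$.
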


\begin{proof}
Let  $F\in \hf(\bar{x},\bar{c})\setminus \{B\}$. The following cases exhaust
all possibilities.

\begin{enumerate}
\item If $z_F=z_B$, then there is some $y\in F\setminus B$. The number of sets in $\tbinom{[n]\setminus (B\cup \{x,c\})}{k-2}$ containing $y$ is $\binom{n-k-3}{k-3}$, which implies $|\hr(F)|\geq \binom{n-k-3}{k-3}$.

\item If $z_F\in B\setminus\{z_B\}$ and $z_B\in F$, then the number of $(k-2)$-sets $E\subset [n]\setminus \{x,c\}$ with $E\cap F=\{z_B\}$ is $\binom{n-k-2}{k-3}$, which implies $|\hr(F)|\geq \binom{n-k-2}{k-3}$.   If $z_F\in B\setminus\{z_B\}$ and
$z_B\notin F$, then let $y\in F\setminus B$ and the number of sets in $\tbinom{[n]\setminus (B\cup \{x,c\})}{k-2}$ containing $y$ is $\binom{n-k-3}{k-3}$, which implies $|\hr(F)|\geq \binom{n-k-3}{k-3}$.

\item If $z_F\in [n]\setminus (B\cup \{x,c\})$ and $z_B\in F$, then the number of $(k-2)$-sets $E\subset [n]\setminus \{x,c\}$ with $E\cap F=\{z_B\}$ is $\binom{n-k-2}{k-3}$, which implies $|\hr(F)|\geq \binom{n-k-3}{k-3}$.

\item Suppose that $z_F\in[n]\setminus(B\cup\{x,c\})$
 and $z_B\notin F$.
Consider
\[
 \left\{Q\in
 \binom{[n]\setminus(B\cup\{x,c,z_F\})}{k-2}\colon  Q\cap\bigl(F\setminus (B\cup \{z\})\bigr)
 \neq\emptyset \right\}.
\]
There are $\binom{n-k-3}{k-2}-
 \binom{n-k-2-|F\setminus B|}{k-2}$
such sets.  Indeed, the first term counts all $(k-2)$-sets disjoint
from $B\cup \{x,c,z_F\}$, while the second term counts those
which are also disjoint from
$F\setminus (B\cup\{z_F\})$.

If $|F\setminus B|\geq2$, choose $w\in F\setminus (B\cup\{z_F\})$, then every set
\[
 Q\in
 \binom{[n]\setminus(B\cup\{x,c,z_F\})}{k-2}
 \quad\text{with}\quad w\in Q
\]
belongs to $\hr(F)$. Consequently,
\[
 |\hr(F)|\geq\binom{n-k-4}{k-3}.
\]

It remains to consider $|F\setminus B|=1$.  Since $z_F\in F\setminus B$,
we then have $F\setminus B=\{z_F\}$.
Because $|F|=|B|=k$, the set $F$ is obtained from $B$ by deleting
exactly one element and adding $z_F$.  Since $z_B\in B$ but $z_B\notin F$, the
deleted element must be $z_B$.  Hence,
\[
 F=(B\setminus\{z_B\})\cup\{z_F\}.
\]
\end{enumerate}

Thus unless $F=(B\setminus\{z_B\})\cup\{z_F\}$ holds for
 every $F\in \hf(\bar{x},\bar{c})\setminus \{B\}$, by \eqref{eq:pi-rigidity} we have
\[
|\hu_B\setminus\hf(x,c)|\geq |\hr(F)| \geq\binom{n-k-4}{k-3}
 \geq\binom{t-1}{k-1},
\]
contradicting
\eqref{eq-rh}. Thus,
\begin{equation}
 \hf(\bar x,\bar c)
 =\{B\}\cup
 \{F_z=(B\setminus\{z_B\})\cup\{z\}:z\in Z\}
 \label{eq-clones}
\end{equation}
for some $Z\subseteq[n]\setminus(B\cup\{x,c\})$. Moreover, the local center
of each $F_z$ is $z$.
\end{proof}

For $z\in Z$, there are
$\binom{n-k-3}{k-3}$ $(k-2)$-sets in $[n]\setminus \{x,c\}$ that contain $z$ and are disjoint from $B$.  Since $|\hu_B\setminus \hf(x,c)|<\binom{t-1}{k-1}<\binom{n-k-3}{k-3}$, in view of \eqref{eq-rh} and \eqref{eq:pi-rigidity}, at least one of these $\binom{n-k-3}{k-3}$ sets belongs to
$\hf(x,c)$.  That is, for each $F_z\in \hf(\bar{x},\bar{c})\setminus \{B\}$ there exists $E_z\in \hf(x,c)$ such that $E_z\cap F_z=\{z\}$.
Since $z$ is the local center of $F_z$ with respect to $c$, we infer that
\begin{equation}
 E\in\hf(c),\quad E\cap F_z\neq\emptyset
 \quad\Longrightarrow\quad z\in E.
 \label{eq-global-clone-center}
\end{equation}
Similarly, among the
$\binom{n-k-2}{k-3}>\binom{t-1}{k-1}$ $(k-2)$-sets $E$ in  $[n]\setminus \{x,c\}$ with $E\cap B=\{z_B\}$, at least one belongs to $\hf(x,c)$. Note that $\hf(c)\subseteq \hht_{c,B}$
and $|\hht_{c,B}|=|\hht(n,k)|-1$.
Let $R_c:=|\hht_{c,B}\setminus\hf(c)|$. Then
\begin{align*}
|\hf|&=|\hf (c)|+|\hf(x,\bar{c})|+|\hf(\bar{x},\bar{c})|\\[3pt]
&= (|\hht_{c,B}|-R_c) +|\hf(x,\bar{c})|+(|Z|+1) \\[3pt]
&=|\hht(n,k)| +|\hf(x,\bar{c})|+|Z|-R_c.
\end{align*}
By $|\hf|>|\hht(n,k)|$,
\begin{align}\label{ineq-leftcase}
|\hf(x,\bar{c})|+|Z|> R_c.
\end{align}

 If $Z\neq\emptyset$, fix
$z\in Z$.  By \eqref{eq-global-clone-center}, every member of
\[
\left\{E\in \binom{[n]\setminus \{c\}}{k-1}\colon z_B\in E,\  z\notin E,\ E\cap (B\setminus\{z_B\})\neq \emptyset\right\}
\] is
in  $\hht_{c,B}^{(1)}\setminus \hf(c)$.  There are exactly $\binom{n-3}{k-2}-\binom{n-k-2}{k-2}$
such members.  Hence, by
\eqref{eq:pi-global-gap}, \eqref{eq-h-bound}, and
$|Z|\leq n-k-2<n$,
\[
 R_c\geq
 \binom{n-3}{k-2}-\binom{n-k-2}{k-2}
 >n+\binom{t-1}{k-1}>|Z|+|\hf(x,\bar c)|,
\]
contradicting \eqref{ineq-leftcase}.

It remains to consider the case $Z=\emptyset$.  Then by \eqref{ineq-leftcase} we have $|\hf(x,\bar{c})|> R_c$.  Consider the sets in $\hf[x,\bar c]$.  For each  $F\in \hf[x,\bar c]$, let $z_F$ be the local center of $F$ with respect to $c$.
If $z_F\neq z_B$, then all $(k-1)$-sets $E$ with $E\cap \{z_B,z_F\}=\{z_B\}$ and $E\cap F\neq \emptyset$ are in $\hht_{c,B}^{(1)}\setminus \hf(c)$.  Their number
is at least $\binom{n-3}{k-2}-\binom{n-k-2}{k-2}$.
Thus,
\[
 R_c\geq\binom{n-3}{k-2}-\binom{n-k-2}{k-2}>|\hf[x,\bar{c}]|,
\]
contradicting \eqref{ineq-leftcase}.

We may therefore assume that all sets $F$ in $\hf[x,\bar{c}]$ satisfy $z_F=z_B$.  If $\hf(c,\bar{B})\neq \emptyset$, then by applying
Claim~\ref{claim-center-payment} with $x=c$ and $\hd=\hf[x,\bar{c}]$ (noting that $B\in \hf(\bar{x},\bar{c})$ implies $B\notin\hf[x,\bar{c}]$), we obtain that
$R_c\geq |\hht_{c,B}^{(2)}\setminus\hf(c)| \geq |\hf[x,\bar{c}]|$.  If all sets in $\hf(c)$ intersect $B$, then $\hht_{c,B}^{(2)} \subset \hht_{c,B}\setminus \hf(c)$. It follows that
\[
R_c\geq \binom{n-k-1}{k-1}>\binom{t-1}{k-1}\geq |\hf[x, \bar{c}]|,
\]
contradicting \eqref{ineq-leftcase} again.

{\bf Case 2. } $\hf(\bar x,\bar c)=\emptyset$.

Since $\hf$ is non-trivial, $\hf[x,\bar{c}]\neq \emptyset$. Choose $B\in\hf[x,\bar c]$ and let $z_B$ be the local center of $B$ with respect to $c$.  Define $R_c:=|\hht_{c,B}\setminus\hf(c)|$. Then
\begin{align*}
|\hf|&=|\hf (c)|+|\hf(x,\bar{c})|= (|\hht_{c,B}|-R_c) +|\hf(x,\bar{c})| =|\hht(n,k)|-1 +|\hf(x,\bar{c})|-R_c.
\end{align*}
By $|\hf|>|\hht(n,k)|$,
\begin{align}\label{ineq-leftcase2}
|\hf(x,\bar{c})|\geq  R_c.
\end{align}
For any $F\in \hf[x,\bar{c}]$, let $z_F$ be the local center of $F$ with respect to $c$.
If there is some $F\in \hf[x,\bar{c}]$ such that $z_F\neq z_B$, then  all $(k-1)$-sets $E\subset [n]\setminus \{c\}$ with $E\cap \{z_B,z_F\}=\{z_B\}$ and $E\cap F\neq \emptyset$ are in $\hht_{x,B}^{(1)}\setminus \hf(c)$.  It follows that
\[
 R_c\geq\binom{n-3}{k-2}-\binom{n-k-2}{k-2}
 \overset{\eqref{eq:pi-global-gap}}{>}\binom{t-1}{k-1}\geq |\hf[x,\bar{c}]|,
\]
contradicting \eqref{ineq-leftcase2}.
Otherwise $z_F=z_B$ for all $F\in \hf[x,\bar{c}]$. If $\hf(c,\bar{B})\neq \emptyset$, then by
applying Claim~\ref{claim-center-payment} with $x=c$ and $\hd =\hf[x,\bar{c}]\setminus \{B\}$ we obtain that
$R_c\geq |\hf[x,\bar{c}]|-1$. If all sets in $\hf(c)$ intersect $B$, then
\[
R_c\geq |\hht_{c,B}^{(2)}|=\binom{n-k-1}{k-1}>|\hf[x,\bar{c}]|,
 \]
contradicting \eqref{ineq-leftcase2}.  Thus,
\[
 |\hf(x)|\leq
 \binom{n-k-2}{k-2}+\binom{n-3}{k-3}.
\]
\end{proof}

\section{Proof of Lemma \ref{lem-key2}}

We need the following inequality.

\begin{lem}\label{lem-4.1}
For $k\ge 4$ and $n\ge2k+1$,
\begin{equation}
 \binom{n-3}{k-2}-\binom{n-k-2}{k-2}\ge
 \binom{n-3}{k-3}+\binom{n-k-3}{k-3}.
 \label{eq:pi-bridge}
\end{equation}
\end{lem}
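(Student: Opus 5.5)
The plan is to turn the left-hand side of \eqref{eq:pi-bridge} into a sum of $\binom{\cdot}{k-3}$ terms and then match the two terms on the right against parts of this sum. By Pascal's rule, telescoped,
\[
\binom{n-3}{k-2}-\binom{n-k-2}{k-2}=\sum_{j=n-k-2}^{n-4}\binom{j}{k-3},
\]
which is a sum of $k-1$ terms. On the right-hand side we split $\binom{n-3}{k-3}=\binom{n-4}{k-3}+\binom{n-4}{k-4}$. The term $\binom{n-4}{k-3}$ cancels the top summand $j=n-4$. The bottom summand $\binom{n-k-2}{k-3}$ dominates $\binom{n-k-3}{k-3}$. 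After this, \eqref{eq:pi-bridge} reduces to
\begin{equation}\label{eq-plan-reduced}
\sum_{j=n-k-1}^{n-5}\binom{j}{k-3}=\binom{n-4}{k-2}-\binom{n-k-1}{k-2}\;\ge\;\binom{n-4}{k-4}.
\end{equation}
The sum on the left of \eqref{eq-plan-reduced} has $k-3\ge 1$ terms.

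For $k=4$ the left side of \eqref{eq-plan-reduced} is the single term $\binom{n-5}{1}=n-5\ge 4$ and the right side is $1$, so the case is trivial. For general $k$ I would first compare the largest summand with the target, using
\[
\binom{n-5}{k-3}\Big/\binom{n-4}{k-4}=\frac{(n-k)(n-k-1)}{(k-3)(n-4)}.
\]
This ratio is at least $1$ whenever $n$ is somewhat larger than $2k$, for instance $n\ge 3k$. In that range \eqref{eq-plan-reduced} follows from the single term $j=n-5$.

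The main obstacle is the range $2k+1\le n<3k$ with $k$ large. At $n=2k+1$ the ratio above is $\frac{(k+1)k}{(k-3)(2k-3)}$, which drops below $1$ once $k\ge 9$. So one term cannot suffice there, and I would use more of the $k-3$ summands. A concrete route is to bound each summand from below by a fixed multiple of $\binom{n-4}{k-4}$. From
\[
\binom{n-5-i}{k-3}\ge\binom{n-5}{k-3}\Bigl(\frac{n-k-1-i}{n-5}\Bigr)^{i},
\]
the first roughly $k/4$ summands each contribute a constant fraction of $\binom{n-5}{k-3}$. This uses $n-k-1\ge k$. Meanwhile $\binom{n-5}{k-3}/\binom{n-4}{k-4}\ge \frac{(k+1)k}{(k-3)(n-4)}$, which is $\Omega(1)$ for $n<3k$. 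Summing then gives \eqref{eq-plan-reduced} for all sufficiently large $k$. The finitely many remaining small pairs $(n,k)$ I would check directly.

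If this constant-chasing turns out messy, the alternative is to prove \eqref{eq-plan-reduced} by induction on $n$ starting from $n=2k+1$. The base case is an explicit comparison of binomial coefficients. The step from $n$ to $n+1$ adds $\binom{n-4}{k-3}-\binom{n-k-1}{k-3}$ to the left side and $\binom{n-4}{k-5}$ to the right side, and the first increment clearly dominates the second for $n\ge 2k+1$.
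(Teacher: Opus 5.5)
Your hockey-stick reduction is correct. It suffices to show $\binom{n-4}{k-2}-\binom{n-k-1}{k-2}\ge\binom{n-4}{k-4}$, and the single term $j=n-5$ does settle this for $n\ge 3k$.

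\textbf{The gap in your main route.} The problem is the range $2k+1\le n<3k$. You claim that the first $\approx k/4$ summands each contribute a constant fraction of $\binom{n-5}{k-3}$. This is false. Your own bound gives the factor $\bigl(\frac{n-k-1-i}{n-5}\bigr)^{i}$, which at $n=2k+1$ is roughly $2^{-i}$. The true ratio also decays geometrically, since consecutive summands satisfy $\binom{n-5-i}{k-3}/\binom{n-4-i}{k-3}=\frac{n-k-1-i}{n-4-i}$. So the whole sum is less than $\binom{n-4}{k-2}=\frac{n-4}{k-2}\binom{n-5}{k-3}$, which is only about three times its top term.

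Worse, no argument that loses constant factors can work here. At $n=2k+1$,
\[
\frac{\binom{2k-3}{k-2}-\binom{k}{2}}{\binom{2k-3}{k-4}}<\frac{\binom{2k-3}{k-2}}{\binom{2k-3}{k-4}}=\frac{k(k+1)}{(k-2)(k-3)}=1+\frac{6(k-1)}{(k-2)(k-3)},
\]
so the reduced inequality has relative slack only $\Theta(1/k)$. By contrast, ``$\Theta(k)$ summands, each at least a constant times $\binom{n-4}{k-4}$'' would make this ratio grow linearly in $k$.

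\textbf{The gap in your fallback.} The induction on $n$ has the right structure, and you computed the increments correctly. But saying that $\binom{n-4}{k-3}-\binom{n-k-1}{k-3}$ ``clearly'' dominates $\binom{n-4}{k-5}$ hides the whole difficulty. At $n=2k+1$ and $k\ge5$ one has $\binom{2k-3}{k-3}/\binom{2k-3}{k-5}=1+\frac{10(k-1)}{(k-3)(k-4)}$, and you still have to absorb $\binom{k}{3}$.

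The increment inequality is true, and here is one way to close it. It follows from your reduced inequality with $k-1$ in place of $k$, because $\binom{n-k-1}{k-3}\le\binom{n-k}{k-3}$ and $n\ge 2(k-1)+1$. So an outer induction on $k$ works, starting from the trivial case $k=4$ (where the reduced inequality is $n-5\ge1$). It must be combined with the base case $n=2k+1$, which amounts to $\frac{6(k-1)}{(k-2)(k-3)}\binom{2k-3}{k-4}\ge\binom{k}{2}$ and is easy. With these additions your route becomes a valid alternative proof.

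\textbf{How the paper avoids this.} The paper keeps exact ratios throughout. It writes $\binom{n-3}{k-2}-\binom{n-3}{k-3}=\frac{n-2k+2}{n-k}\binom{n-3}{k-2}$ and $\binom{n-k-2}{k-2}+\binom{n-k-3}{k-3}=\frac{n-4}{n-k-2}\binom{n-k-2}{k-2}$. It then uses $\binom{n-3}{k-2}/\binom{n-k-2}{k-2}=\prod_i\bigl(1+\frac{k-1}{n-k-2-i}\bigr)\ge1+\frac{(k-1)(k-2)}{n-k-2}$. What remains is the elementary inequality $k-1>\frac{2(n-k-1)}{n-2k+2}$, which holds uniformly for all $n\ge2k+1$ with no case split.
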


\begin{proof}
Note that
\[
\binom{n-3}{k-2}-\binom{n-3}{k-3} = \frac{n-2k+2}{n-k}\binom{n-3}{k-2}
\]
and
\[
\binom{n-k-2}{k-2}+\binom{n-k-3}{k-3}=\frac{n-4}{n-k-2}\binom{n-k-2}{k-2}.
\]
Then \eqref{eq:pi-bridge} is equivalent to
\[
\frac{\binom{n-3}{k-2}}{\binom{n-k-2}{k-2}} \geq \frac{(n-4)(n-k)}{(n-k-2)(n-2k+2)}=1+\frac{2 (k-2) (n - k-1)}{(n-k-2)(n-2k+2)},
\]
Since
\[
\frac{\binom{n-3}{k-2}}{\binom{n-k-2}{k-2}}= \prod_{i=0}^{k-2}\left(1+\frac{k-1}{n-k-2-i}\right)
 \ge1+\frac{(k-2)(k-1)}{n-k-2},
\]
it suffices to show
\[
k-1>\frac{2 (n - k-1)}{n-2k+2},
\]
which is true for $n\geq 2k+1$ and $k\geq 4$. This proves \eqref{eq:pi-bridge}.
\end{proof}

\begin{proof}[Proof of Lemma \ref{lem-key2}]
Since $\hf$ is non-trivial, both $\hf[x,\bar c]$ and $\hf[c,\bar x]$ are nonempty.
Moreover, there are disjoint members
\[
 B\in\hf[x,\bar c]
 \qquad\hbox{and}\qquad
 C\in\hf[c,\bar x].
\]
Indeed, if $\hf[x,\bar c]$ and $\hf[c,\bar x]$ are cross-intersecting, then $\hf$ is intersecting. Thus by the Helly property $\hf$  must be a star,  contradicting the non-triviality of $\hf$.

 Let $z_B\in B$ be the local center of $B$ with respect to $c$. Recall that $\hf(c)\subset \hht_{c,B}$ and $|\hht_{c,B}|=|\hht(n,k)|-1$.
 Since  $\{x,c\}$ is a
transversal,
\begin{equation}
 |\hf(\bar{c})|=|\hf[x,\bar c]|\leq|\hf(x)|\leq
 \binom{n-k-2}{k-2}+\binom{n-3}{k-3}.
 \label{eq-explicit-degree}
\end{equation}
Thus,
\[
 |\hf|=|\hf[c]|+|\hf(\bar{c})|\leq |\hht(n,k)|-1
 -|\hht_{c,B}\setminus\hf(c)|+|\hf[x,\bar c]|.
\]
Consequently, it is  enough to prove
\begin{equation}
 |\hht_{c,B}\setminus\hf(c)|
 \geq |\hf[x,\bar c]|-1.
 \label{eq-explicit-missing-target}
\end{equation}

For each $F\in\hf[x,\bar c]$, let $z_F\in F$ be the local center of $F$ with respect to $c$. Then
\begin{equation}
 E\in\hf(c),\quad E\cap F\neq\emptyset
 \quad\Longrightarrow\quad z_F\in E.
 \label{eq-local-center-two-cover}
\end{equation}
Define
\[
 \hc_{\mathrm{in}}:=\left\{
 F\in\hf[x,\bar c]\setminus\{B\}:z_F\in B
 \right\},\qquad \hc_{\mathrm{ext}}:=
 \left\{
 F\in\hf[x,\bar c]\setminus\{B\}:z_F\notin B
 \right\}.
\]
Note that $C\in \hf[c,\bar{B}]$.  Apply
 Claim~\ref{claim-center-payment} with $x=c$ and
$\hd= \hc_{\mathrm{in}}$, we obtain that
\begin{equation}
 |\hht_{c,B}^{(2)}\setminus \hf(c)|
 \geq
 \left| \hc_{\mathrm{in}}\right|.
 \label{eq-B-center-payment}
\end{equation}

If there is no $F\in \hf[x,\bar c]$ satisfying $z_F\notin B$,  then $| \hc_{\mathrm{in}}|=|\hf[x,\bar{c}]|-1$. Then  \eqref{eq-explicit-missing-target} follows from \eqref{eq-B-center-payment}.
 Thus we may assume that $ \hc_{\mathrm{ext}}\neq \emptyset$.

 \begin{claim}\label{claim-1}
 Unless \eqref{eq-explicit-missing-target} already holds,  there exists some $Z\subset [n]\setminus (B\cup \{c\})$ such that
 \[
  \hc_{\mathrm{ext}}= \{F_z=(B\setminus\{z_B\})\cup\{z\}\colon z \in Z\}.
\]
Moreover, $z$ is the local center of $F_z$ with respect to $c$.
 \end{claim}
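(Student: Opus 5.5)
The plan is to imitate the proof of the analogous Claim in the proof of Lemma~\ref{lem-key1}. We will show that any $F\in\hc_{\mathrm{ext}}$ not of the form $(B\setminus\{z_B\})\cup\{z_F\}$ forces so many members of $\hht_{c,B}$ to be missing from $\hf(c)$ that \eqref{eq-explicit-missing-target} holds. By \eqref{eq-explicit-degree} it suffices to show
\[
|\hht_{c,B}\setminus\hf(c)|\geq \binom{n-k-2}{k-2}+\binom{n-3}{k-3}-1 .
\]

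The basic tool is \eqref{eq-local-center-two-cover}, applied to $B$ and to a fixed $F\in\hc_{\mathrm{ext}}$. Every $E\in\hf(c)$ meeting $B$ contains $z_B$, and every $E\in\hf(c)$ meeting $F$ contains $z_F$. Since $z_F\notin B$, we have $z_F\neq z_B$. Hence every $(k-1)$-set $E\subset[n]\setminus\{c\}$ with $E\cap\{z_B,z_F\}=\{z_B\}$ and $E\cap F\neq\emptyset$ lies in $\hht^{(1)}_{c,B}\setminus\hf(c)$. Similarly, every $(k-1)$-set $E$ disjoint from $B\cup\{c,z_F\}$ and meeting $F$ lies in $\hht^{(2)}_{c,B}\setminus\hf(c)$. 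These two collections are disjoint. I would then split into cases according to the position of $z_B$ and the size of $F\setminus B$.

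\emph{Case $z_B\in F$.} The first collection consists of all $(k-1)$-sets containing $z_B$ and avoiding $c,z_F$, so it has $\binom{n-3}{k-2}$ members. By Lemma~\ref{lem-4.1},
\[
\binom{n-3}{k-2}\ge \binom{n-k-2}{k-2}+\binom{n-3}{k-3}+\binom{n-k-3}{k-3},
\]
which is more than enough.

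\emph{Case $z_B\notin F$ and $|F\setminus B|\ge 2$.} The first collection has $\binom{n-3}{k-2}-\binom{n-k-1}{k-2}$ members, since $|F\setminus\{z_F\}|=k-1$. Choosing $w\in F\setminus(B\cup\{z_F\})$, the second collection contains all $(k-1)$-subsets of $[n]\setminus(B\cup\{c,z_F\})$ through $w$, giving at least $\binom{n-k-3}{k-2}$ further sets. Adding these bounds and using Lemma~\ref{lem-4.1} together with Pascal's rule should give the required total.

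\emph{Remaining case $z_B\notin F$ and $F\setminus B=\{z_F\}$.} Then $|F|=|B|=k$ forces $F=(B\setminus\{z_B\})\cup\{z_F\}$. Setting $Z=\{z_F\colon F\in\hc_{\mathrm{ext}}\}\subset[n]\setminus(B\cup\{c\})$ gives the claimed description, and the local center of $F_z$ is $z_F=z$ by construction.

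The main obstacle is the numerical comparison in the middle case. A naive sum of the two counts seems to fall short of $\binom{n-k-2}{k-2}+\binom{n-3}{k-3}-1$ by roughly $\binom{n-k-2}{k-3}$. If this happens I would first sharpen the count of the second collection by using all of $F\setminus(B\cup\{z_F\})$ rather than a single $w$. Failing that, I would add the disjoint contribution from $\hc_{\mathrm{in}}$ via \eqref{eq-B-center-payment}, together with the sets through $x$ (recall $x\in B\cap F$, so $x\neq z_F$). In the last resort, $n\ge 2k+1$, $k\ge4$ and a direct ratio estimate as in the proof of Lemma~\ref{lem-4.1} should close the gap.
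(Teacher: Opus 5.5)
\textbf{Verdict.} Your case split reorganizes the paper's. The paper first treats $|F\setminus B|\ge 2$, with $z_B\in F$ and $z_B\notin F$ together. Then, within $|F\setminus B|=1$, it disposes of $F=(B\setminus\{b\})\cup\{z_F\}$ with $b\neq z_B$ exactly as in your first case. Your first and last cases are fine. The gap is in the middle case, which you leave open, and it comes from a miscount rather than a real deficit.

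\textbf{The miscount.} When $z_B\notin F$, a member of your first collection has the form $\{z_B\}\cup E'$. Here $E'$ is a $(k-2)$-subset of the $(n-3)$-set $[n]\setminus\{c,z_B,z_F\}$ that meets $F\setminus\{z_F\}$. That set has $k-1$ elements, all inside this $(n-3)$-set. So the sets $E'$ avoiding it number $\binom{n-k-2}{k-2}$, not $\binom{n-k-1}{k-2}$. The first collection therefore has exactly $\binom{n-3}{k-2}-\binom{n-k-2}{k-2}$ members. The shortfall you observed is precisely your undercount, $\binom{n-k-1}{k-2}-\binom{n-k-2}{k-2}=\binom{n-k-2}{k-3}$.

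\textbf{The fix.} With the correct number, Lemma~\ref{lem-4.1} and Pascal's rule close the case at once:
\begin{align*}
|\hht_{c,B}\setminus\hf(c)| &\ge \binom{n-k-3}{k-2}+\binom{n-3}{k-2}-\binom{n-k-2}{k-2}\\
&\ge \binom{n-k-3}{k-2}+\binom{n-3}{k-3}+\binom{n-k-3}{k-3}=\binom{n-k-2}{k-2}+\binom{n-3}{k-3}\ge|\hf[x,\bar c]|.
\end{align*}
This is exactly the paper's computation, and the $-1$ in your target is not even needed.

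\textbf{Your fallback remedies would not have closed the case.}
\begin{itemize}
\item Using all of $F\setminus(B\cup\{z_F\})$ instead of a single $w$ gains nothing when $|F\setminus B|=2$.
\item \eqref{eq-B-center-payment} bounds the same part $\hht^{(2)}_{c,B}\setminus\hf(c)$ that contains your second collection. The witnesses may coincide, so the two bounds cannot simply be added. Moreover $\hc_{\mathrm{in}}$ may be empty.
\item A $(k-1)$-set through $x$ either omits $z_B$ or contains it. If it omits $z_B$, it meets $B$ at $x$ and so is not in $\hht_{c,B}$ at all. If it contains $z_B$ but omits $z_F$, it already lies in your first collection, because $x\in F$.
\end{itemize}
What is missing is only the corrected count; with it, your argument coincides with the paper's proof.
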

 \begin{proof}
 If there is $F\in  \hc_{\mathrm{ext}}$ and $|F\setminus B|\geq2$,
choose
$w\in F\setminus (B\cup \{z_F\})$.  By \eqref{eq-local-center-two-cover}, all sets in \[
\left\{E\in \binom{[n]\setminus \{c\}}{k-1}\colon E\cap B=\emptyset,\ w\in E, z_F\notin E\right\}
\]
 are in $\hht_{c,B}^{(2)}\setminus \hf(c)$, their  number is $\binom{n-k-3}{k-2}$.
Moreover,  all sets in
\[
\left\{E\in \binom{[n]\setminus \{c\}}{k-1}\colon z_B\in E,\ E\cap F\neq \emptyset,\ z_F\notin E\right\}
 \]
are in $\hht_{c,B}^{(1)}\setminus \hf(c)$.  If $z_B\notin F$, then their number is exactly $\binom{n-3}{k-2}-\binom{n-k-2}{k-2}$,
and if $z_B\in F$ it is at least this large.  Thus, by \eqref{eq:pi-bridge} and \eqref{eq-explicit-degree},
\begin{align*}
 |\hht_{c,B}\setminus\hf(c)|& = |\hht_{c,B}^{(2)}\setminus\hf(c)|+ |\hht_{c,B}^{(1)}\setminus\hf(c)|\\
 &\geq \binom{n-k-3}{k-2}
 +\binom{n-3}{k-2}-\binom{n-k-2}{k-2}\\
 &\geq\binom{n-k-2}{k-2}+\binom{n-3}{k-3}\\
 &\geq|\hf[x,\bar c]|
\end{align*}
and \eqref{eq-explicit-missing-target} holds.

Consequently we may  assume that $|F\setminus B|=1$ for every $F\in  \hc_{\mathrm{ext}}$.  Since $z_F\notin B$ and $|F\setminus B|=1$ , we have
\[
 F=(B\setminus\{b\})\cup\{z_F\}
\]
for some $b\in B$.  If $b\neq z_B$, then $z_B\in F$ and every
set in $\{E\in \binom{[n]\setminus \{c\}}{k-1}\colon z_B\in E,\ z_F\notin E\}$ is in $\hht_{c,B}^{(1)}\setminus \hf(c)$.  Then  by \eqref{eq:pi-bridge}
\[
 |\hht_{c,B}\setminus\hf(c)|\geq\binom{n-3}{k-2}> \binom{n-3}{k-2}-\binom{n-k-2}{k-2}
 \geq\binom{n-k-2}{k-2}+\binom{n-3}{k-3}
 \geq|\hf[x,\bar c]|
\]
and \eqref{eq-explicit-missing-target} holds again.  Thus for every $F\in \hc_{ext}$,
\[
 F=(B\setminus\{z_B\})\cup\{z_F\}
\]
 and the claim holds.
\end{proof}

By Claim \ref{claim-1},
\begin{align*}
 |\hc_{\mathrm{ext}}|\leq n-k-1<n.
\end{align*}
Fix
$F_0\in\hc_{\mathrm{ext}}$ and write
\[
 F_0=(B\setminus\{z_B\})\cup\{z_0\}.
\]
Every member of
\[
 \left\{
 E\in\binom{[n]\setminus \{c\}}{k-1}:
 z_B\in E,\
 z_0\notin E,\
 E\cap(B\setminus\{z_B\})\neq\emptyset
 \right\}
\]
is in  $\hht_{c,B}^{(1)}\setminus \hf(c)$.  Thus,
\begin{align}\label{ineq-4.2}
|\hht_{c,B}^{(1)}\setminus \hf(c)|\geq  \binom{n-3}{k-2}-\binom{n-k-2}{k-2}\overset{\eqref{eq:pi-global-gap}}{>}n>|\hc_{ext}|.
\end{align}
Adding \eqref{ineq-4.2} and \eqref{eq-B-center-payment}, we obtain that
\begin{align*}
|\hht_{c,B}\setminus \hf(c)|=|\hht_{c,B}^{(1)}\setminus \hf(c)|+|\hht_{c,B}^{(2)}\setminus \hf(c)| \geq &|\hc_{\mathrm{in}}|+
 |\hc_{\mathrm{ext}}|
 =|\hf[x,\bar c]|-1.
\end{align*}
This completes the proof.
\end{proof}

\section{Proof of Theorem \ref{thm-main1}}

Let us first prove Theorem \ref{thm-main1} in the case $|\hf(\bar{y})|=1$  for some $y$.

\begin{lem}\label{lem-6.1}
Let $y\in [n]$, $P\in \hf(\bar{y})$. Define \[
\hf_P(y)=\{F\setminus \{y\}\colon y\in F\in \hf,\ F\cap P\neq \emptyset\}.
\]
Then $\hf_P(y)$ is intersecting.
\end{lem}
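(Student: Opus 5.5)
The statement is implicitly about a triangle-free family $\hf\subset\binom{[n]}{k}$, since we are now in the proof of Theorem~\ref{thm-main1}. The argument is a direct application of the triangle-free property, with $P$ playing the role of the third edge. Take two members $G_1,G_2\in\hf_P(y)$ and write $F_i=G_i\cup\{y\}\in\hf$, so $y\in F_i$ and $F_i\cap P\neq\emptyset$ for $i=1,2$. We must show $G_1\cap G_2\neq\emptyset$.

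Suppose, to the contrary, that $G_1\cap G_2=\emptyset$. Then $F_1\cap F_2=\{y\}$ exactly. Consider the three edges $F_1,F_2,P$ and check the triangle conditions:
\begin{itemize}
\item $F_1\cap F_2\ni y$, so this intersection is non-empty;
\item $F_1\cap P\neq\emptyset$ and $F_2\cap P\neq\emptyset$ by the definition of $\hf_P(y)$.
\end{itemize}
Their total intersection is $F_1\cap F_2\cap P=\{y\}\cap P=\emptyset$, because $P\in\hf(\bar y)$ means $y\notin P$. Hence $F_1,F_2,P$ form a triangle in $\hf$, contradicting triangle-freeness.

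Two edge cases need checking. First, the $F_i$ are automatically distinct from $P$, since they contain $y$ and $P$ does not. Second, if $G_1=G_2$, they intersect trivially because $k-1\geq 1$. So every pair in $\hf_P(y)$ intersects, and $\hf_P(y)$ is intersecting.

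There is no real obstacle. The only point requiring care is that the third edge $P$ avoids $y$, which is exactly what makes the total intersection empty.
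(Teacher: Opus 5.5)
Your proof is correct and is essentially the paper's argument: if two members of $\hf_P(y)$ were disjoint, the two corresponding edges through $y$, together with $P$ (which avoids $y$), would form a triangle. The paper's contradiction hypothesis is the weaker condition $R_1\cap R_2\cap P=\emptyset$, so it actually proves that any two members of $\hf_P(y)$ meet inside $P$. Your version with $G_1\cap G_2=\emptyset$ gives exactly what the lemma, and its later use via Erd\H{o}s--Ko--Rado in Corollary~\ref{cor-6.1}, requires.
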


\begin{proof}
In the opposite case we can fix $R_1,R_2\in \hf(y)$ with $R_1\cap R_2\cap P=\emptyset$, $R_1\cap P\neq \emptyset \neq R_2\cap P$. Consequently $R_1\cup \{y\}, R_2\cup \{y\}$ and $P$ form a triangle, a contradiction.
\end{proof}

\begin{cor}\label{cor-6.1}
If $|\hf(\bar{y})|=1$ and $n\geq 2k$ then
\[
|\hf| \leq |\hht| =\binom{n-k-1}{k-1}+\binom{n-2}{k-2}+1.
\]
Moreover, equality holds if and only if $\hf$ is isomorphic to $\hht$.
\end{cor}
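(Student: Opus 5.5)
Write $\hf(\bar y)=\{P\}$. Then
\[
|\hf|=1+|\hf(y)|,\qquad \hf(y)\subset\binom{[n]\setminus\{y\}}{k-1},
\]
and we split $\hf(y)$ according to whether the sets meet $P$:
\[
\hf(y)=\hf_P(y)\cup \hf(y,\bar P).
\]
For the first part we use Lemma \ref{lem-6.1}: $\hf_P(y)$ is an intersecting family of $(k-1)$-sets, each of which meets $P$. For the second part we only use the trivial bound
\[
|\hf(y,\bar P)|\le \binom{n-k-1}{k-1},
\]
since these are $(k-1)$-subsets of $[n]\setminus(P\cup\{y\})$, a set of size $n-k-1$. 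It therefore suffices to show
\[
|\hf_P(y)|\le \binom{n-2}{k-2}.
\]
The target is exactly the count of $(k-1)$-subsets of $[n]\setminus\{y\}$ containing a fixed element $p\in P$; in $\hht$ this is $\hht_2$, and the second part is $\hht_1$.

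To prove the bound on $\hf_P(y)$, note it is an intersecting family of $(k-1)$-sets on the ground set $[n]\setminus\{y\}$ of size $n-1\ge 2(k-1)+1$. Erd\H{o}s--Ko--Rado (Theorem \ref{thm-ekr}) then gives $|\hf_P(y)|\le\binom{n-2}{k-2}$ directly. The edge case $n=2k$ is fine because $n-1=2k-1\ge 2(k-1)$. This proves the inequality.

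For the equality case, equality forces two things. First, $|\hf(y,\bar P)|=\binom{n-k-1}{k-1}$, so every $(k-1)$-subset of $[n]\setminus(P\cup\{y\})$ lies in $\hf(y)$. Second, $|\hf_P(y)|=\binom{n-2}{k-2}$. Since $n-1>2(k-1)$, the uniqueness part of EKR (or Hilton--Milner, Theorem \ref{thm-HM}, applied with $k-1\ge 2$) shows $\hf_P(y)$ is a full star with some center $p$. Its members all meet $P$. If $p\notin P$, the star would contain $(k-1)$-sets disjoint from $P$; such sets exist because $n-1-k\ge k-1$ when $n\ge 2k$. So $p\in P$. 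Hence $\hf$ consists of $P$, all $k$-sets containing $\{y,p\}$, and all $k$-sets containing $y$ and disjoint from $P$, which is $\hht$ with $1\mapsto y$, $2\mapsto p$, $T_0=P$.

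The main obstacle is the equality characterization at the boundary $n=2k$. There $n-1=2k-1=2(k-1)+1$, so EKR uniqueness still applies, but Hilton--Milner requires $n-1>2(k-1)$, which holds. The other point to check is that $k-1\ge 2$ so uniqueness holds; this is true since $k\ge 3$. I would verify these small-parameter conditions explicitly.
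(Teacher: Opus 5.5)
Your proof is correct and follows the paper's argument. Both split $\hf(y)$ into two parts: $\hf_P(y)$, which is intersecting by Lemma \ref{lem-6.1} and bounded by Erd\H{o}s--Ko--Rado on $n-1\ge 2(k-1)$ points, and the sets avoiding $P$, bounded trivially by $\binom{n-k-1}{k-1}$. Your equality analysis uses EKR/Hilton--Milner uniqueness since $n-1>2(k-1)$, then forces the center into $P$ using $n-k-1\ge k-1$; this spells out what the paper compresses into ``with equality if and only if it is the full star of some $x\in P$.''
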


\begin{proof}
Let $\hf(\bar{y})=\{P\}$. By Lemma \ref{lem-6.1}, $\hf_P(y)$ is intersecting.
By Theorem \ref{thm-ekr}, $|\hf_P(y)|\leq \binom{n-2}{k-2}$ with equality if and only if it is the full star of some $x\in P$. As $\hf(\{y\},P\cup \{y\})\subset \binom{[n]\setminus (\{y\}\cup P)}{k-1}$, $|\hf(\{y\},P\cup \{y\})|\leq \binom{n-k-1}{k-1}$. Thus the corollary follows.
\end{proof}

We need the following inequalities, whose proofs are deferred to Section 6.

\begin{lem}\label{lem-5.2}
Let $k\ge 5$ and $n\ge 12k^2$.
Then the following equalities hold:
\begin{align}
&\binom{n-k-5}{k-2}
>\binom{n-2}{k-2}-\binom{n-k-1}{k-2}+1,                                 \label{ineq-6.1}\\[3pt]
&\binom{n-k-2}{k-2}\ge
 \left(\frac{n-2}{k-2}-k-3\right)\binom{k^2+3k-10}{k-1},\label{ineq-6.2}\\[3pt]
&\left(\frac{n-1}{k-1}-k-1\right)\binom{k^2+2k-4}{k-1}
 <\binom{n-4}{k-3},\label{ineq-6.3}\\[3pt]
&\binom{n-4}{k-3}<\binom{n-k-2}{k-2}.\label{ineq-6.4}
\end{align}
\end{lem}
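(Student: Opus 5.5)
This lemma consists of four elementary binomial inequalities. I would prove each one directly with ratio estimates, using $n\ge 12k^2$ throughout.

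\emph{Proof of \eqref{ineq-6.1}.} By the telescoping argument of Fact~\ref{fact-6.3},
\[
\binom{n-2}{k-2}-\binom{n-k-1}{k-2}\le (k-1)\binom{n-3}{k-3}.
\]
It therefore suffices to show $\binom{n-k-5}{k-2}>(k-1)\binom{n-3}{k-3}+1$. Write
\[
\binom{n-k-5}{k-2}=\frac{n-k-5-(k-3)}{k-2}\binom{n-k-5}{k-3}.
\]
By \eqref{ineq-key0} with $i=k+2$,
\[
\binom{n-k-5}{k-3}\ge \Bigl(1-\frac{(k+2)(k-3)}{n-3}\Bigr)\binom{n-3}{k-3}\ge \tfrac{11}{12}\binom{n-3}{k-3}.
\]
The remaining factor satisfies $\frac{n-2k-2}{k-2}\ge \frac{12k^2-2k-2}{k-2}>12k$. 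Hence $\binom{n-k-5}{k-2}>11k\binom{n-3}{k-3}$, which exceeds $(k-1)\binom{n-3}{k-3}+1$ with plenty of room.

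\emph{Proof of \eqref{ineq-6.4}.} Write $\binom{n-k-2}{k-2}=\frac{n-2k}{k-2}\binom{n-k-2}{k-3}$, and by \eqref{ineq-key0} $\binom{n-k-2}{k-3}\ge (1-\frac{(k-2)(k-3)}{n-4})\binom{n-4}{k-3}$. This gives a factor of roughly $\frac{n}{k}\cdot\frac{11}{12}>1$.

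\emph{Proof of \eqref{ineq-6.3}.} I compare both sides with $\binom{n-4}{k-3}$. First, $\binom{k^2+2k-4}{k-1}\le \frac{(k^2+2k)^{k-1}}{(k-1)!}$. Second,
\[
\binom{n-4}{k-3}\ge \frac{(n-k)^{k-3}}{(k-3)!}.
\]
The left side is at most $\frac{n}{k-1}\binom{k^2+2k-4}{k-1}$, so the ratio LHS/RHS is bounded by
\[
\frac{n\,(k^2+2k)^{k-1}}{(k-1)(k-1)(k-2)\,(n-k)^{k-3}}.
\]
With $n\ge 12k^2$ we have $(n-k)^{k-3}\ge (12k^2-k)^{k-3}$. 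The ratio then becomes roughly
\[
\frac{n\,(k^2+2k)^2}{k^3}\Bigl(\frac{k^2+2k}{12k^2-k}\Bigr)^{k-3},
\]
and this must be handled carefully because $n$ also appears in the numerator. To fix this, I write $n\le (n-k)\cdot\frac{12k^2}{12k^2-k}$ and absorb one power, so that at least $(k-4)$ factors of about $\frac{1+2/k}{12}$ remain. Since these are geometric in $k$, the ratio is below $1$ for $k\ge 7$. The main obstacle is checking the small cases $k=5,6$ where only one or two such factors remain. There I would plug in $n=12k^2$ explicitly and verify numerically, and then note that the ratio is decreasing in $n$ because the exponent $k-3\ge2>1$.

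\emph{Proof of \eqref{ineq-6.2}.} This is the reverse-type inequality and is handled the same way. Bound the right side by $\frac{n}{k-2}\binom{k^2+3k-10}{k-1}$, and the left side from below by $\frac{n}{k-2}\binom{n-k-3}{k-3}$, the prefactor coming from $\frac{n-2k+1}{k-2}$ up to a factor $(1-o(1))$. The inequality then reduces to $\binom{n-k-3}{k-3}\gtrsim \binom{k^2+3k-10}{k-1}$, with care taken over the lost constant factor. Using the crude bounds $\binom{a}{b}\le a^b/b!$ and $\binom{a}{b}\ge (a-b)^b/b!$, this follows from
\[
(12k^2-2k)^{k-3}(k-1)(k-2)\ge (k^2+3k)^{k-1}\cdot C,
\]
which holds since each factor ratio is about $12$. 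Again $k=5,6$ need a direct check, and monotonicity in $n$ (the left side grows like $n^{k-2}$, the right side linearly) means it suffices to verify the statement at $n=12k^2$.
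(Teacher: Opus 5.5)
Your arguments for \eqref{ineq-6.1}, \eqref{ineq-6.2} and \eqref{ineq-6.4} are fine. For \eqref{ineq-6.1} you use the union bound $\binom{n-2}{k-2}-\binom{n-k-1}{k-2}\le (k-1)\binom{n-3}{k-3}$, whereas the paper applies \eqref{ineq-key0} to both $\binom{n-k-5}{k-2}$ and $\binom{n-k-1}{k-2}$ and adds; both work.

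The gap is in \eqref{ineq-6.3} at small $k$. Your plan to settle $k=5,6$ by evaluating at $n=12k^2$ cannot succeed, because \eqref{ineq-6.3} is false there:
\begin{itemize}
\item For $k=5$, $n=300$, the left side is $\bigl(\frac{299}{4}-6\bigr)\binom{31}{4}=68.75\cdot 31465\approx 2.2\cdot 10^6$, while $\binom{296}{2}=43660$. Since the right side is only quadratic in $n$, the failure persists up to $n\approx 15700$.
\item For $k=6$, $n=432$, the left side is $79.2\cdot\binom{44}{5}\approx 8.6\cdot 10^7$, against $\binom{428}{3}\approx 1.3\cdot 10^7$.
\end{itemize}
Your own estimate already signals this: at $k=5$ only $k-4=1$ factor of size about $\frac1{12}$ remains against a prefactor of order $k^3$, so your bound is about $10^2$. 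The lemma as printed, with $k\ge 5$, is therefore false for $k=5,6$ and cannot be proved. The paper's proofs of \eqref{ineq-6.2} and \eqref{ineq-6.3} explicitly use $k\ge 7$. That is all Lemmas~\ref{lem-5.3}, \ref{lem-5.4} and Theorem~\ref{thm-main1} require, so you should restrict the statement to $k\ge 7$.

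Even at $k=7$ your crude bounds do not close. With $\binom{k^2+2k-4}{k-1}\le (k^2+2k)^{k-1}/(k-1)!$ and $\binom{n-4}{k-3}\ge (n-k)^{k-3}/(k-3)!$, your ratio bound at $n=588$ is $\frac{588\cdot 63^6}{180\cdot 581^4}\approx 1.79>1$. Your ``roughly'' step replaced $(k-1)^2(k-2)=180$ by $k^3=343$, which hides almost a factor of $2$. The true ratio at $n=588$ is only about $0.84$, so no loose estimate will do, and $k=7$ has to be checked exactly, as the paper does. For $k=7$, \eqref{ineq-6.3} reads $\binom{n-4}{4}>\frac{n-49}{6}\binom{59}{6}$. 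This holds for $n\ge 556$, and the quotient of the two sides is decreasing in $n$ in that range, so it holds for all $n\ge 12k^2$. Your geometric-decay argument is correct from $k=8$ on (the bound is about $0.22$ at $k=8$), which matches the paper's split into $k\ge 8$ and $k=7$.
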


\begin{lem}\label{lem-5.3}
Let $\hf\subset \binom{[n]}{k}$ be a triangle-free family with $|\hf|>|\hht(n,k)|$. If $n\geq 12 k^2$ and $k\geq 7$, then there exist $x,y\in [n]$ such that
\begin{align}\label{ineq-5.4}
|\hf(x,y)|\geq  \binom{n-k-5}{k-2}.
\end{align}
\end{lem}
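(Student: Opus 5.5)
Since $|\hf|>|\hht(n,k)|$, the plan is to reuse the degree–concentration argument of Lemma~\ref{lem-4.2}, now for triangle-free families. The double count gives \eqref{ineq-2.1}, so there is a vertex $x$ with
\[
|\hl_x|\geq \frac{k|\hht(n,k)|-\binom{n}{k-1}}{n-1}.
\]
Since $n\geq 12k^2$, the right-hand side is roughly $\frac{k}{n}\binom{n-2}{k-2}\cdot\frac{n}{k-1}$, which is of order $\binom{n-2}{k-2}$. The target $\binom{n-k-5}{k-2}$ is smaller than this by only a factor $1-O(k^2/n)$.

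Next, decompose $\hl_x$ into the connected components $\hk_1,\ldots,\hk_s$ of $G_x$, with supports of sizes $v_1,\ldots,v_s$ and $\sum v_i\le n-1$. By Lemma~\ref{lem-MV}, each $\hk_i^+$ is intersecting. I then split according to the component sizes.

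\textbf{Small components.} Suppose every component has $\nu$-structure that is not a star. For an intersecting non-star family, the Hilton–Milner bound (Theorem~\ref{thm-HM}), or trivially $\binom{v_i}{k-1}$ when $v_i<2k-2$, gives a bound far below $\binom{v_i-1}{k-2}$ once $v_i$ is large. Combining this with Lemma~\ref{lem-5.1}-type convexity, the total is too small. So some component $\hk$ with large support is a star, say with center $y$.

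\textbf{Large star component.} Then $|\hf(x,y)|\geq |\hk|$. We need $|\hk|\geq\binom{n-k-5}{k-2}$, which requires controlling how much of $\hl_x$ lies outside $\hk$. The other components live on at most $n-1-|V(\hk)|$ vertices. Using convexity as in the lemma preceding \eqref{ineq-2.3}, their contribution is at most $\binom{n-1-|V(\hk)|}{k-2}$-ish. Meanwhile $|\hk|\le\binom{|V(\hk)|-1}{k-2}$. If $|V(\hk)|\geq n-k-3$ or so, we would be done provided $\hk$ is nearly full. Otherwise the sum of the two terms drops below the lower bound on $|\hl_x|$.

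The nearly-full case is where I expect the real difficulty. A star component on $V$ with $|V|$ close to $n$ need not itself be large. So instead I would argue directly: if $|\hf(x,y)|<\binom{n-k-5}{k-2}$ for all pairs, then for every $x$ the components of $G_x$ that are stars each have size below that threshold. Non-star components are bounded by Hilton–Milner. The sum over components, maximized by convexity (cf.\ \eqref{ineq-2.3}), is then bounded by something like
\[
\binom{n-k-5}{k-2}+\binom{k+4}{k-2}+\ldots
\]
I would then verify that this is less than $\bigl(k|\hht(n,k)|-\binom{n}{k-1}\bigr)/(n-1)$ for $n\ge12k^2$, $k\ge7$, using \eqref{ineq-key0} to compare $\binom{n-k-5}{k-2}$ with $\binom{n-2}{k-2}$.

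The main obstacle is this final numeric inequality. The lower bound on $M$ loses about $\binom{n}{k-1}/(n-1)\approx\frac{1}{k-1}\binom{n-2}{k-2}$ relative to $\frac{k}{n-1}|\hht|$. Hence $M\gtrsim\frac{k}{n}\binom{n-2}{k-2}\cdot\frac{n}{k-1}\cdot(1-\tfrac1k)$, i.e.\ about $\binom{n-2}{k-2}$ minus lower-order terms. Whether this exceeds $\binom{n-k-5}{k-2}$ plus the leftover terms is tight only up to $O(k^2/n)$ factors, which is exactly why $n\ge12k^2$ is imposed. If the straightforward estimate fails, I would sharpen the count by noting that $\hs$ is a family of unique $(k-1)$-sets, so $|\hs|\le\binom{n}{k-1}-\sum_x|\hl_x|$. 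This improves \eqref{ineq-2.1} to
\[
M\ge \frac{k|\hf|-\binom{n}{k-1}}{n-1}
\]
with the subtraction partially recovered, and that extra slack should suffice.
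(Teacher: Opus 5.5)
Your final argument by contradiction is the paper's proof rearranged, and its skeleton is sound. In it you assume $|\hf(x,y)|<\binom{n-k-5}{k-2}$ for all $x,y$, cap every component of $G_x$ at that value (stars by assumption, non-stars by Theorem~\ref{thm-HM} together with \eqref{ineq-6.1}), and sum. But the proposal stops at the step that carries the content: comparing the capped sum with the lower bound on $|\hl_x|$. You call this the main obstacle and leave it open, and neither suggested way of closing it works.

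First, \eqref{ineq-key0} only bounds binomial coefficients from below, so it cannot bound $\binom{n-k-5}{k-2}$ from above. Estimating the two sides separately to first order is also too coarse in general. The true margin is only about $\frac{2k-7}{n}\binom{n-2}{k-2}$, while the terms such estimates drop are of relative order $k^4/n^2$. At $n=12k^2$ that exceeds $(2k-7)/n$ once $k$ is large.

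Second, your fallback is false. Every $G\in\hl_x$ has $|\hf(G)|\ge 2$, so it also lies in $\hl_{x'}$ for some $x'\neq x$; the families $\hl_x$ overlap. Hence $|\hs|\le\binom{n}{k-1}-\sum_x|\hl_x|$ fails. Combined with $k|\hf|=\sum_x|\hl_x|+|\hs|$ it would give $k|\hf|\le\binom{n}{k-1}$, which is impossible when $|\hf|>|\hht(n,k)|$. Moreover, the inequality you then display is just \eqref{ineq-2.1} again.

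The missing input is already in the paper. Since $n\ge 2k+5$, \eqref{eq:pi-concentration} holds with $t=k+2$ and gives $|\hl_x|>\binom{n-k-4}{k-2}+\binom{k+1}{k-2}$ exactly, via a Vandermonde expansion rather than asymptotics. Against this bound nothing is tight.

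If the largest support satisfies $v_1\ge n-k-4$, the other components live on at most $k+3$ vertices and contribute at most $\binom{k+3}{k-1}$. So the largest component has at least $\binom{n-k-4}{k-2}+\binom{k+1}{k-2}-\binom{k+3}{k-1}\ge\binom{n-k-5}{k-2}$ members, and is therefore a star by \eqref{ineq-6.1} and Theorem~\ref{thm-HM}. This is exactly how the paper removes your ``nearly-full'' worry: a component with huge support is forced to be large.

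If $v_1\le n-k-5$, the total is at most $\binom{n-k-5}{k-2}+\binom{k+3}{k-2}$. The contradiction then reduces to $\binom{n-k-5}{k-3}>\binom{k+2}{k-3}+\binom{k+1}{k-3}$, which is obvious.

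One more step needs repair. You cannot apply \eqref{ineq-2.3} verbatim, because without the Helly property a component with $v_i<2k-2$ may be all of $\binom{V(\hk_i)}{k-1}$, which exceeds $\binom{v_i-1}{k-2}$. The paper first excludes $v_1\le 2k$ via Lemma~\ref{lem-5.1}: there are then more than $k+4$ components, forcing $|\hl_x|\le\binom{n-k-4}{k-2}$. It then bounds the non-largest components by $\binom{v_i}{k-2}$ instead.
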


\begin{proof}
Let $M:=\max_{x\in[n]}|\hl_x|$
and choose $x$ with $|\hl_x|=M$.
Since $|\hf|>|\hht(n,k)|$, by \eqref{ineq-2.1} and \eqref{eq:pi-concentration},
\begin{equation}
 |\hl_x|>\frac{k|\hht(n,k)|-\binom n{k-1}}{n-1}>\binom{n-k-4}{k-2}+\binom{k+1}{k-2}.
 \label{eq-M-lower2}
\end{equation}

Let $v_1,\ldots,v_s$ be the sizes of the supports of the components of of $G_x$. By symmetry assume that
\[
v_1\geq v_2\geq \ldots\geq v_s\geq k-1.
\]
If $v_1\leq 2k$, then $s\geq \lceil\frac{n-1}{2k}\rceil> k+4$.
By Lemma  \ref{lem-5.1},
\[
\binom{n-k-4}{k-2}+\binom{k+1}{k-2}<|\hl_x| \leq  \binom{(n-1)-s+1}{k-2}\leq \binom{n-k-4}{k-2},
\]
a contradiction. Thus $v_1\geq 2k+1$.
By Lemma~\ref{lem-MV} and  the Erd\H{o}s-Ko-Rado Theorem, we infer that
\[
 |\hl_x|\leq \binom{v_1-1}{k-2}+\sum_{2\leq i\leq s}\binom{v_i}{k-2},
 \qquad
 \sum_{i=1}^s v_i\leq n-1.
\]
If $v_1\leq n-k-5$, then by applying $\binom{a}{k-2}+\binom{b}{k-2}\leq \binom{a+1}{k-2}+\binom{b-1}{k-2}$ for $k-2\leq b\leq a$ repeatedly, we obtain that
\[
 \binom{n-k-4}{k-2}+\binom{k+1}{k-2}<|\hl_x|\leq \binom{n-k-5}{k-2}+\binom{k+3}{k-2}.
\]
It implies that
\[
\binom{n-k-5}{k-3}<\binom{k+1}{k-3}+\binom{k+2}{k-3}<2\binom{k+2}{k-3},
\]
which leads to a contradiction as $n\geq 12k^2$ and $k\geq 7$. Thus $v_1\geq n-k-4$. Let $\hk$ be
 the largest
component of $G_x$ with support $V=\cup_{K\in \hk}K$. Then
\begin{equation}
 |([n]\setminus\{x\})\setminus V|=n-1-v_1\leq k+3.
 \label{eq-large-support2}
\end{equation}
It follows that for $n\geq 12k^2$ and $k\geq 7$
\[
|\hk| \geq \binom{n-k-4}{k-2}-\binom{k+3}{k-1}\geq|\hl_x|-\binom{k+3}{k-1}\geq \binom{n-k-5}{k-2}.
\]

Since $\binom{n-k-5}{k-2}>\binom{n-2}{k-2}-\binom{n-k-1}{k-2}+1$ from \eqref{ineq-6.1}, by Theorem \ref{thm-HM} we infer that  $\hk$ is a star, say of center $y$.
Then
\[
|\hf(x,y)|\geq |\hk|\geq  \binom{n-k-5}{k-2}.
\]
\end{proof}

\begin{lem}\label{lem-5.4}
Let $\hf\subset \binom{[n]}{k}$ be a triangle-free family with $|\hf|>|\hht(n,k)|$. If $n\geq 12 k^2$ and $k\geq 7$, then there exists $y\in [n]$ such that
\[
|\hf(y)|\geq \binom{n-k-2}{k-1}.
\]
\end{lem}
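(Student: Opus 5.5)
Lemma~\ref{lem-5.3} supplies $x,y$ with $|\hf(x,y)|\geq \binom{n-k-5}{k-2}$; by symmetry of the roles I will show that one of $x,y$ has degree at least $\binom{n-k-2}{k-1}$. Suppose the contrary: $|\hf(x)|,|\hf(y)|<\binom{n-k-2}{k-1}$.

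\textbf{Step 1: the two one-sided links are vertex-disjoint.} Apply Fact~\ref{fact-6.3} to the pair $x,y$. Inequality \eqref{ineq-6.1} gives
\[
|\hf(x,y)|\geq\binom{n-k-5}{k-2}>\binom{n-2}{k-2}-\binom{n-k-1}{k-2},
\]
so the second alternative of Fact~\ref{fact-6.3} fails. Hence $\hf(x,\bar y)$ and $\hf(\bar x,y)$ are vertex-disjoint: their supports $X,Y\subset[n]\setminus\{x,y\}$ satisfy $X\cap Y=\emptyset$.

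\textbf{Step 2: control $\hf(\bar x,\bar y)$.} The family $\hf(x,y)$ is huge, so it has a large matching. Greedily, since every $(k-2)$-set meets at most $(k-2)\binom{n-3}{k-3}$ others, one gets
\[
\nu(\hf(\{x,y\}))\geq s\approx \frac{n-2}{(k-2)^2}
\]
(or use \eqref{ineq-EMCup} in contrapositive form). Taking $C=\{x,y\}$ in Fact~\ref{fact-6.1}, each $E\in\hf(\bar x,\bar y)$ meets at most one petal $D_i$. Moreover, $E$ cannot meet a member of $\hf[x,\bar y]$ and a member of $\hf[y,\bar x]$ while avoiding... (it could). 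More directly, use Fact~\ref{fact-6.2} with $C=\{x,y\}$. This gives $|\hf(\bar x,\bar y)|\leq s\binom{q}{k-1}$ with $q=n-2-(s-1)(k-2)$. Choosing $s$ to be about $\frac{n-1}{k-1}-k-1$ makes $q$ about $k^2+2k-4$, so by \eqref{ineq-6.3}
\[
|\hf(\bar x,\bar y)|<\binom{n-4}{k-3}.
\]
To guarantee a matching of that size in $\hf(x,y)$, I would use \eqref{ineq-EMCup} on the $(k-2)$-graph $\hf(x,y)$ on $n-2$ points: if $\nu<s$ then $|\hf(x,y)|\leq (s-1)\binom{n-3}{k-3}$, which is less than $\binom{n-k-5}{k-2}$ for the chosen $s$.

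\textbf{Step 3: bound the one-sided parts and conclude.} Write
\[
|\hf|=|\hf(x,y)|+|\hf(x,\bar y)|+|\hf(\bar x,y)|+|\hf(\bar x,\bar y)|.
\]
For the one-sided parts, use Lemma~\ref{lem-6.1}-type reasoning. Every $E\in\hf(x,\bar y)$ lies in $X$, and since $\hf(\bar x,y)\neq\emptyset$ (otherwise $y$ has high degree or the family is trivial; handle separately via Corollary~\ref{cor-6.1}), a set $P\in\hf[\bar x,y]$ forces the members of $\hf(x)$ meeting $P$ to be intersecting. Then $|X|+|Y|\leq n-2$ gives a Lemma~\ref{lem-5.1}-type bound: $|\hf(x,\bar y)|+|\hf(\bar x,y)|\leq\binom{|X|}{k-1}+\binom{|Y|}{k-1}$. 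If both $X$ and $Y$ are large (each at least $k^2$), the triangle-free structure with the big sunflower forces a loss, because members of $\hf(x,y)$ meeting both $X$ and $Y$ combine with one set from each side into a triangle. Hence $\hf(x,y)$ avoids $\binom{n-2}{k-2}$ minus roughly $\binom{n-k-2}{k-2}$ sets, and \eqref{ineq-6.4} together with \eqref{ineq-6.2} push the total below $|\hht(n,k)|$. Otherwise, one side, say $Y$, is small, so $|\hf(\bar x,y)|\leq\binom{k^2+3k-10}{k-1}$-type terms, and
\[
|\hf(x)|=|\hf|-|\hf(\bar x,y)|-|\hf(\bar x,\bar y)|\geq|\hht(n,k)|-\binom{n-4}{k-3}-\text{small}\geq\binom{n-k-2}{k-1},
\]
a contradiction.

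\textbf{Main obstacle.} I expect the hard part to be the balanced case of Step 3, where both one-sided links have large disjoint supports. There the triangle-free condition must be converted into a quantitative deficit in $\hf(x,y)$ that beats $\binom{|X|}{k-1}+\binom{|Y|}{k-1}$. I would try first to show that every member of $\hf(x,y)$ misses $X$ or misses $Y$, using a pair $E_1\in\hf(x,\bar y)$, $E_2\in\hf(\bar x,y)$ whenever it intersects both.
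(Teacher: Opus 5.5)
Your outline follows the paper's: Lemma~\ref{lem-5.3}, vertex-disjointness via Fact~\ref{fact-6.3} (your use of \eqref{ineq-6.1} there is fine), a large matching in $\hf(x,y)$ with Fact~\ref{fact-6.2} to control $\hf(\bar x,\bar y)$, and a case split on the supports $X,Y$. However, two steps fail as written.

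\textbf{Step 2 uses the wrong parameter.} With $C=\{x,y\}$ the petals have size $k-2$. Your choice $s\approx\frac{n-1}{k-1}-k-1$ therefore gives $q=n-2-(s-1)(k-2)$ of order $\frac{n-1}{k-1}+k^2$, not $k^2+2k-4$. So \eqref{ineq-6.3}, which is tailored to $C=\{y\}$, does not apply. You need $s=\lfloor\frac{n-2}{k-2}\rfloor-k-3$. Then \eqref{ineq-key0} and \eqref{ineq-EMCup} still give $\nu(\hf(x,y))\ge s$, one has $q\le k^2+3k-10$, and \eqref{ineq-6.2} yields $|\hf(\bar x,\bar y)|\le\binom{n-k-2}{k-2}$. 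This is weaker than the bound you claimed, but it suffices.

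\textbf{The balanced-case mechanism is false.} Take $E_1\in\hf(x,\bar y)$ and $E_2\in\hf(\bar x,y)$. By Step 1, $E_1\cup\{x\}$ and $E_2\cup\{y\}$ are disjoint, so they never form a triangle with a member of $\hf$ containing $x$ and $y$. For a concrete family, let $X\sqcup Y=[n]\setminus\{x,y\}$ and take all $k$-sets through $\{x,y\}$, together with the sets $\{x\}\cup E$, $E\in\binom{X}{k-1}$, and $\{y\}\cup E$, $E\in\binom{Y}{k-1}$. This family is triangle-free and $\hf(x,y)$ is complete. So no deficit in $\hf(x,y)$ can be forced, and your planned claim that members of $\hf(x,y)$ miss $X$ or $Y$ is false.

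\textbf{The missing idea is plain counting.} From $|\hf|>|\hht(n,k)|$, $|\hf(x,y)|\le\binom{n-2}{k-2}$ and the bound on $|\hf(\bar x,\bar y)|$, you get
\[
|\hf(x,\bar y)|+|\hf(\bar x,y)|>\binom{n-k-1}{k-1}-\binom{n-k-2}{k-2}+1=\binom{n-k-2}{k-1}+1 .
\]
The two parts live on disjoint supports of sizes $m_1+m_2\le n-2$. If both $m_1,m_2\ge k+1$, convexity gives $\binom{m_1}{k-1}+\binom{m_2}{k-1}\le\binom{n-k-3}{k-1}+\binom{k+1}{k-1}$. Combined with the display, this forces $\binom{n-k-3}{k-2}+1<\binom{k+1}{2}$, which is absurd.

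Hence some $m_i\le k$, say $m_1$. Then $|\hf(x,\bar y)|\le k$, and
\[
|\hf(y)|=|\hf(x,y)|+|\hf(\bar x,y)|\ge\binom{n-k-5}{k-2}+\binom{n-k-2}{k-1}+1-k\ge\binom{n-k-2}{k-1}.
\]
So the balanced case is the trivial one, not the main obstacle. Your detours through Lemma~\ref{lem-6.1}-type reasoning and Corollary~\ref{cor-6.1} are unnecessary.
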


\begin{proof}
By Lemma \ref{lem-5.3}, there exist $x,y\in [n]$ such that $|\hf(x,y)|\geq  \binom{n-k-5}{k-2}$.  Let $s=\lfloor\frac{n-2}{k-2}\rfloor-k-3$.
Note that
\begin{align}\label{ineq-5.3}
|\hf(x,y)|\geq  \binom{n-k-5}{k-2}&\overset{\eqref{ineq-key0}}{\geq} \frac{n-2-(k+3)(k-2)}{n-2} \frac{n-2}{k-2}\binom{n-3}{k-3}\geq s\binom{n-3}{k-3}.
\end{align}
By \eqref{ineq-EMCup}, we infer that $\hf(x,y)$ has matching number at least $s$. Applying Fact \ref{fact-6.2} with $C=\{x,y\}$ and
\[
q=n-2-(s-1)(k-2) < k^2+3k-10,
\]
 we obtain that
\begin{align*}
|\hf(\bar{x},\bar{y})|< s\binom{q}{k-1}<s\binom{k^2+3k-10}{k-1}.
\end{align*}

Now by \eqref{ineq-6.2},
\begin{align*}
|\hf(x,y)|+|\hf(\bar{x},\bar{y})|&\leq \binom{n-2}{k-2}+\left(\frac{n-2}{k-2}-k-3\right)\binom{k^2+3k-10}{k-1}\\[3pt]
&\leq \binom{n-2}{k-2}+\binom{n-k-2}{k-2}.
\end{align*}
Then
\[
|\hf(x,\bar{y})|+|\hf(\bar{x},y)| \geq |\hf|-\binom{n-2}{k-2}-\binom{n-k-2}{k-2} > \binom{n-k-2}{k-1}+1.
\]

Since by \eqref{ineq-5.3} and $n\geq 12 k^2$ we have
\[
|\hf(x,y)|\geq s\binom{n-3}{k-3}> (k-1) \binom{n-3}{k-3},
\]
 in view of Fact \ref{fact-6.3}, $\hf(x,\bar{y})$ and $\hf(\bar{x},y)$ are vertex-disjoint.  Let $|\cup \hf(x,\bar{y})|=m_1$ and $|\cup \hf(\bar{x},y)|=m_2$. Clearly $m_1+m_2\leq n-2$.  By Lemma  \ref{lem-5.1},
\[
\binom{n-k-2}{k-1}+1\leq |\hf(x,\bar{y})|+|\hf(\bar{x},y)|\leq \binom{m_1}{k-1} +\binom{m_2}{k-1}.
\]
Then we claim that either $m_1\leq  k$ or $m_2\leq k$. Indeed, otherwise we have
\[
\binom{n-k-2}{k-1}+1\leq  \binom{m_1}{k-1} +\binom{m_2}{k-1}\leq  \binom{n-k-3}{k-1} +\binom{k+1}{k-1}.
\]
Equivalently,
\[
\binom{n-k-3}{k-2}+1\leq  \binom{k+1}{k-1},
\]
a contradiction.

By symmetry assume $m_1\leq k$. Then $ |\hf(x,\bar{y})|\leq \binom{m_1}{k-1}\leq k$ and
\[
|\hf(\bar{x},y)|\geq \binom{n-k-2}{k-1}+1-|\hf(x,\bar{y})|\geq \binom{n-k-2}{k-1}+1-k.
\]
Thus,
\[
|\hf(y)|=|\hf(x,y)|+|\hf(\bar{x},y)| \geq  \binom{n-k-5}{k-2} +\binom{n-k-2}{k-1}+1- k\geq \binom{n-k-2}{k-1}.
\]
\end{proof}

\begin{proof}[Proof of Theorem \ref{thm-main1}]
Let $s=\lfloor\frac{n-1}{k-1}\rfloor-k-1$.
By Lemma \ref{lem-5.4}  there exists $y\in [n]$ such that
\[
|\hf(y)|\geq \binom{n-k-2}{k-1} \overset{\eqref{ineq-key0}}{\geq}  \frac{n-1-(k+1)(k-1)}{n-1} \frac{n-1}{k-1}\binom{n-2}{k-2} \geq s\binom{n-2}{k-2}.
\]
 By \eqref{ineq-EMCup}, we infer that $\hf(y)$ has matching number at least $s$. Apply Fact \ref{fact-6.2} with $C=\{y\}$ and
\[
q=  n-1-(s-1)(k-1)\leq k^2+2k-4.
 \]
By \eqref{ineq-6.3} we obtain that
\[
|\hf(\bar{y})| \leq s \binom{k^2+2k-4}{k-1}\leq \left(\frac{n-1}{k-1} -k-1\right)\binom{k^2+2k-4}{k-1}<\binom{n-4}{k-3}.
\]

For  $P\in \hf(\bar{y})$ and define
\[
 \ha_P:=\{E\in\hf(y)\colon E\cap P\ne\emptyset\},\qquad
 \hb_P:=\{E\in\hf(y)\colon E\cap P=\emptyset\}.
\]
Clearly $|\hb_P| \leq \binom{n-k-1}{k-1}$.

\begin{claim}
We may assume that $\ha_P$ is a star with center in $P$ for any $P\in \hf(\bar{y})$.
\end{claim}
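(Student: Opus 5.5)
By Lemma \ref{lem-6.1}, $\ha_P$ is intersecting (as a family of $(k-1)$-sets on $[n]\setminus\{y\}$). I would show that if $\ha_P$ is not a star with center in $P$, then $|\hf|\le|\hht(n,k)|$ already. In that case we are done, and this is the meaning of ``we may assume''.

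The first step is to show that if $\ha_P$ is a star at all, its center lies in $P$. Suppose its center is $w\notin P$. Every member of $\ha_P$ meets $P$ and contains $w$, so $|\ha_P|\le \binom{n-2}{k-2}-\binom{n-k-2}{k-2}$. Combining this with $|\hb_P|\le\binom{n-k-1}{k-1}$ and $|\hf(\bar y)|<\binom{n-4}{k-3}$ gives
\[
|\hf|\le \binom{n-2}{k-2}-\binom{n-k-2}{k-2}+\binom{n-k-1}{k-1}+\binom{n-4}{k-3},
\]
and by \eqref{ineq-6.4} this is less than $|\hht(n,k)|$.

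The second step handles the case where $\ha_P$ is intersecting but not a star. Here I apply Theorem \ref{thm-HM} to $\ha_P$, viewed as a $(k-1)$-uniform family on $n-1$ points. This needs $n-1>2(k-1)$, which holds, and gives
\[
|\ha_P|\le \binom{n-2}{k-2}-\binom{n-k-1}{k-2}+1.
\]
Again $|\hf|\le |\ha_P|+|\hb_P|+|\hf(\bar y)|$. So it suffices to check that
\[
\binom{n-2}{k-2}-\binom{n-k-1}{k-2}+1+\binom{n-4}{k-3}\le \binom{n-2}{k-2}+1,
\]
that is, $\binom{n-4}{k-3}\le\binom{n-k-1}{k-2}$. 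This follows from \eqref{ineq-6.4}, since $\binom{n-k-2}{k-2}\le\binom{n-k-1}{k-2}$.

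Both cases therefore contradict $|\hf|>|\hht(n,k)|$. The only slightly delicate point is the bookkeeping: $\hf(y)$ splits exactly as $\ha_P\cup\hb_P$, and the bound $|\hf(\bar y)|<\binom{n-4}{k-3}$ established just above absorbs the whole contribution of $\hf(\bar y)$, including $P$ itself. I expect no real obstacle beyond confirming these numerical comparisons via \eqref{ineq-6.4}.
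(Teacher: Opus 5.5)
Your proposal is correct and is essentially the paper's own argument. It uses the same two cases: the non-trivial case, bounded by Hilton--Milner for $(k-1)$-sets on $n-1$ points, and the case of a star centered outside $P$. It also uses the same estimate $|\hf|\le|\ha_P|+|\hb_P|+|\hf(\bar y)|$ with $|\hf(\bar y)|<\binom{n-4}{k-3}$, the same appeal to \eqref{ineq-6.4}, and makes explicit the step $\binom{n-k-2}{k-2}\le\binom{n-k-1}{k-2}$ that the paper uses silently in the non-trivial case.
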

\begin{proof}
By Lemma \ref{lem-6.1}, $\ha_P$ is intersecting. If $\ha_P$ is non-trival intersecting, then
\[
|\ha_P| \leq \binom{n-2}{k-2}-\binom{n-k-1}{k-2}+1\overset{\eqref{ineq-6.4}}{<} \binom{n-2}{k-2}-\binom{n-4}{k-3}+1.
\]
Hence,
\[
|\hf| \leq |\ha_P|+|\hb_P|+|\hf(\bar{y})|<\binom{n-k-1}{k-1}+\binom{n-2}{k-2}+1
\]
and  we are done.

If $\ha_P$ is a star with center in $[n]\setminus (P\cup \{y\})$, then
\[
|\ha_P| \leq \binom{n-2}{k-2}-\binom{n-k-2}{k-2}\overset{\eqref{ineq-6.4}}{<} \binom{n-2}{k-2}-\binom{n-4}{k-3}.
\]
It follows that
\[
|\hf| \leq |\ha_P|+|\hb_P|+|\hf(\bar{y})|<\binom{n-k-1}{k-1}+\binom{n-2}{k-2}+1
\]
and we are done again. Thus the claim holds.
\end{proof}

Let $z_P\in P$ be a center of $\ha_P$ for any $P\in \hf(\bar{y})$. By Corollary \ref{cor-6.1} we may assume $|\hf(\bar{y})|\geq 2$. Let  $P,Q\in \hf(\bar{y})$. Note that
\[
\hf(y) \subset \hs_P:=\left\{E\subset \binom{[n]\setminus \{y\}}{k-1}\colon \mbox{ either }z_P\in E \mbox{ or } E\cap P=\emptyset \right\}
\]
and
\[
|\hs_P|=\binom{n-2}{k-2}+\binom{n-k-1}{k-1}.
\]
Then $\hf(y)\subset \hs_P\cap \hs_Q$. If $|\hs_P\setminus \hs_Q|\geq \binom{n-4}{k-3}$, then
\[
|\hf| \leq |\hs_P\cap \hs_Q|+|\hf(\bar{y})|< |\hs_P|-|\hs_P\setminus \hs_Q|+\binom{n-4}{k-3}\leq \binom{n-2}{k-2}+\binom{n-k-1}{k-1}.
\]
Thus we may assume $|\hs_P\setminus \hs_Q|< \binom{n-4}{k-3}$ and $|\hs_Q\setminus \hs_P|< \binom{n-4}{k-3}$.

If $z_P\notin Q$, let $z\in Q\setminus \{z_Q\}$, then  all the $(k-1)$-sets $E\subset [n]\setminus \{y\}$  with $E\cap \{z_P,z_Q,z\}=\{z_P,z\}$ are in $\hs_P\setminus\hs_Q$. It follows that
$|\hs_P\setminus\hs_Q|\geq \binom{n-4}{k-3}$, a contradiction. By symmetry, we may assume $z_P,z_Q\in P\cap Q$.

Since $P\neq Q$, by symmetry we may assume $z\in P\setminus Q$. Then all $(k-1)$-sets $E\subset [n]\setminus \{y\}$ with $z\in E$ and $E\cap Q=\emptyset$ are all in $\hs_Q\setminus\hs_P$. It follows that
$|\hs_Q\setminus\hs_P|\geq \binom{n-k-2}{k-2}\overset{\eqref{ineq-6.4}}{>}  \binom{n-4}{k-3}$, a contradiction again.
\end{proof}

\section{Proof of Lemma \ref{lem-3.1} and Lemma \ref{lem-6.1}}

\begin{proof}[Proof of Lemma \ref{lem-3.1}]
Note that
$$t=\left\{
                \begin{array}{ll}
                  k+2, & n\geq 2k+5; \\[3pt]
                  k+1, & n=2k+3, 2k+4; \\[3pt]
                  k, & n=2k+1, 2k+2.
                \end{array}
              \right.
$$
Let us first prove \eqref{eq:pi-concentration}  for $n\geq 2k+5$. Let
$m=n-k-4\geq k+1$. Note that
\begin{align*}
\binom{n-2}{k-2}&=\sum_{0\leq j\leq k-2} \binom{k+2}{k-2-j}\binom{m}{j},\\[3pt]
\binom{n-k-1}{k-1}&= \binom{m}{k-1}+3\binom{m}{k-2}+3\binom{m}{k-3}+\binom{m}{k-4}\\[3pt]
\binom{n}{k-1}&=\sum_{0\leq j\leq k-1} \binom{k+4}{k-1-j}\binom{m}{j},\\[3pt]
(n-1)\binom{n-k-4}{k-2} &=(k-1)\binom{m}{k-1}+(2k+1)\binom{m}{k-2}\\[3pt]
(n-1)\binom{k+1}{k-2} &= \binom{k+1}{3} \binom{m}{1}+(k+3)\binom{k+1}{3}.
\end{align*}
Then the left side
of \eqref{eq:pi-concentration} minus
the right side has a Vandermonde expansion
\begin{equation}
 \sum_{j=0}^{k-1}a_j\binom mj.
 \label{eq:pi-vandermonde}
\end{equation}
For $2\le j\le k-3$, writing $s=k-1-j$, one obtains
\begin{equation}
 a_j=k\binom3s+k\binom{k+2}{s-1}-\binom{k+4}s>0,
 \label{eq:pi-coeff-middle}
\end{equation}
which follows from
\[
 \frac{\binom{k+4}s}{\binom{k+2}{s-1}}
 =\frac{(k+4)(k+3)}{s(k+4-s)}<k.
\]
The two leading coefficients are
\[
 a_{k-1}=0,\qquad a_{k-2}=k-5\geq 0.
\]
The remaining coefficients are
\begin{align*}
 a_1&=k\binom{k+2}{5}-\binom{k+4}{6}-\binom{k+1}{3}\nonumber\\
 &=\frac{k(k-1)(k+1)}{720}
 (5k^3-9k^2-50k-144)>0 \mbox{ for }k\geq 5,\\
 a_0&=k\binom{k+2}{4}-\binom{k+4}{5}
 -(k+3)\binom{k+1}{3}+k\nonumber\\
 &=\frac{k}{30}(k^4-5k^3-25k^2-10k+39)>0\mbox{ for }k\ge9.
\end{align*}
Since $m=n-k-4\geq k+1$, one can also check that
\[
a_0+a_1 m \geq  a_0+a_1 (k+1)>0 \mbox{ holds for }k=6,7,8.
\]
For $k=5$, \eqref{eq:pi-concentration} is equivalent to
\[
 7 n^2 - 127 n + 470>0,
\]
which is true for $n\geq 13$.  This proves \eqref{eq:pi-concentration} for $n\ge2k+5$.

For $n=2k+s$ with $s=1,2,3,4$,  $t=k-1+\lceil s/2\rceil$.
The left hand of \eqref{eq:pi-concentration} equals
\begin{align*}
&k\left(\binom{2k+s-2}{k-2}+\binom{k+s-1}{s}+1\right)
-\binom{2k+s}{k-1}\\[3pt]
&\qquad = \left(k-\frac{(2k+s)(2k+s-1)}{(k-1)(k+s+1)}\right)\binom{2k+s-2}{k-2}+k\binom{k+s-1}{s}+k.
\end{align*}
Let $\rho_s=\frac{(2k+s)(2k+s-1)}{(k-1)(k+s+1)}$.
Then \eqref{eq:pi-concentration} is equivalent to
\begin{align}\label{ineq-4.3}
(k-\rho_s)\binom{2k+s-2}{k-2}&+k\binom{k+s-1}{s}+k\nonumber\\[3pt]
-(2k+s-1)&\left(\binom{k-1+\lfloor s/2\rfloor}{\lfloor s/2\rfloor+1}
+\binom{k-2+\lceil s/2\rceil}{\lceil s/2\rceil}\right) >0.
\end{align}
For $k=5,6,7,8$ and $s=1,2,3,4$, one can check directly that \eqref{ineq-4.3} holds. Thus we may assume $k\geq 9$.
Let
\[
P_s:= k\binom{k+s-1}{s}+k
-(2k+s-1)\left(\binom{k-1+\lfloor s/2\rfloor}{\lfloor s/2\rfloor+1}
+\binom{k-2+\lceil s/2\rceil}{\lceil s/2\rceil}\right).
\]
By simplifying,
\begin{align*}
 P_1&=-3k^2+5k,\\
 P_2&=\frac{-k^3-2k^2+5k+2}{2},\\
 P_3&=\frac{k(k^3-9k^2+2k+18)}6>0, \mbox{ for }k\geq 9,\\
 P_4&=\frac{k(k^4-2k^3-25k^2+2k+72)}{24}>0\mbox{ for }k\geq 5.
\end{align*}
For $k\ge 9$, $\rho_1,\rho_2\le 4$. It implies that
\[
(k-\rho_s)\binom{2k+s-2}{k-2}\geq (k-4)\binom{2k+s-2}{k-2}\geq  (k-4)\binom{2k+s-2}{2} \mbox{ for }s=1,2.
\]
Since
\begin{align*}
(k-4)\binom{2k+s-2}{2}+P_s >0\mbox{ for }k\geq 9,
\end{align*}
we conclude that \eqref{eq:pi-concentration} holds for all $n\ge2k+1$.

Next we prove \eqref{eq:pi-rigidity}. If $n\geq 2k+5$, then $n-k-4\geq k+1$. Hence
\[
\binom{n-k-4}{k-3}\geq \binom{k+1}{k-3}=\binom{k+1}{4}\ge \binom{k+1}{2} \mbox{ for }k\geq 5.
\]
For $n=2k+3$ or $2k+4$ then $t=\lfloor \frac{n-1}{2}\rfloor=k+1$. Now
\[
\binom{(2k+4)-k-4}{k-3} >\binom{(2k+3)-k-4}{k-3}=\binom{k-1}{2}>k=\binom{t-1}{k-1} \mbox{ for }k\geq 5.
\]
For $n=2k+1$ or $2k+2$, then $t=\lfloor \frac{n-1}{2}\rfloor=k$. Now
\[
\binom{(2k+2)-k-4}{k-3} >\binom{(2k+1)-k-4}{k-3}=\binom{k-3}{k-3}=1=\binom{t-1}{k-1} \mbox{ for }k\geq 3.
\]

Thirdly we prove \eqref{eq:pi-global-gap}. The equivalent form is
\begin{align}\label{ineq-6.5}
\binom{n-2}{k-2}-\binom{n-k-2}{k-2}-n>\binom{t-1}{k-1}.
\end{align}
Let $g(n):=\binom{n-2}{k-2}-\binom{n-k-2}{k-2}-n$. Note that
\[
g(n+1)-g(n) =\binom{n-2}{k-3}-\binom{n-k-2}{k-3}-1>0 \mbox{ for }k\geq 4.
\]

Let $n\geq 2k+5$. Then the right hand side of \eqref{ineq-6.5} is $\binom{k+1}{k-1}=\binom{k+1}{2}$, independent of $n$. As $g(n)$ is monotone increasing in $n$, it is sufficient to prove
\[
g(2k+3)=\binom{2k+3}{k-2}-\binom{k+3}{k-2}-2k-5>\binom{k+1}{2}.
\]
Noting $\binom{2k+3}{k-2}-\binom{k+3}{k-2} > \binom{2k+2}{k-3}>\binom{2k+2}{2}$ for $k\geq 5$, we have
\[
g(2k+3)>\binom{2k+2}{2} -2k-5>\binom{k+1}{2},
\]
which is true for $k\geq 2$.
For $n=2k+1$ or $2k+2$, the right hand side of \eqref{ineq-6.5} is $1$. Using the monotonicity of $g(n)$, it is sufficient to consider the $n=2k+1$ case:
\[
g(2k+1)=\binom{2k-1}{k-2}-\binom{k-1}{k-2}-2k-1>1.
\]
Equivalently,
\[
\binom{2k-1}{k-2}>3k+1.
\]
As $\binom{2k-1}{k-2}\geq \binom{2k-1}{3}$ for $k\geq 5$ and $\binom{2k-1}{3} >3k+1$ for $k\geq 4$, \eqref{eq:pi-global-gap} holds.

Similarly for $2k+3\leq n\leq 2k+4$, it is sufficient to check the $n=2k+3$ case. In equivalent form,
\[
\binom{2k+1}{k-2}-\binom{k+1}{k-2}>3k+3.
\]
Now for $k\geq 5$,
\[
\binom{2k+1}{k-2}-\binom{k+1}{k-2}\geq \binom{2k}{k-3}\geq \binom{2k}{2}=k(2k-1)>3k+3,
\]
completing the proof.
\end{proof}

\begin{proof}[Proof of Lemma \ref{lem-6.1}]
First we prove \eqref{ineq-6.1}.
By \eqref{ineq-key0} we have the following two inequalities:
\begin{align*}
\binom{n-k-5}{k-2}&>\frac{n-2-(k+3)(k-2)}{n-2} \binom{n-2}{k-2},\\[3pt] \binom{n-k-1}{k-2}&>\frac{n-2-(k-1)(k-2)}{n-2} \binom{n-2}{k-2}.
\end{align*}
Summing them yields
\[
\binom{n-k-5}{k-2}+\binom{n-k-1}{k-2} >\left(2-\frac{2(k+1)(k-2)}{n-2}\right)\binom{n-2}{k-2}>\frac{5}{4}\binom{n-2}{k-2}> \binom{n-2}{k-2}+1.
\]
Thus \eqref{ineq-6.1} holds.

Next we prove \eqref{ineq-6.2}. Since $\binom{n-k-2}{k-2}=\frac{n-k-2}{k-2}\binom{n-k-3}{k-3}$ and
\[
\frac{n-k-2}{k-2} \ge \frac{n-k^2-k+4}{k-2}=\frac{n-2}{k-2}-k-3,
\]
so it is enough to prove
\begin{align}\label{ineq-7.1}
\binom{n-k-3}{k-3}\ge \binom{k^2+3k-10}{k-1}.
\end{align}
Now
\[
\frac{\binom{n-k-3}{k-3}}{\binom{k^2+3k-10}{k-1}}
=
\frac{(k-1)(k-2)}{(k^2+2k-9)(k^2+2k-8)}
\prod_{i=0}^{k-4}\frac{n-k-3-i}{k^2+3k-10-i}.
\]
For \(0\le i\le k-4\),
\[
\frac{n-k-3-i}{k^2+3k-10-i}
\ge
\frac{n-k-3}{k^2+3k-10}
>
\frac{12k^2-k-3}{k^2+3k-10}
>9,
\]
(using \(n>12k^2\) and \(k\ge7\)). Therefore,
\[
\frac{\binom{n-k-3}{k-3}}{\binom{k^2+3k-10}{k-1}}
>
\frac{(k-1)(k-2)}{(k^2+2k-9)(k^2+2k-8)}\,9^{k-3}
>
\frac{1}{8k^2}\,9^{k-3}.
\]
For \(k\ge7\), \(\frac{1}{8k^2}\,9^{k-3}\ge \frac{9^4}{8\times 7^2}>1\).
Thus \eqref{ineq-7.1} holds and \eqref{ineq-6.2} follows.

Thirdly  we prove \eqref{ineq-6.3}. Note that
\[
\frac{\binom{n-4}{k-3}}{\binom{k^2+2k-4}{k-1}}
=
\frac{(k-1)(k-2)}{(k^2+2k-4)(k^2+2k-5)}\prod_{i=0}^{k-4}\frac{n-4-i}{k^2+2k-6-i}.
\]
For \(k\ge 7\) and \(n>12k^2\), each term in the product satisfies
\[
\frac{n-4-i}{k^2+2k-6-i}\ge
\frac{n-4}{k^2+2k-6}>10.
\]
Hence
\[
\frac{\binom{n-4}{k-3}}{\binom{k^2+2k-4}{k-1}}
>
\frac{(k-1)(k-2)}{(k^2+2k-4)(k^2+2k-5)(k^2+2k-6)}\,10^{k-4}(n-4).
\]
Using
\[
\frac{(k-1)(k-2)}{(k^2+2k-4)(k^2+2k-5)(k^2+2k-6)}
\ge\frac{1}{4k^4} \mbox{ for }k\geq 7,
\]
we get
\[
\frac{\binom{n-4}{k-3}}{\binom{k^2+2k-4}{k-1}}
>\frac{10^{k-4}}{4k^4}(n-4).
\]
For \(k\ge 8\), \( \dfrac{10^{k-4}}{4 k^4}>\dfrac1{k-1}\), and since \(n-k^2<n-4\), it follows that
\[
\binom{n-4}{k-3}
>\frac{n-k^2}{k-1}\binom{k^2+2k-4}{k-1}.
\]
Thus \eqref{ineq-6.3} holds for $n\geq 12k^2$, $k\geq 8$.
For $k=7$, \eqref{ineq-6.3} is equivalent to
\[
\binom{n-4}{4}
>\frac{n-49}{6}\binom{59}{6},
\]
which is true when  $n\geq 556$. Thus \eqref{ineq-6.3} holds.

Finally, let us prove \eqref{ineq-6.4} for $n\geq k^2$ and $k\geq 3$. Note first
\[
\frac{\binom{n-k-2}{k-2}}{\binom{n-k-3}{k-3}} =\frac{n-k-2}{k-2} \geq \frac{k^2-k-2}{k-2}=k+1.
\]
To conclude the proof we show
\[
\frac{\binom{n-4}{k-3}}{\binom{n-k-3}{k-3}}<e.
\]
Equivalently,
\begin{align}\label{ineq-7.4}
\prod_{0\leq i\leq k-4} \frac{n-4-i}{n-k-3-i}<e.
\end{align}
Noting $n-2k+1\geq (k-1)^2$, each term in the product is at most
\[
1+\frac{k-1}{n-2k+1}\leq 1+\frac{1}{k-1}.
\]
Hence the left hand side of \eqref{ineq-7.4} is at most $(1+\frac{1}{k-1})^{k-3}<e$, as desired. Thus \eqref{ineq-6.4} holds.
\end{proof}

\section{Concluding remarks}

Let us close this paper with a conjecture.

\begin{conj}
Suppose that $\mathcal{F} \subset \binom{[n]}{k}$
is non-trivial and triangle-free. Then for \(n \geq 3k-3 \geq 6\),
\[
    |\mathcal{F}| \leq |\mathcal{T}|.
\]
\end{conj}

\vspace{8pt}
{\noindent \bf Acknowledgement.} The second author was supported by the
National Natural Science Foundation of China Grant no. 12471316 and the Fundamental Research Funds for
 the Central Universities.


\begin{thebibliography}{10}

\bibitem{BD}
B. Bollob\'{a}s and P. Duchet, Helly Families of Maximal Size, J. Combin. Theory Ser. A, 26 (1979), 197--200.

\bibitem{CambieSalia}
S.~Cambie and N.~Salia, Set systems without a simplex, Helly hypergraphs and
union-efficient families, arXiv:2210.16211, 2022.

\bibitem{E}
P. Erd\H{o}s: Topics in combinatorial analysis. Proc. Second Louisiana Conf. on Comb.,
Graph Theory and Computing (R. C. Mullin et al., Eds.) pp. 2--20, Louisiana State
Univ., Baton Rouge, 1971.

\bibitem{EKR}
P. Erd\H{o}s, C. Ko, R. Rado, Intersection theorems for systems of finite sets, Quart. J. Math. Oxford Ser. 12 (1961), 313--320.

\bibitem{F76}
P. Frankl, On Sperner families satisfying an additional condition, J. Combin. Theory Ser. A 20 (1976), 1--11.

\bibitem{F81}
P. Frankl, On a problem of Chv\'{a}tal and Erd\H{o}s on hypergraphs containing no generalized
simplex, J. Combin. Theory Ser. A 30 (1981), 169--182.

\bibitem{F87}
P. Frankl, The shifting technique in extremal set theory, Surveys in Combinatorics  123 (1987), 81--110.

\bibitem{FW22}
P. Frankl, J. Wang, A product version of the Hilton-Milner-Frankl theorem, Sci. China Math., 67 (2024), 455--474.


\bibitem{HM67}
A.J.W. Hilton, E.C. Milner, Some intersection theorems for systems of finite sets, Quart.J. Math. Oxford Ser. 18 (1967), 369--384.


\bibitem{MV}
D. Mubayi and J. Verstra\"{e}te, Proof of a conjecture of Erd\H{o}s on triangles in set
systems, Combinatorica 25 (2005), 599--614.

\bibitem{Tuza94}
Zs.~Tuza, Largest size and union of Helly families,
Discrete Math. 127 (1994), 319--327.

\end{thebibliography}
\end{document}